\title{The homotopy theory of  Khovanov homology}
\author{Brent Everitt}
\address{Department of Mathematics\\
University of York\\
York\\
YO10 5DD\\
United Kingdom.}
\email{brent.everitt@york.ac.uk}
\urladdr{}
\author{Paul Turner}
\address{Section de math\'ematiques\\
Universit\'e de Gen\`eve\\
2-4 rue du Li\`evre\\
CH-1211\\
Geneva\\
Switzerland.}
\email{prt.maths@gmail.com}
\urladdr{}
\newtheorem{theorem}{Theorem}[section]    
\newtheorem{proposition}{Proposition}[section]    
\newtheorem{corollary}{Corollary}[section]    
\newtheorem{lemma}[theorem]{Lemma}          
\newtheorem*{firstdevice}{presheaf computational tool.}
\newtheorem*{seconddevice}{homotopy computational tool.}
\newtheorem*{firstresult}{Theorem \ref{thm:invlim}.}
\newtheorem*{secondresult}{Proposition \ref{prop:homotopygroups}.}
\theoremstyle{definition}
\newtheorem{definition}[theorem]{Definition}    
\newtheorem*{remark}{Remark}             
   \let\temp\relax
   \let\temp 
 \chardef\EPSFCatAt\the\catcode`\@
 \chardef\C@tColon\the\catcode`\:
 \chardef\C@tSemicolon\the\catcode`\;
 \chardef\C@tQmark\the\catcode`\?
 \chardef\C@tEmark\the\catcode`\!
 \chardef\C@tDqt\the\catcode`\"
 \def\PunctOther@{\catcode`\:=12
   \catcode`\;=12 \catcode`\?=12 \catcode`\!=12 \catcode`\"=12}
 \let\wlog@ld\wlog 
 \def\wlog#1{\relax} 
 \newdimen\XShift@ \newdimen\YShift@ 
 \newtoks\Realtoks
 \newdimen\Wd@ \newdimen\Ht@
 \newdimen\Wd@@ \newdimen\Ht@@
 \newdimen\TT@
 \newdimen\LT@
 \newdimen\BT@
 \newdimen\RT@
 \newdimen\XSlide@ \newdimen\YSlide@ 
 \newdimen\TheScale  
 \newdimen\FigScale  
 \newdimen\ForcedDim@@
 \newtoks\EPSFDirectorytoks@
 \newtoks\EPSFNametoks@
 \newtoks\BdBoxtoks@
 \newtoks\LLXtoks@  
 \newtoks\LLYtoks@
 \newif\ifNotIn@
 \newif\ifForcedDim@
 \newif\ifForceOn@
 \newif\ifForcedHeight@
 \newif\ifPSOrigin
 \newread\EPSFile@ 
  \def\ms@g{\immediate\write16}
 \newif\ifIN@\def\IN@{\expandafter\INN@\expandafter}
  \long\def\INN@0#1@#2@{\long\def\NI@##1#1##2##3\ENDNI@
    {\ifx\m@rker##2\IN@false\else\IN@true\fi}%
     \expandafter\NI@#2@@#1\m@rker\ENDNI@}
  \def\m@rker{\m@@rker}
  \newtoks\Initialtoks@  \newtoks\Terminaltoks@
  \def\SPLIT@{\expandafter\SPLITT@\expandafter}
  \def\SPLITT@0#1@#2@{\def\TTILPS@##1#1##2@{%
     \Initialtoks@{##1}\Terminaltoks@{##2}}\expandafter\TTILPS@#2@}
  \newtoks\Trimtoks@
 \def\ForeTrim@{\expandafter\ForeTrim@@\expandafter}
 \def\ForePrim@0 #1@{\Trimtoks@{#1}}
 \def\ForeTrim@@0#1@{\IN@0\m@rker. @\m@rker.#1@%
     \ifIN@\ForePrim@0#1@%
     \else\Trimtoks@\expandafter{#1}\fi}
  \def\Trim@0#1@{%
      \ForeTrim@0#1@%
      \IN@0 @\the\Trimtoks@ @%
        \ifIN@ 
             \SPLIT@0 @\the\Trimtoks@ @\Trimtoks@\Initialtoks@
             \IN@0\the\Terminaltoks@ @ @%
                 \ifIN@
                 \else \Trimtoks@ {FigNameWithSpace}%
                 \fi
        \fi
      }
   \newtoks\pt@ks
   \def\getpt@ks 0.0#1@{\pt@ks{#1}}
  \newtoks\Realtoks
  \def\Real#1{%
    \dimen2=#1%
      \SPLIT@0\the\pt@ks @\the\dimen2@
       \Realtoks=\Initialtoks@
            }
   \newdimen\Product
   \def\Mult#1#2{%
     \dimen4=#1\relax
     \dimen6=#2%
     \Real{\dimen4}%
     \Product=\the\Realtoks\dimen6%
        }
 \newdimen\Inverse
 \newdimen\hmxdim@ \hmxdim@=8192pt
 \def\Invert#1{%
  \Inverse=\hmxdim@
  \dimen0=#1%
  \divide\Inverse \dimen0%
  \multiply\Inverse 8}
   \def\Rescale#1#2#3{
              \divide #1 by 100\relax
              \dimen2=#3\divide\dimen2 by 100 \Invert{\dimen2}%
              \Mult{#1}{#2}%
              \Mult\Product\Inverse 
              #1=\Product}
  \def\Scale#1{\dimen0=\TheScale %
      \divide #1 by  1280 
      \divide \dimen0 by 5120 %
      \multiply#1 by \dimen0 
      \divide#1 by 10   
     }
 \newbox\scrunchbox
 \def\Scrunched#1{{\setbox\scrunchbox\hbox{#1}%
   \wd\scrunchbox=0pt
   \ht\scrunchbox=0pt
   \dp\scrunchbox=0pt
   \box\scrunchbox}}
 \def\Shifted@#1{%
   \vbox {\kern-\YShift@
       \hbox {\kern\XShift@\hbox{#1}\kern-\XShift@}%
           \kern\YShift@}}
 \def\cBoxedEPSF#1{{\leavevmode 
   \ReadNameAndScale@{#1}%
   \SetEPSFSpec@
   \ReadEPSFile@ \ReadBdB@x  
     \TrimFigDims@ 
     \CalculateFigScale@  
     \ScaleFigDims@
     \SetInkShift@
   \hbox{$\mathsurround=0pt\relax
         \vcenter{\hbox{%
             \FrameSpider{\hskip-.4pt\vrule}%
             \vbox to \Ht@{\offinterlineskip\parindent=\z@%
                \FrameSpider{\vskip-.4pt\hrule}\vfil 
                \hbox to \Wd@{\hfil}%
                \vfil
                \InkShift@{\EPSFSpecial{\EPSFSpec@}{\FigSc@leReal}}%
             \FrameSpider{\hrule\vskip-.4pt}}%
         \FrameSpider{\vrule\hskip-.4pt}}}%
     $\relax}%
    \CleanRegisters@ 
    \ms@g{ *** Box composed for the %
         EPS file \the\EPSFNametoks@}%
    }}
 \def\tBoxedEPSF#1{\setbox4\hbox{\cBoxedEPSF{#1}}%
     \setbox4\hbox{\raise -\ht4 \hbox{\box4}}%
     \box4
      }
 \def\bBoxedEPSF#1{\setbox4\hbox{\cBoxedEPSF{#1}}%
     \setbox4\hbox{\raise \dp4 \hbox{\box4}}%
     \box4
      }
  \let\BoxedEPSF\cBoxedEPSF
   \let\BoxedArt\BoxedEPSF
  \def\gLinefigure[#1scaled#2]_#3{%
        \BoxedEPSF{#3 scaled #2}}
  \def\EPSFxsize{\afterassignment\ForceW@\ForcedDim@@}
      \def\ForceW@{\ForcedDim@true\ForcedHeight@false}
  \def\EPSFysize{\afterassignment\ForceH@\ForcedDim@@}
      \def\ForceH@{\ForcedDim@true\ForcedHeight@true}
  \def\EmulateRokicki{%
       \let\epsfbox\bBoxedEPSF \let\epsffile\bBoxedEPSF
       \let\epsfxsize\EPSFxsize \let\epsfysize\EPSFysize} 
 \def\ReadNameAndScale@#1{\IN@0 scaled@#1@
   \ifIN@\ReadNameAndScale@@0#1@%
   \else \ReadNameAndScale@@0#1 scaled\DefaultMilScale @%
   \fi}
 \def\ReadNameAndScale@@0#1scaled#2@{
    \let\OldBackslash@\\%
    \def\\{\OtherB@ckslash}%
    \edef\temp@{#1}%
    \Trim@0\temp@ @%
    \EPSFNametoks@\expandafter{\the\Trimtoks@ }%
    \FigScale=#2 pt%
    \let\\\OldBackslash@
    }
 \def\SetDefaultEPSFScale#1{%
      \global\def\DefaultMilScale{#1}}
 \def \SetBogusBbox@{%
     \global\BdBoxtoks@{ BoundingBox:0 0 100 100 }%
     \global\def\BdBoxLine@{ BoundingBox:0 0 100 100 }%
     \ms@g{ !!! Will use placeholder !!!}%
     }
\gdef\P@S@{

 \def\ReadEPSFile@{
     \openin\EPSFile@\EPSFSpec@
     \relax  
  \ifeof\EPSFile@
     \ms@g{}%
     \ms@g{ !!! EPS FILE \the\EPSFDirectorytoks@
       \the\EPSFNametoks@\space WAS NOT FOUND !!!}%
     \SetBogusBbox@
  \else
   \begingroup
   \catcode`\%=12\catcode`\:=12\catcode`\!=12
   \catcode"00=14 \catcode"7F=14 \catcode`\\=14 
   \global\read\EPSFile@ to \BdBoxLine@ 
   \IN@0\P@S@ @\BdBoxLine@ @%
   \ifIN@ 
     \NotIn@true
     \loop   
       \ifeof\EPSFile@\NotIn@false 
         \ms@g{}%
         \ms@g{ !!! BoundingBox NOT FOUND IN %
            \the\EPSFDirectorytoks@\the\EPSFNametoks@\space!!! }%
         \SetBogusBbox@
       \else\global\read\EPSFile@ to \BdBoxLine@
       \fi
       \global\BdBoxtoks@\expandafter{\BdBoxLine@}%
       \IN@0BoundingBox:@\the\BdBoxtoks@ @%
       \ifIN@\NotIn@false\fi%
     \ifNotIn@
     \repeat
   \else
         \ms@g{}%
         \ms@g{ !!! \the\EPSFNametoks@\space is not PostScript.}%
         \ms@g{ !!! It should begin with the "\P@S@". }%
         \ms@g{ !!! Also, all other header lines until }%
         \ms@g{ !!!  "\pct@@ EndComments"  should begin with "\pct@@". }%
         \SetBogusBbox@
   \fi
  \endgroup\relax
  \fi
  \closein\EPSFile@ 
   }

  \def\ReadBdB@x{
   \expandafter\ReadBdB@x@\the\BdBoxtoks@ @}
  
  \def\ReadBdB@x@#1BoundingBox:#2@{
    \ForeTrim@0#2@%
    \IN@0atend@\the\Trimtoks@ @%
       \ifIN@\Trimtoks@={0 0 100 100 }%
         \ms@g{}%
         \ms@g{ !!! BoundingBox not found in %
         \the\EPSFDirectorytoks@\the\EPSFNametoks@\space !!!}%
         \ms@g{ !!! It must not be at end of EPSF !!!}%
         \ms@g{ !!! Will use placeholder !!!}%
       \fi
    \expandafter\ReadBdB@x@@\the\Trimtoks@ @%
   }
    
  \def\ReadBdB@x@@#1 #2 #3 #4@{
      \Wd@=#3bp\advance\Wd@ by -#1bp%
      \Ht@=#4bp\advance\Ht@ by-#2bp%
       \Wd@@=\Wd@ \Ht@@=\Ht@ 
       \LLXtoks@={#1}\LLYtoks@={#2}
      \ifPSOrigin\XShift@=-#1bp\YShift@=-#2bp\fi 
     }

   %
   \def\G@bbl@#1{}
   \bgroup
     \global\edef\OtherB@ckslash{\expandafter\G@bbl@\string\\}
   \egroup

  \def\SetEPSFDirectory{
           \bgroup\PunctOther@\relax
           \let\\\OtherB@ckslash
           \SetEPSFDirectory@}

 \def\SetEPSFDirectory@#1{
    \edef\temp@{#1}%
    \Trim@0\temp@ @
    \global\toks1\expandafter{\the\Trimtoks@ }\relax
    \egroup
    \EPSFDirectorytoks@=\toks1
    }

 \def\SetEPSFSpec@{%
     \bgroup
     \let\\=\OtherB@ckslash
     \global\edef\EPSFSpec@{%
        \the\EPSFDirectorytoks@\the\EPSFNametoks@}%
     \global\edef\EPSFSpec@{\EPSFSpec@}%
     \egroup}

  %
 \def\TrimTop#1{\advance\TT@ by #1}
 \def\TrimLeft#1{\advance\LT@ by #1}
 \def\TrimBottom#1{\advance\BT@ by #1}
 \def\TrimRight#1{\advance\RT@ by #1}

 \def\TrimBoundingBox#1{%
   \TrimTop{#1}%
   \TrimLeft{#1}%
   \TrimBottom{#1}%
   \TrimRight{#1}%
       }

 \def\TrimFigDims@{%
    \advance\Wd@ by -\LT@ 
    \advance\Wd@ by -\RT@ \RT@=\z@
    \advance\Ht@ by -\TT@ \TT@=\z@
    \advance\Ht@ by -\BT@ 
    }

  %
  \def\ForceWidth#1{\ForcedDim@true
       \ForcedDim@@#1\ForcedHeight@false}
  
  \def\ForceHeight#1{\ForcedDim@true
       \ForcedDim@@=#1\ForcedHeight@true}

  \def\ForceOn{\ForceOn@true}
  \def\ForceOff{\ForceOn@false\ForcedDim@false}
  
  \def\CalculateFigScale@{%
     \ifForcedDim@\FigScale=1000pt
           \ifForcedHeight@
                \Rescale\FigScale\ForcedDim@@\Ht@
           \else
                \Rescale\FigScale\ForcedDim@@\Wd@
           \fi
     \fi
     \Real{\FigScale}%
     \edef\FigSc@leReal{\the\Realtoks}%
     }
   
  \def\ScaleFigDims@{\TheScale=\FigScale
      \ifForcedDim@
           \ifForcedHeight@ \Ht@=\ForcedDim@@  \Scale\Wd@
           \else \Wd@=\ForcedDim@@ \Scale\Ht@
           \fi
      \else \Scale\Wd@\Scale\Ht@        
      \fi
      \ifForceOn@\relax\else\global\ForcedDim@false\fi
      \Scale\LT@\Scale\BT@  
      \Scale\XShift@\Scale\YShift@
      }
      
 \def\HideDisplacementBoxes{\global\def\FrameSpider##1{\null}}
 \def\ShowDisplacementBoxes{\global\def\FrameSpider##1{##1}}
 \let\HideFigureFrames\HideDisplacementBoxes 
 \let\ShowFigureFrames\ShowDisplacementBoxes
 \ShowDisplacementBoxes
 
 \def\hSlide#1{\advance\XSlide@ by #1}
 \def\vSlide#1{\advance\YSlide@ by #1}
 
  \def\SetInkShift@{%
            \advance\XShift@ by -\LT@
            \advance\XShift@ by \XSlide@
            \advance\YShift@ by -\BT@
            \advance\YShift@ by -\YSlide@
             }
  \def\InkShift@#1{\Shifted@{\Scrunched{#1}}}
 
   %
  \def\CleanRegisters@{%
      \globaldefs=1\relax
        \XShift@=\z@\YShift@=\z@\XSlide@=\z@\YSlide@=\z@
        \TT@=\z@\LT@=\z@\BT@=\z@\RT@=\z@
      \globaldefs=0\relax}

 
 \def\SetTexturesEPSFSpecial{\PSOriginfalse
  \gdef\EPSFSpecial##1##2{\relax
    \edef\specialtemp{##2}%
    \SPLIT@0.@\specialtemp.@\relax
    \special{illustration ##1 scaled
                        \the\Initialtoks@}}}
 
  \def\SetUnixCoopEPSFSpecial{\PSOrigintrue 
   \gdef\EPSFSpecial##1##2{%
      \dimen4=##2pt
      \divide\dimen4 by 1000\relax
      \Real{\dimen4}
      \edef\Aux@{\the\Realtoks}%
      \includegraphics{##1\space}}}

  \def\SetBechtolsheimEPSFSpecial@{
   \PSOrigintrue
   \special{\DriverTag@ Include0 "psfig.pro"}%
   \gdef\EPSFSpecial##1##2{%
      \dimen4=##2pt 
      \divide\dimen4 by 1000\relax
      \Real{\dimen4} 
      \edef\Aux@{\the\Realtoks}
      \special{\DriverTag@ Literal "10 10 0 0 10 10 startTexFig
           \the\mag\space 1000 div 
           dup 3.25 neg mul 1 index .25 neg mul translate 
           \Aux@\space mul dup scale "}%
      \special{\DriverTag@ Include1 "##1"}%
      \special{\DriverTag@ Literal "endTexFig "}%
        }}

  \def\SetBechtolsheimDVITPSEPSFSpecial{\def\DriverTag@{dvitps: }%
      \SetBechtolsheimEPSFSpecial@}

  \def\SetBechtolsheimDVI2PSEPSFSSpecial{\def\DriverTag@{DVI2PS: }%
      \SetBechtolsheimEPSFSpecial@}

  \def\SetLisEPSFSpecial{\PSOrigintrue 
   \gdef\EPSFSpecial##1##2{%
      \dimen4=##2pt
      \divide\dimen4 by 1000\relax
      \Real{\dimen4}
      \edef\Aux@{\the\Realtoks}%
      \special{pstext="10 10 0 0 10 10 startTexFig\space
           \the\mag\space 1000 div \Aux@\space mul 
           \the\mag\space 1000 div \Aux@\space mul scale"}%
      \includegraphics{##1}%
      \special{pstext=endTexFig}%
        }}

  \def\SetRokickiEPSFSpecial{\PSOrigintrue 
   \gdef\EPSFSpecial##1##2{%
      \dimen4=##2pt
      \divide\dimen4 by 10\relax
      \Real{\dimen4}
      \edef\Aux@{\the\Realtoks}%
      \includegraphics{##1}}}

  \def\SetInlineRokickiEPSFSpecial{\PSOrigintrue 
   \gdef\EPSFSpecial##1##2{%
      \dimen4=##2pt
      \divide\dimen4 by 1000\relax
      \Real{\dimen4}
      \edef\Aux@{\the\Realtoks}%
      \special{ps::[begin] 10 10 0 0 10 10 startTexFig\space
           \the\mag\space 1000 div \Aux@\space mul 
           \the\mag\space 1000 div \Aux@\space mul scale}%
      \special{ps: plotfile ##1}%
      \special{ps::[end] endTexFig}%
        }}

 \def\SetOzTeXEPSFSpecial{\PSOrigintrue
 \gdef\EPSFSpecial##1##2{%
 \dimen4=##2pt
 \divide\dimen4 by 1000\relax
 \Real{\dimen4}
 \edef\Aux@{\the\Realtoks}
 \special{epsf=\string"##1\string"\space scale=\Aux@}%
 }} 

  \def\SetPSprintEPSFSpecial{\PSOriginFALSE 
   \gdef\EPSFSpecial##1##2{
     \special{##1\space 
       ##2 1000 div \the\mag\space 1000 div mul
       ##2 1000 div \the\mag\space 1000 div mul scale
       \the\LLXtoks@\space neg \the\LLYtoks@\space neg translate
       }}}

 \def\SetArborEPSFSpecial{\PSOriginfalse 
   \gdef\EPSFSpecial##1##2{%
     \edef\specialthis{##2}%
     \SPLIT@0.@\specialthis.@\relax 
     \special{ps: epsfile ##1\space \the\Initialtoks@}}}

 \def\SetClarkEPSFSpecial{\PSOriginfalse 
   \gdef\EPSFSpecial##1##2{%
     \Rescale {\Wd@@}{##2pt}{1000pt}%
     \Rescale {\Ht@@}{##2pt}{1000pt}%
     \special{dvitops: import 
           ##1\space\the\Wd@@\space\the\Ht@@}}}

  \let\SetDVIPSONEEPSFSpecial\SetUnixCoopEPSFSpecial
  \let\SetDVIPSoneEPSFSpecial\SetUnixCoopEPSFSpecial

  \def\SetBeebeEPSFSpecial{
   \PSOriginfalse%
   \gdef\EPSFSpecial##1##2{\relax
    \special{language "PS",
      literal "##2 1000 div ##2 1000 div scale",
      position = "bottom left",
      include "##1"}}}
  \let\SetDVIALWEPSFSpecial\SetBeebeEPSFSpecial

  \def\SetNorthlakeEPSFSpecial{\PSOrigintrue
   \gdef\EPSFSpecial##1##2{%
     \edef\specialthis{##2}%
     \SPLIT@0.@\specialthis.@\relax 
     \special{insert ##1,magnification=\the\Initialtoks@}}}

 \def\SetStandardEPSFSpecial{%
   \gdef\EPSFSpecial##1##2{%
     \ms@g{}
     \ms@g{%
       !!! Sorry! There is still no standard for \string%
       \special\space EPSF integration !!!}%
     \ms@g{%
      --- So you will have to identify your driver using a command}%
     \ms@g{%
      --- of the form \string\Set...EPSFSpecial, in order to get}%
     \ms@g{%
      --- your graphics to print.  See BoxedEPS.doc.}%
     \ms@g{}
     \gdef\EPSFSpecial####1####2{}
     }}

  \SetStandardEPSFSpecial 
 
 \let\wlog\wlog@ld 

 \catcode`\:=\C@tColon
 \catcode`\;=\C@tSemicolon
 \catcode`\?=\C@tQmark
 \catcode`\!=\C@tEmark
 \catcode`\"=\C@tDqt

 \catcode`\@=\EPSFCatAt

 %
 %
 %
 %
 %

\SetEPSFDirectory{} 
\HideDisplacementBoxes
\SetRokickiEPSFSpecial  
%
%
%
%
\input xy 
\xyoption{all} 
%
%
%
%
\newgray{lightergray}{0.85}
%
%
%
%
\newcommand{\bN}{\mathbb{N}} 
\newcommand{\bZ}{\mathbb{Z}} 
\newcommand{\bQ}{\mathbb{Q}} 
\newcommand{\bR}{\mathbb{R}} 
\newcommand{\bC}{\mathbb{C}} 
\newcommand{\bF}{\mathbb{F}} 
\newcommand{\bK}{\mathbb{K}} 
\newcommand{\bL}{\mathbb{L}} 
\newcommand{\bB}{\mathbf{B}} 
\newcommand{\cA}{\mathcal{A}} 
\newcommand{\cB}{\mathcal{B}} 
\newcommand{\cC}{\mathcal{C}} 
\newcommand{\cD}{\mathcal{D}} 
\newcommand{\cF}{\mathcal{F}} 
\newcommand{\cH}{\mathcal{H}} 
\newcommand{\cK}{\mathcal{K}} 
\newcommand{\cL}{\mathcal{L}} 
\newcommand{\cM}{\mathcal{M}} 
\newcommand{\cP}{\mathcal{P}} 
\newcommand{\cS}{\mathcal{S}} 
\newcommand{\cT}{\mathcal{T}} 
\newcommand{\cU}{\mathcal{U}} 
\newcommand{\cV}{\mathcal{V}} 
\DeclareMathOperator{\Map}{Map} 
\DeclareMathOperator{\Hom}{Hom} 
\DeclareMathOperator{\Ker}{Ker} 
\DeclareMathOperator{\Coker}{Coker} 
\DeclareMathOperator{\Image}{Im}
\DeclareMathOperator{\holim}{holim}  
\DeclareMathOperator{\hofibre}{hofibre}  
\DeclareMathOperator{\hof}{hof}  
\DeclareMathOperator{\fibre}{fibre}  
\newcommand{\bull}{$~$\\$\bullet \;\;$}
\newcommand{\comment}[1]{{\bf{({#1})}}}
\newcommand{\sidecomment}[1]{\marginlabel{\small{\red{$\blob$ {#1}}}}}
\newcommand{\ra}{\rightarrow} 
\newcommand{\op}{\text{op}}
\newcommand{\fkh}{\cF_{K\!H}}
\newcommand{\Hsheaf}[3]{\cH_{{#1}}({#2};{#3})}
\newcommand{\rHsheaf}[3]{\widetilde{\cH}_{{#1}}({#2};{#3})}
\newcommand{\Hcell}[3]{H^{cell}_{{#1}}({#2};{#3})}

\newcommand{\ul}[1]{\underline{#1}} 
\newcommand{\dprime}{{\prime\prime}} 
\newcommand{\gap}{\\ [1.5mm]} 

\newcommand{\ot}{\otimes} 
\newcommand{\ol}{\widetilde} 
\newcommand{\ob}{\overline} 
\newcommand{\bfx}{{\bf x}}
\newcommand{\bx}{{\bf x}}
\newcommand{\by}{{\bf y}}
\newcommand{\bz}{{\bf z}}
%
%

\newcommand{\cpr}{\cC\cP_R}
\newcommand{\bundr}{\cB und_R}
\newcommand{\bundrB}[1]{\cB und_R({#1})}
\newcommand{\spf}[1]{\cS_{#1}(P,\cF)}
\newcommand{\hpc}[1]{H_{#1}(P,\cF)}
\newcommand{\cpc}[1]{\cC_{#1}(P,\cF)}
\newcommand{\kbc}[1]{\cK_{#1}(\bB,\cF)}
\newcommand{\cbc}[1]{\cC_{#1}(\bB,\cF)}

\newcommand{\khoriginal}[2]{K\!H^{{#1},{#2}}}
\newcommand{\obkh}[2]{\ob{K\!H}_{{#1},{#2}}}
\newcommand{\grrmod}{Gr\cM\text{od}_R}
\newcommand{\chr}{\text{Ch}_R}
\newcommand{\obB}{\overline{\bB}}
\newcommand{\obc}[2]{\overline{\cC}_{{#1},{#2}}}
\newcommand{\obcoriginal}[2]{\overline{\cC}^{{#1},{#2}}}
\newcommand{\obcK}{\overline{\cK}}
\newcommand{\oE}{\overline{E}}

\newcommand{\tC}{\widetilde{\cC}}
\newcommand{\tK}{\widetilde{\cK}}

\newcommand{\bkh}[1]{\ob{K\!H}^{{#1}}}
\newcommand{\kh}[1]{K\!H^{{#1}}}

\newcommand{\BC}{{\bf C}} 
\newcommand{\BD}{{\bf D}} 
\newcommand{\BCop}{\BC^\op} 

\newcommand{\BQ}{{\bf Q}} 
\newcommand{\BQop}{\BQ^\op} 

\newcommand{\Qop}{Q^\op}
\newcommand{\Ab}{\cA b}

\newcommand{\presh}[1] {\text{\bf PreSh}({#1})}
\newcommand{\preshc} {\presh {\BC}}

\newcommand{\preshq}{\presh {\BQ}}
\newcommand{\invlim}{\varprojlim}
\newcommand{\corank}[1]{\text{corank}({#1})}
\newcommand{\spaces}{\text{\bf Sp}}
\newcommand{\ptspaces}{\text{\bf Sp}}

\newcommand{\diag}[1] {\text{\bf Diag}({#1})}
\newcommand{\diagc} {\spaces^{\BC}}
\newcommand{\ptdiag}[1] {\spaces^{#1}}
\newcommand{\ptdiagc} {\ptdiag{\BC}}
\newcommand{\ptdiagcop} {\ptdiag{\BC^\op}}

\def\XXX{{\bf X}}
\def\YYY{{\bf Y}}
\def\UUU{{\bf U}}
\newcommand{\XX}[1]{{\bf X}_* {#1}}
\newcommand{\XXn}[1]{{\bf X}_n {#1}}
\newcommand{\YY}[1]{{\bf Y}_* {#1}}
\newcommand{\YYn}[1]{{\bf Y}_n {#1}}

\newcommand{\diagX}{\mathscr X}
\newcommand{\diagY}{\mathscr Y}
\newcommand{\diagZ}{\mathscr Z}

\newcommand{\scrF}{\mathscr F}
\newcommand{\scrG}{\mathscr G}
\newcommand{\scrH}{\mathscr H}
\newcommand{\scrK}{\mathscr K}
\newcommand{\scrFn}{\mathscr F_n}
\newcommand{\scrGn}{\mathscr G_n}
\newcommand{\scrKn}{\mathscr K_n}
\newcommand{\scrHn}{\mathscr H_n}
%
%
%
%
\DeclareMathAlphabet{\ams}{U}{msb}{m}{n}
\DeclareMathAlphabet{\goth}{U}{euf}{m}{n}
\def\id{\text{id}}
\def\coker{\text{coker}\,}
\def\im{\text{im}\,}
\def\ker{\text{ker}\,}
\def\aut{\text{Aut}}
\def\isom{\text{Isom}\,}
\def\endo{\text{End}}
\def\sym{\text{Sym}}
\def\ov{\overline}
\def\tl{\tilde}
\def\wtl{\widetilde}
\def\wh{\widehat}
\def\supp{\text{supp}\,}
\def\rank{\text{rank}\,}
\def\dom{\text{dom}}
\def\reflec{\text{Reflec}}
\def\codim{\text{codim}\,}
\def\II{\mathscr I}
\def\NN{\mathscr N}
\def\BB{\mathscr B}
\def\FF{\mathcal F}
\def\EE{\mathcal E}
\def\BB{\mathcal B}
\def\AA{\mathcal A}
\def\CC{\mathcal C}
\def\OO{\mathcal O}
\def\JJ{\mathcal J}
\def\HH{\mathcal H}
\def\RR{\mathcal R}
\def\LL{\mathcal L}
\def\PP{\mathcal P}
\def\QQ{\mathcal Q}
\def\TT{\mathcal T}
\def\DD{\mathcal D}
\def\SS{\mathcal S}
\def\KK{\mathcal K}
\def\gS{\goth{S}}
\def\BBB{\goth{B}}
\def\aa{\alpha}
\def\ww{\omega}
\def\bb{\beta}
\def\ss{\sigma}
\def\vphi{\varphi}
\def\wvphi{\widehat{\varphi}}
\def\ll{\lambda}
\def\ve{\varepsilon}
\def\Om{\Omega}
\def\wh{\widehat}
\def\ch{\check}
\def\Z{\ams{Z}}\def\E{\ams{E}}
\def\H{\ams{H}}\def\R{\ams{R}}
\def\C{\ams{C}}\def\Q{\ams{Q}}
\def\F{\ams{F}}\def\K{\ams{K}}
\def\P{\ams{P}}\def\B{\ams{B}}
\def\O{\ams{O}}
\def\G{\ams{G}}
\def\M{\ams{M}}
\def\e{\mathbf{e}}
\def\w{\mathbf{w}}
\def\u{\mathbf{u}}
\def\x{\mathbf{x}}
\def\y{\mathbf{y}}
\def\z{\mathbf{z}}
\def\zhat{\hat{\mathbf{z}}}
\def\0{\mathbf{0}}
\def\1{\mathbf{1}}
\def\quo{/\kern -.45em\sim}
%
\newpsobject{showgrid}{psgrid}{subgriddiv=1,griddots=10,gridlabels=6pt,gridcolor=red}
%
\def\ds{\displaystyle}
\def\blob{\bullet}
\def\Langle{\langle\kern -2pt\langle}
\def\Rangle{\rangle\kern -1.9pt\rangle}
\newcommand{\ab}{\mathbf{Ab}}
\newcommand{\catC}{\mathbf{C}}
\newcommand{\catCop}{\mathbf{C}^{\text{op}}}
\newcommand{\catA}{\mathbf{A}}
\newcommand{\catAop}{\mathbf{A}^{\text{op}}}
\newcommand{\catB}{\mathbf{B}}
\newcommand{\BP}{\mathbf{P}}
\newcommand{\BPop}{\mathbf{P}^{\text{op}}}
\newcommand{\rmod}{\vrule width 0mm height 0 mm depth 0mm_R\mathbf{Mod}}
\newcommand{\Zmod}{\vrule width 0mm height 0 mm depth
  0mm_\Z\mathbf{Mod}}
\newcommand{\prsh}{\mathbf{PreSh}}
%

\begin{document}

\begin{abstract}    
We show that the unnormalised Khovanov homology of an oriented link
can be identified 
with the derived functors of the inverse limit. This leads to a
homotopy theoretic interpretation of Khovanov homology.
\end{abstract}

\maketitle


\section*{Motivation and introduction}

In order to apply the methods of homotopy theory to Khovanov homology
there are several natural approaches. One is to build a space or
spectrum whose classical invariants give Khovanov homology, then show
its homotopy type is a link invariant, and finally study this space using homotopy
theory. Ideally this approach would begin with some interesting
geometry and lead naturally to Khovanov homology.  One also might hope
to construct something more refined than Khovanov homology in this
way (see Lipshitz-Sarkar \cite{LipshitzSarkar} for a combinatorial approach to
this). 
Another approach is to interpret the
existing constructions of Khovanov homology in homotopy theoretic
terms. 
By placing the
constructions into a homotopy setting one makes Khovanov homology
amenable to the methods and techniques of homotopy theory. In this paper our interest is
with the second of these approaches. Our aim is to show that Khovanov
homology 
can be interpreted in a homotopy theoretic way using homotopy limits
and to 
subsequently develop a number of results about the specific type of
homotopy limit arising. 
The latter will provide homotopy tools appropriate for studying Khovanov homology.

Recall that the central combinatorial input for Khovanov homology
is the decorated ``cube'' of resolutions based on
a link diagram $D$ (see Section \ref{subsection:boolean}). As we
explain later, it is  convenient
to view this cube as a presheaf of abelian groups over a certain poset $\BQ$,
that is, as a functor  $F_{KH} \colon \BQop \ra \ab $.

In the first section we show that Khovanov homology can
be described in terms of the right derived functors of the inverse
limit of this presheaf. 

\begin{firstresult}
Let $D$ be a link diagram and let $F_{KH} \colon\BQop\ra\ab$ be
the Khovanov presheaf defined in \S\ref{subsection:boolean}. Then, 
$$
 \ov{KH}^i ({D}) \cong {\invlim_{\BQop}}^i {F_{KH}}
$$
\end{firstresult}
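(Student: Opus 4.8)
The plan is to realise $\ov{KH}^{i}(D)$ and ${\invlim_{\BQop}}^{i}F_{KH}$ as the cohomology of one and the same cochain complex, passing through $\operatorname{Ext}$-groups in the presheaf category $\prsh(\BQ)$. The homological input is that inverse limits are corepresented by the constant functor: for any small category $\mathcal{C}$ one has $\invlim_{\mathcal{C}}\cong\Hom_{[\mathcal{C},\ab]}(\underline{\bZ},-)$, naturally, so with $\mathcal{C}=\BQop$ and after deriving,
\[
 {\invlim_{\BQop}}^{i}F\;\cong\;\operatorname{Ext}^{i}_{\prsh(\BQ)}\bigl(\underline{\bZ},F\bigr)
\]
for every presheaf $F$ on $\BQ$. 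It therefore suffices to construct one convenient projective resolution $P_{\bullet}\to\underline{\bZ}$ of the constant presheaf in $\prsh(\BQ)$ for which $\Hom_{\prsh(\BQ)}(P_{\bullet},F_{KH})$ is, naturally in $D$, exactly the (unnormalised) Khovanov cochain complex $\ov{C}^{\bullet}(D)$ of \S\ref{subsection:boolean}; passing to cohomology then finishes the argument.

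For the resolution I would use the representable presheaves $h_{q}=\bZ[\Hom_{\BQ}(-,q)]$, $q\in\BQ$, which are projective and, by Yoneda, satisfy $\Hom_{\prsh(\BQ)}(h_{q},F)\cong F(q)$. Put $P_{i}=\bigoplus_{q}h_{q}$, summed over the rank-$i$ elements $q$ of $\BQ$, with boundary $P_{i}\to P_{i-1}$ built from the covering relations of $\BQ$: the component $h_{q}\to h_{q'}$, present precisely when $q'<q$ is a covering, is the canonical map scaled by the incidence sign fixed by the colouring of the Hasse diagram of $\BQ$ that underlies the Khovanov differential. Applying $\Hom_{\prsh(\BQ)}(-,F_{KH})$ and Yoneda turns $P_{\bullet}$ into the cochain complex with degree-$i$ term $\bigoplus_{q}F_{KH}(q)$ and differential the signed sum of the restriction maps of $F_{KH}$ along coverings; since $F_{KH}$ vanishes off the resolution elements of $\BQ$, where its values are the Khovanov modules and its restrictions are the cube's edge maps (\S\ref{subsection:boolean}), this is precisely $\ov{C}^{\bullet}(D)$ with the Khovanov differential.

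What remains is to check that $P_{\bullet}\to\underline{\bZ}$ is genuinely a resolution, the only step feeding on the combinatorics of $\BQ$: exactness in $\prsh(\BQ)$ is tested objectwise, and evaluating at $q_{0}\in\BQ$ collapses $P_{\bullet}(q_{0})$ to the augmented chain complex of the poset of faces of $q_{0}$ — in the cubical picture of \S\ref{subsection:boolean} the boundary complex of a closed cube, or dually a Koszul complex, hence the chain complex of a contractible space — so that $P_{\bullet}(q_{0})$ resolves $\underline{\bZ}(q_{0})=\bZ$ and $P_{\bullet}\to\underline{\bZ}$ is the sought projective resolution. Combining the three points yields ${\invlim_{\BQop}}^{i}F_{KH}\cong\operatorname{Ext}^{i}_{\prsh(\BQ)}(\underline{\bZ},F_{KH})\cong H^{i}(\ov{C}^{\bullet}(D))=\ov{KH}^{i}(D)$. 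I expect the real friction to lie not in any single homological manipulation — each is formal — but in the sign bookkeeping: one must verify that the incidence numbers coming from the cell structure of $\BQ$ match, under $\Hom(-,F_{KH})$, Khovanov's sign convention (equivalently that the chosen colouring makes $\partial^{2}=0$ and reproduces the differential of \S\ref{subsection:boolean}), alongside the routine unwinding of the definition of $F_{KH}$; this, with the contractibility input above, is exactly where the previous section is used.
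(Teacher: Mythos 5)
Your proposal is correct and is essentially the paper's own argument: the presheaves $P_n$ used there are exactly your $\bigoplus_{|q|=n}h_q$ (Proposition \ref{subsection:projective:prop1} is the Yoneda isomorphism $\Hom_{\prsh(\BQ)}(P_n,F)\cong\bigoplus_{|q|=n}F(q)$ proved by hand), the differential is the same signed sum over covering relations, pointwise exactness is checked via the cellular chain complex of the closed cell, and $\Hom_{\prsh(\BQ)}(P_*,F_{KH})\cong K^*$ yields the theorem. Two small slips to tidy: with the paper's reverse-inclusion order the canonical map runs $h_q\to h_{q'}$ for $q\prec q'$ (your inequality is stated the other way round), and $P_*(q_0)$ is the chain complex of the closed ball $\ov{q_0}$ (the suspension of a simplex, contractible precisely because $\1'$ has been adjoined), not the boundary complex of a cube, which would be a sphere and would not resolve $\Z$.
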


On the left we have singly graded unnormalised Khovanov homology (see
Section \ref{subsection:boolean}) 
while on the right we have the $i$-th derived functor of the inverse
limit (see Section \ref{subsection:derived}).
This result is central to the homotopy theoretic interpretation of
Khovanov homology but is also of independent interest: many cohomology
theories are defined as the right derived functors of some interesting
partially exact functor, or at least can be described in such
terms. Examples include group cohomology, sheaf cohomology and
Hochschild cohomology. Obtaining a description in these terms for
Khovanov homology reveals its similarity to existing theories not 
apparent from the original definition. Moreover it opens up
Khovanov homology to the many techniques available to cohomology
theories defined as right derived functors. Also the construction
given in this paper is functorial with respect to morphisms of
presheaves, which being more general, may offer calculational
advantage. By connecting with  a more familiar description of higher 
derived functors we also obtain a description
of Khovanov homology as  the cohomology of the
classifying space equipped with a system of local coefficients as 
described in Proposition \ref{prop:class}.

Right derived functors of a presheaf of abelian groups can be
interpreted in homotopy theoretic terms by way of the homotopy limit
of the corresponding diagram of Eilenberg-Mac Lane spaces.
In the second section we 
recall basic facts about homotopy limits before
returning to  Khovanov homology. We compose the
Eilenberg-Mac Lane space functor $K(-,n)$ with the Khovanov 
presheaf $F_{KH}$ of a link diagram to obtain a
diagram of spaces $\scrF_n\colon \BQop \ra \ptspaces $ 
whose homotopy limit $\YYn D = \holim_{\BQop}\scrF_n$ has 
homotopy groups described in the following proposition.

\begin{secondresult}
$$
  \pi_i(\YYn {D}) \cong 
\begin{cases}
\bkh {n-i} ({D}) & 0\leq i \leq n,\\
0 & \text{ else.}
\end{cases}
$$
\end{secondresult}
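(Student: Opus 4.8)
The plan is to feed the diagram $\scrFn\colon\BQop\ra\ptspaces$ into the Bousfield--Kan homotopy spectral sequence of a homotopy inverse limit and to observe that it degenerates for formal reasons, the substantive input being Theorem~\ref{thm:invlim}. Recall that for any diagram $X\colon I\ra\ptspaces$ over a small category there is a spectral sequence, natural in $X$,
$$
E_2^{s,t}={\invlim_I}^{\,s}\,\pi_t X\ \Longrightarrow\ \pi_{t-s}\bigl(\holim_I X\bigr),\qquad d_r\colon E_r^{s,t}\ra E_r^{s+r,\,t+r-1},
$$
where $\pi_t X$ denotes the functor $i\mapsto\pi_t(X(i))$. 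For $I=\BQop$ this $E_2$-page is bounded, since $\BQ$ is a finite poset and so ${\invlim_{\BQop}}^{\,s}F_{KH}=0$ once $s$ exceeds the length of a longest chain in $\BQ$.

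Apply this to $X=\scrFn=K(-,n)\circ F_{KH}$. As $K(A,n)$ has homotopy concentrated in degree $n$, the functor $\pi_t\scrFn$ equals $F_{KH}\colon\BQop\ra\ab$ for $t=n$ and is zero otherwise, so $E_2$ is concentrated in the single row $t=n$, where Theorem~\ref{thm:invlim} identifies
$$
E_2^{s,n}={\invlim_{\BQop}}^{\,s}F_{KH}\;\cong\;\bkh{s}(D).
$$
A spectral sequence supported in one row has every differential $d_r$ ($r\ge 2$) zero, since $d_r$ shifts $t$ by $r-1\ge 1$; hence $E_2=E_\infty$, and the abutment filtration on $\pi_{n-s}(\YYn D)$ has a single nonzero subquotient, namely $E_\infty^{s,n}$. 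Therefore $\pi_{n-s}(\YYn D)\cong\bkh{s}(D)$; writing $i=n-s$ gives $\pi_i(\YYn D)\cong\bkh{n-i}(D)$ for $0\le i\le n$, while for $i>n$ the relevant term is ${\invlim_{\BQop}}^{\,s}$ with $s<0$ and so vanishes, and $\pi_i=0$ for $i<0$ trivially. This is the asserted formula. (For $n=0$ there is nothing new: $\holim_{\BQop}\scrF_0$ is just $\invlim_{\BQop}$ of discrete pointed sets, and the statement reduces to the $i=0$ case of Theorem~\ref{thm:invlim}.)

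The point needing care --- and the expected main obstacle --- is the low-degree, fringe behaviour of the spectral sequence: $\pi_0$ and $\pi_1$ of a homotopy limit are a priori only a pointed set and a group, so for small $n$ the apparatus near the fringe is the $\lim$--$\lim^1$ exact-sequence package of Bousfield--Kan rather than a genuine spectral sequence. Concentration in a single row removes every extension problem and fringe correction, but one should be explicit that $\holim_{\BQop}\scrFn$ is formed from an objectwise-fibrant, functorial model so that the spectral sequence above is the correct one, and note that for $n\ge 2$ all the spaces involved are simply connected, whence each $\pi_i(\YYn D)$ is automatically an abelian group agreeing with $\bkh{n-i}(D)$. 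A model-independent alternative is to choose an injective resolution $F_{KH}\hookrightarrow I^\bullet$ in $\presh{\BQ}$, apply $K(-,n)$ objectwise, and argue that $\holim_{\BQop}\scrFn$ is a generalized Eilenberg--Mac Lane space built from the cochain complex $\invlim_{\BQop}I^\bullet$, with $k$-th homotopy group ${\invlim_{\BQop}}^{\,n-k}F_{KH}$; but checking that $\holim$ commutes with these constructions is the same computation in another guise.
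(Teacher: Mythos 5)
Your argument is correct and is essentially the paper's: the paper proves this by combining Theorem \ref{thm:invlim} with Proposition \ref{prop:piholim}, which it cites from Bousfield--Kan [XI, 7.2], and your single-row degeneration of the homotopy spectral sequence is precisely the standard derivation of that cited proposition. In other words, you have opened the black box the paper deliberately keeps closed, while the substantive input, Theorem \ref{thm:invlim}, is used identically.
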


For rather elementary reasons the space $\YYn D $ is seen to be a product of
Eilenberg-Mac Lane spaces and thus determined by the Khovanov
homology. Thus the problem of  defining an invariant space or spectrum (a {\em homotopy type})
is ``solved'' by the above as well, but
in an uninteresting way.  Nevertheless we now find ourselves within a homotopy theory
context so can apply its methods and techniques to Khovanov homology.

In the third section we develop this perspective further by isolating
a result about holim and homotopy fibres in this specific situation
which may be useful in the study of Khovanov homology. One central
point is that in the presheaf setting (or using chain complexes) one
has long exact sequences in homology arising from short exact
sequences of presheaves. 
Typically the latter arise from a given injection or surjection and
one requires some luck for this to be the case. In the homotopy
setting, by contrast, {\em any} map of spaces has a homotopy fibre and
an attendant long exact sequence in homotopy groups. 
We illustrate the use of this calculus in the last section where we
discuss the skein relation as the homotopy long exact sequence of the
crossing change map, reprove Reidemeister invariance from the homotopy
perspective and make an explicit computation.

We have tried as far as possible to make this article readable  both
by knot theorists interested in Khovanov homology and by homotopy
theorists with a passing 
interest in knot theory. 

\subsection*{Acknowledgements}

We thank Hans-Werner Henn, Kathryn Hess, Robert Lipshitz,  Sucharit Sarkar,
J\'er\^ome Scherer and the referee for helpful remarks.

\section{Khovanov homology and higher inverse limits}
\label{section:khovanov}

The main result of this section is a
reinterpretation of the (unnormalised) Khovanov homology of a link
as the derived functors of $\invlim$ over a certain small category.

\subsection{A modified Boolean lattice and the inverse limit}
\label{subsection:boolean}

Let $\bB=\bB_A$ be the Boolean lattice on a set $A$: the
poset of subsets of $A$ ordered by \emph{reverse\/}
inclusion. We write $\leq$ for the partial order and $\prec$ for the
covering relation, i.e.: $x\leq y$ when subsets $x\supseteq y$ and  
$x\prec y$ when $x$ is obtained from $y$ by adding a single element.

Now let ${D}$ be a link diagram and $\bB$ the Boolean lattice on the set of
crossings of ${D}$.
Each crossing of ${D}$ can be $0$- or $1$-resolved
$$
\begin{pspicture}(0,0)(13,0.8)
%
\rput(-0.5,-0.1){
\rput(7,0){
\psline(0.2,0.75)(-0.2,0.25)
\psframe[fillstyle=solid,fillcolor=white,linecolor=white](-0.075,0.425)(0.075,0.575)
\psline(-0.2,0.75)(0.2,0.25)
\rput(-2,0){
\psline{<-}(0.5,0.5)(1.5,0.5)
\rput(1,0.75){${\scriptstyle 0}$}
}
\rput(0,0){
\psline{->}(0.5,0.5)(1.5,0.5)
\rput(1,0.75){${\scriptstyle 1}$}
}
\rput(-2,0){
\psbezier[showpoints=false](-0.25,0.7)(-0.2,0.65)(-0.15,0.6)(0,0.6)
\psbezier[showpoints=false](0.25,0.7)(0.2,0.65)(0.15,0.6)(0,0.6)
\psbezier[showpoints=false](-0.25,0.3)(-0.2,0.35)(-0.15,0.4)(0,0.4)
\psbezier[showpoints=false](0.25,0.3)(0.2,0.35)(0.15,0.4)(0,0.4)
}
\rput(2,0){
\psbezier[showpoints=false](-0.2,0.75)(-0.15,0.7)(-0.1,0.65)(-0.1,0.5)
\psbezier[showpoints=false](-0.2,0.25)(-0.15,0.3)(-0.1,0.35)(-0.1,0.5)
\psbezier[showpoints=false](0.2,0.75)(0.15,0.7)(0.1,0.65)(0.1,0.5)
\psbezier[showpoints=false](0.2,0.25)(0.15,0.3)(0.1,0.35)(0.1,0.5)
}
}
}
\end{pspicture}
$$
and if $x$ is some subset of
the crossings, then the complete resolution ${D}(x)$ is what results from $1$-resolving
the crossings in $x$ and $0$-resolving the crossings not in $x$. It is
a collection of planar circles.

Let $V=\Z[1,u]$ where $\Z[S]$ is the free abelian group on the set
$S$. This rank two abelian group becomes a Frobenius algebra using the maps
 $m\colon V\otimes V\rightarrow V$, $\epsilon\colon V\rightarrow \Z$ and
$\Delta\colon V\rightarrow V\otimes V$ defined by
\begin{align*}
m&:1\otimes 1\mapsto 1,\,\,\,\,\,\,1\otimes u\text{ and }u\otimes 1\mapsto u,\,\,\,\,\,\,
u\otimes u\mapsto 0  \\
\epsilon &:1\mapsto 0,\,\,\,\,\,\,
u\mapsto 1\\
\Delta&:1\mapsto 1\otimes u+u\otimes 1,\,\,\,\,\,\,
u\mapsto u\otimes u.
\end{align*}
The ``Khovanov cube'' is obtained by
assigning 
abelian groups to the elements of $\bB$ and homomorphisms
between the groups associated to comparable elements.
One says ``cube'' as the Hasse diagram of the poset $\bB_A$
is the $|A|$-dimensional cube, with edges given by the covering relations. 

For $x\in\bB$ let $F_{KH}(x)=V^{\otimes k}$, with
a tensor factor corresponding to each connected component of ${D}(x)$.
If $x\prec y$ in $\bB$ then ${D}(x)$
results from $1$-resolving a crossing that was $0$-resolved in ${D}(y)$,
with the qualitative effect that two of the circles in ${D}(y)$
fuse into one in ${D}(x)$, or one of the circles in ${D}(y)$ bifurcates
into two in ${D}(x)$.  In the first case 
$F_{KH}(x\prec y):F_{KH}(y)=V^{\otimes k}\rightarrow V^{\otimes k-1}=F_{KH}(x)$ 
is the map
using $m$ on the tensor factors corresponding to the fused circles,
and the identity on the others.  In the second, 
$F_{KH}(x\prec y):F_{KH}(y)=V^{\otimes k}\rightarrow V^{\otimes
k+1}=F_{KH}(x)$ 
is the map using $\Delta$ on the tensor factor
corresponding to the bifurcating circles, and the identity on the
others.

All of this is most concisely expressed by regarding $\bB$ as a category with
objects the elements of $\bB$ and with a unique morphism $x\ra y$
whenever $x\leq y$. The decoration by abelian groups is then
nothing other than a covariant functor, or presheaf,
$$
F_{KH} \colon \bB^\op \ra \ab
$$
where $\ab$ is the category of
abelian groups. 
The diagram ${D}$ is suppressed from the notation.

Each square face of the cube $\bB$ is sent by the functor $F_{KH}$ to
a commutative diagram of abelian groups. To extract a cochain complex
from the decorated cube these squares must
\emph{anti\/}commute, and this is achieved by adding $\pm$ signs to the
edges of the cube so that each square face has an odd number of $-$
signs on its edges. 
We write $[x,y]$ for the sign associated to the edge
$x\prec y$ of $\bB$. 
The Khovanov complex $K^*$ has $n$-cochains
$K^n=\bigoplus_{|x|=n} F_{KH}(x)$
the direct sum over the subsets of size $n$ (or rows of the cube), and differential
$d:K^{n-1}\rightarrow K^n$ given by $d=\sum [x,y]F_{KH}(x\prec y)$,
the sum over all pairs $x\prec y$ with $x$ of size $n$ (or sum of
all signed maps between rows $n-1$ and $n$). That $d$ is
a differential follows immediately from the anti-commuting of the
signage.

\begin{definition}\label{def:kh}
 The unnormalised Khovanov homology of a link diagram
${D}$ is defined as the homology of the Khovanov cochain complex:
$$
\ov{KH}^*({D})= H(K^*, d).
$$
The normalised Khovanov homology of an oriented link diagram
${D}$ with $c$ negative crossings is a shifted version of the above:
$$
KH^*({D})= \ov{KH}^{*+c}({D})
$$
\end{definition}

The normalised Khovanov homology is a link invariant. 
All of the above is standard and there are several reviews of this
material available (see for example Bar-Natan \cite{Bar-Natan02},
Turner \cite{Turner06} and Khovanov \cite{khovanov06}).

\paragraph{A note on the $q$-grading.} Usually there is an internal
grading on Khovanov homology making it a bigraded theory. This 
``$q$-grading'' is important in recovering the Jones
polynomial. A huge amount of information is retained however even if this grading
is completely ignored. For example Khovanov homology detects the unknot 
with or without the $q$-grading. In this paper the $q$-grading plays 
no role and we consider the Frobenius algebra $V$ above as ungraded, 
resulting in a singly graded theory. 

\paragraph{}For what follows we need to modify the poset $\bB$ in a
seemingly innocuous way, but
one which has considerable consequences (see also the remarks at the
end of Section \ref{subsection:projective}). There is a unique maximal
element $\1\in\bB$ (corresponding to the empty subset of $A$) with
$x\leq\1$ for all $x\in\bB$.
Now formally adjoin to $\bB$ an additional maximal element $\1'$ such that
$x\leq\1'$ for all $x\in\bB$ with $x\not=\1$,
and denote the
resulting poset (category)  by $\BQ=\BQ_A$. 
Extend $F_{KH}$ to a (covariant)
functor
$$
F_{KH} \colon \BQop \ra \ab
$$
by setting $F_{KH}(\1')= 0$ and $F_{KH}(x\ra \1'): F_{KH}(\1') \ra
F_{KH}(x)$ to be the only possible homomorphism. 

The construction of $K^*$ extends verbatim to $\BQ$:
the chains are the direct sum over the rows of $\BQ$ (identical to $\bB$ except
for the top row where the zero group is added) and the
differential is the sum of signed maps between
consecutive rows -- again identical except between the first and second
rows; we adopt the convention $[x,\1']=-1$ for an $x$ with $x\prec\1'$. 
The resulting homology is easily seen to be the unnormalised
Khovanov homology again.

It will be convenient later to identify $\BQ$ with the poset of cells
of a certain CW complex. Recall that a CW complex $X$ is regular 
if for any cell $x$ the characteristic map
$\Phi_x:(B^k,S^{k-1})\rightarrow(X^{k-1}\cup x,X^{k-1})$ is a
homeomorphism of $B^k$ onto its image. We can then define a partial
order on the cells of $X$ by $x\leq y$ exactly when 
$\ov{x}\supseteq y$, 
where $\ov{x}$ is the (CW-)closure of the cell. 

To realise $\BQ_A$ as such a thing suppose that $|A|=n$ and let
$\Delta^{n-1}$ be an $(n-1)$-simplex. Let $X$ be the suspension
$S\Delta^{n-1}$, an $n$-ball, and take the obvious CW decomposition of
$X$ with two $0$-cells (the suspension points) and all other cells the
suspensions $Sx$ of the cells $x$ of $\Delta^{n-1}$. As the suspension
of cells preserves the inclusions $\ov{x}\supseteq y$ and the two
$0$-cells are maximal with respect to this we get $X$ has cell poset
$\BQ$. An $x\in\BQ$ corresponds to an $|x|$-dimensional cell of $X$;
the case $n=3$ is in Figure \ref{fig:orange}.

\begin{figure}
  \centering
\begin{pspicture}(0,0)(13,4.5)
\rput(4.4,1.8){
\rput(0,0){\BoxedEPSF{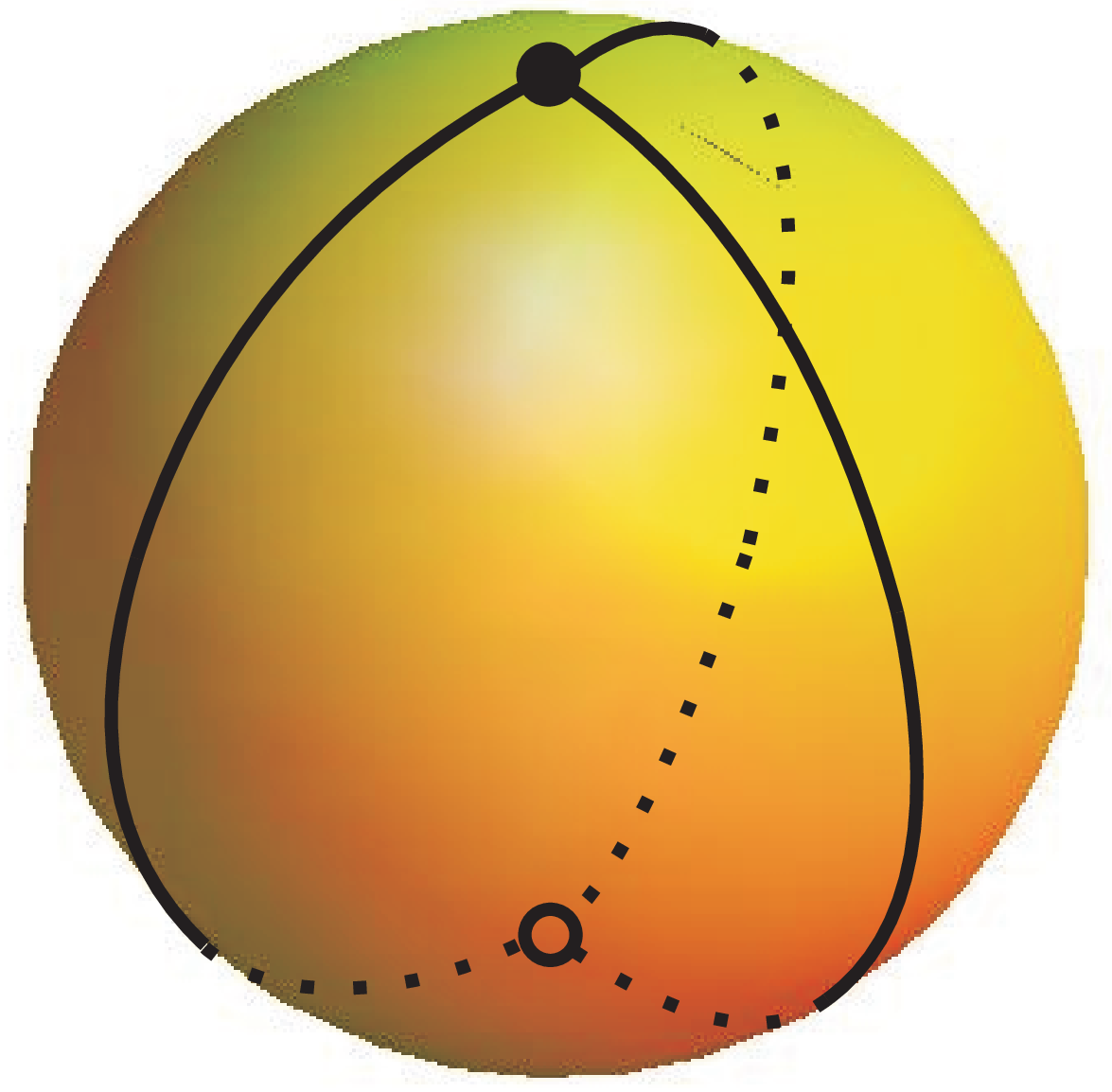 scaled 300}}
\rput(0.1,1.4){$\1$}
\rput(0.15,-0.7){$\1'$}
}
\rput(9,2){
\rput(0,0){\BoxedEPSF{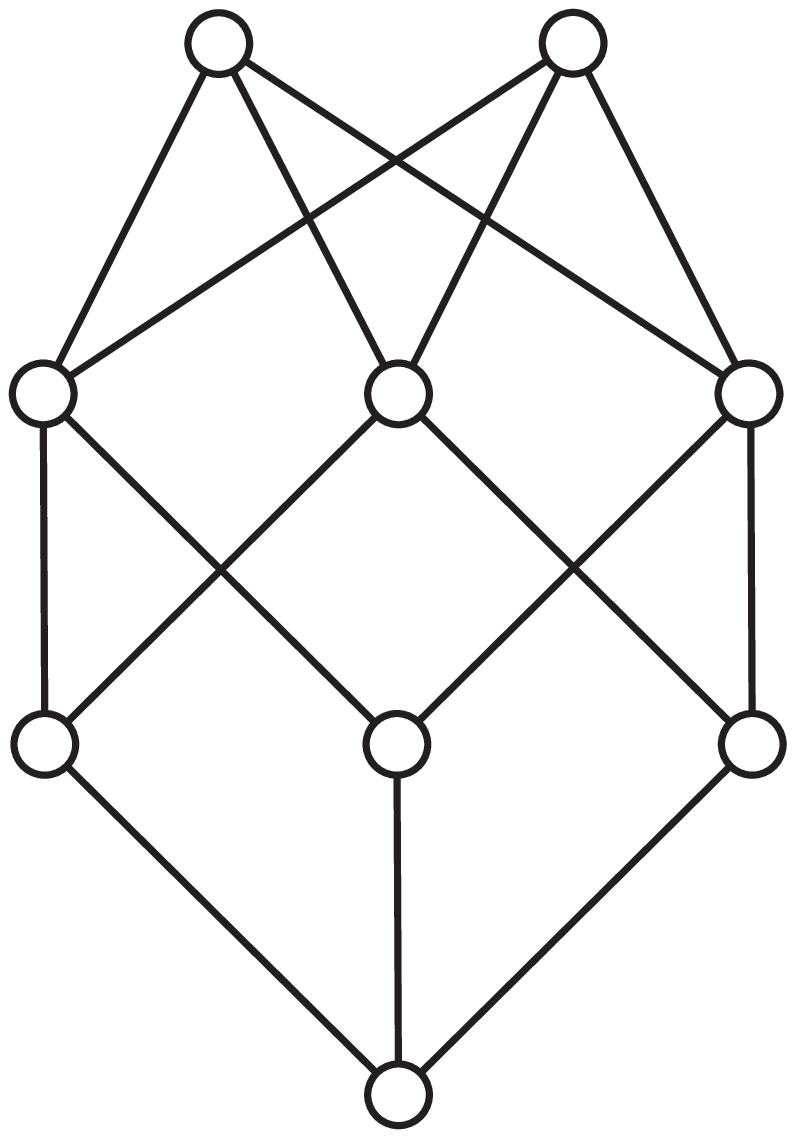 scaled 300}}
\rput(-0.55,1.9){$\1$}
\rput(0.55,1.9){$\1'$}
}
\end{pspicture}
\caption{Regular CW complex $X$ (\emph{left}) with cell poset $\BQ_A$ 
(\emph{right})
  for $|A|=3$.}
  \label{fig:orange}
\end{figure}

Using the signage
introduced above, if $x$ is a $1$-cell 
we have $[x,\1]+[x,\1']=0$; if $\dim x-\dim
y=2$ and $z_1,z_2$ are the unique cells with $x\prec
z_i\prec y$,
then $[x,z_1][z_1,y]+[x,z_2][z_2,y]=0$. These properties then ensure
that there are orientations
for the cells of $X$ so that $[x,y]$ is the incidence number of the
cells $x$ and $y$ (see Massey \cite[Chapter IX, Theorem 7.2]{Massey91}).

We finish this introductory subsection by recalling the definition of the inverse limit
of abelian groups. 
Let $\catC$ be a small category and
$F:\catC\rightarrow\ab$ a functor. Then the
inverse limit $\invlim_{\catC} F$ is an abelian group that is
universal with respect to the property that for all $x\in\catC$ there are
homomorphisms $\invlim_{\catC} F\rightarrow F(x)$
that commute with the homomorphisms $F(x)\rightarrow F(x')$ for all
morphisms $x\rightarrow x'$ in $\catC$. The limit
is constructed by taking the subgroup of the product
$\Pi_{x\in\catC}F(x)$ consisting of those $\catC$-tuples
$(\aa_x)_{x\in\catC}$ such that for all morphisms $x\ra x'$, 
the induced map $F(x)\rightarrow F(x')$ sends
$\aa_x$ to $\aa_{x'}$. 

It is an easy exercise to see that $\invlim_{\BQop} F_{KH}=\ker d^0$,
the degree zero differential of the cochain complex $K^*$, and so
\begin{equation}
  \label{eq:1.1.1}
\invlim_{\BQop} F_{KH} \cong \bkh 0 ({{D}})
\end{equation}

\subsection{Derived functors of the inverse limit}
\label{subsection:derived}

We have seen that presheaves of abelian groups 
provide a convenient language for the construction
of Khovanov homology, and that the inverse limit 
of the presheaf $F_{KH}:\BQop\rightarrow\ab$ 
captures this homology in degree zero. In this subsection we review
general facts about the category of presheaves, the inverse limit functor and
its derived functors. These ``higher limits'' give, by definition, the
cohomology of a small category $\catC$ with coefficients in a
presheaf. The moral is that they are computed using projective
resolutions for the trivial (or constant) presheaf, just as group
cohomology, say, is computed using projective resolutions for the trivial
$G$-module.
The material here is standard (see
e.g. Weibel \cite[Chapter 2]{Weibel94}) and obviously holds in greater
generality; rather than working in the category $\rmod$ of modules
over a commutative ring $R$, we content ourselves with
$\ab:=\Zmod$. 
 In the following subsection we will show that
the higher limits capture Khovanov homology in \emph{all\/} degrees,
not just degree zero. 

Recall that a presheaf on a small category $\catC$ is a (covariant) functor 
$F:\catCop\rightarrow\ab$. The category
$\prsh(\catC)=\ab^{\catCop}$ has objects the presheaves
$F:\catCop\rightarrow\ab$ and morphisms the natural
transformations $\tau:F\rightarrow G$. For $x\in\catC$ we write $F(x)$
for its image in $\ab$ and $\tau_x$ for the map $F(x)\rightarrow G(x)$
making up the component at $x$ of the natural transformation $\tau$.

$\prsh(\catC)$ is an abelian category having enough projective and injective
objects.
Many basic
constructions in $\prsh(\catC)$, such as kernels, cokernels, decisions
about exactness, etc, can be constructed
locally, or ``pointwise'', e.g. the value of the presheaf $\ker(\tau:F\rightarrow
G)$ at $x\in\catC$ is $\ker(\tau_x:F(x)\rightarrow G(x))$, and
similarly for images. In particular, a sequence of presheaves
$F\rightarrow G\rightarrow H$ is exact  if and only if for
all $x\in\catC$ the local
sequence $F(x)\rightarrow G(x)\rightarrow H(x)$ is exact.

The simplest presheaf is the constant one: if $A\in\ab$ 
define $\Delta A:\catCop\rightarrow\ab$ by $\Delta A(x)=A$ for all
$x$ and for all morphisms
$x\rightarrow y$ in $\catC$ let $\Delta A(x\rightarrow
y)=1:\Delta A(y)\rightarrow\Delta A(x)$. 
If $f:A\rightarrow B$ is a map of abelian groups then there is a natural
transformation $\tau:\Delta A\rightarrow\Delta B$ with
$\tau_x:\Delta A(x)\rightarrow\Delta B(x)$ the map $f$. Thus
we have the constant sheaf functor
$\Delta:\ab\rightarrow\prsh(\catC)$
which is easily seen to be exact. 

We saw at the end of \S\ref{subsection:boolean}
that the inverse limit $\invlim F$ exists
in $\ab$ for any presheaf $F\in\prsh(\catC)$. Indeed, we have a (covariant) functor
$\invlim:\prsh(\catC)\rightarrow\ab$
by universality. 
For any $A\in\ab$ and any $F\in\prsh(\catC)$ there are natural bijections
\begin{equation}
  \label{eq:1.2.1}
\Hom_{\prsh(\catC)}(\Delta A,F)\cong\Hom_\Z(A,\invlim F),
\end{equation}
so that $\invlim$ is right adjoint to
$\Delta$. In particular $\invlim$
is left exact, and we have the right derived functors
$$
\textstyle{\invlim^i:=R^i\invlim:\prsh(\catC)\rightarrow\ab}\,\,\,\,\,\,\,(i\geq
0)
$$
with $\invlim^0$ naturally isomorphic to $\invlim$.

A special case of the adjointness
(\ref{eq:1.2.1}) is the following: for any presheaf $F$ over $\catC$ the universality of the limit gives a
homomorphism $\Hom_{\prsh(\catC)}(\Delta\Z,F)\rightarrow\invlim F$
that sends a natural transformation $\tau\in \Hom_{\prsh(\catC)}(\Delta\Z,F)$ to the tuple
$(\tau_x(1))_{x\in\catC}\in\invlim F$. This is in fact a natural
isomorphism, so we have a natural isomorphism of functors
$$
\invlim\cong\Hom_{\prsh(\catC)}(\Delta\Z,-)
$$
and thus
\begin{equation}
  \label{eq:1.2.3}
\textstyle{\invlim^i\cong
  R^i\Hom_{\prsh(\catC)}(\Delta\Z,-)}\,\,\,\,\,\,\,
\text{for all $i\geq 0$.}
\end{equation}
If $0\rightarrow F\rightarrow G\rightarrow H\rightarrow 0$ is a short
exact sequence in $\prsh(\catC)$ then there is a long exact sequence
in $\ab$: 
\begin{equation}
  \label{eq:10}
0\longrightarrow
\invlim F\longrightarrow
\invlim G\longrightarrow
\invlim H\longrightarrow
\cdots
{\textstyle \invlim^i} F\longrightarrow
{\textstyle \invlim^i} G\longrightarrow
{\textstyle \invlim^i} H\longrightarrow
\end{equation}

It turns out that the derived functors of the covariant $\Hom$ functor
in (\ref{eq:1.2.3}) can be replaced by the derived functors of the contravariant
$\Hom$ functor. Let $F,G$ be presheaves over the small category $\catC$. Then
$$
R^i\Hom_{\preshc}(F, - )(G) \cong R^i\Hom_{\preshc}(-,G)(F)
$$
for all $i\geq 0$. One thinks of this as a ``balancing Ext'' result
for presheaves. The corresponding result in $\rmod$ is \cite[Theorem 2.7.6]{Weibel94}, and the reader can check that the proof given there goes
straight through in $\prsh(\catC)$. Summarizing, for $F\in\prsh(\catC)$
\begin{equation}
  \label{eq:1.2.4}
\textstyle{
\invlim^i(F)
\cong (R^i\Hom_{\prsh(\catC)}(\Delta\Z,-))(F)
\cong (R^i\Hom_{\prsh(\catC)}(-,F))(\Delta\Z)
}.  
\end{equation}

To compute the right derived functors of a contravariant functor
like $\Hom_{\prsh(\catC)}(-,F)$, we use
a projective resolution. 
Let $P_*\rightarrow\Delta\Z$ be a projective resolution for
$\Delta\Z$, i.e.: an exact sequence 
\begin{equation}
  \label{eq:1.2.5}
\cdots 
\stackrel{\delta}{\longrightarrow}
P_2 
\stackrel{\delta}{\longrightarrow}
P_1
\stackrel{\delta}{\longrightarrow}
P_0  
\stackrel {\varepsilon}{\longrightarrow}
\Delta\Z
\longrightarrow
0
\end{equation}
with the $P_i$ projective presheaves. Then the final term in
(\ref{eq:1.2.4}) is the degree $i$ cohomology of the cochain complex
$\Hom_{\prsh(\catC)} (P_*,F)$:
\begin{equation}
  \label{eq:1.2.6}
\cdots
\stackrel{\delta^*}{\longleftarrow}
\Hom_{\prsh(\catC)} (P_1,F)
\stackrel{\delta^*}{\longleftarrow}
\Hom_{\prsh(\catC)} (P_0,F)
\longleftarrow
0
\end{equation}

\subsection{A projective resolution of $\Delta\bZ$ and the Khovanov
  complex}
\label{subsection:projective}

We return now to the particulars of \S\ref{subsection:boolean} and
compute the cochain complex (\ref{eq:1.2.6}) when $F=F_{KH}$, the Khovanov
presheaf in $\prsh(\BQ)$ where $\BQ$ is the poset of \S\ref{subsection:boolean}. 
To do this we present a particular 
projective resolution for the constant presheaf $\Delta\Z$ on $\BQ$.

We start by constructing a presheaf $P_n$ in $\prsh(\BQ)$ for each
integer $n>0$ . 
Remembering that $\BQ$ is the cell poset of the
regular CW complex $X$ of \S\ref{subsection:boolean}, for $x\in\BQ$
set
$$
P_n(x):=\Z[\text{$n$-cells of $X$ contained in the closure of the cell $x$}].
$$
Thus if $\dim x < n$ then $P_n(x)=0$; if $\dim x = n$ then $P_n(x) =
\Z[x]\cong\Z$; 
and if $\dim x >n$ then $P_n(x)$ is a direct sum of copies of $\Z$,
one copy for each $n$-cell in the boundary of $x$.  
If $x\leq y$ in $\BQ$ then we take $P_n(x\leq y)\colon P_n(y) \ra P_n(x)$ to be 
the obvious inclusion.

For a given presheaf $F\in\prsh(\BQ)$ there is a nice characterization of the group of presheaf
morphisms $P_n\rightarrow F$:

\begin{proposition}
\label{subsection:projective:prop1}
For $F\in\prsh(\BQ)$ the map 
$$
f^n: \Hom_{\prsh(\BQ)}(P_n,F)\ra\kern-1mm\bigoplus_{\dim x=n}\kern-1mmF(x)
$$
defined by $f^n(\tau) = \sum_{\dim x=n} \tau_x(x)$, is an isomorphism of abelian groups.
\end{proposition}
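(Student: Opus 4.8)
The plan is to write down an explicit inverse to $f^n$ and check the two composites are the identity. Given a tuple $(a_x)_{\dim x = n}$ with $a_x \in F(x)$, I would construct a presheaf morphism $\tau = \tau(a)\colon P_n \to F$ as follows. For $y \in \BQ$, the group $P_n(y)$ is free on the set $S_y$ of $n$-cells contained in $\overline{y}$; since $P_n$ is required to be a presheaf morphism target, the component $\tau_y\colon P_n(y)\to F(y)$ is forced on generators by naturality. Indeed, for an $n$-cell $x \in S_y$ we have $x \leq y$ in $\BQ$, hence a morphism $x \to y$ in $\BQop$ and a map $F(x \leq y)\colon F(y) \to F(x)$; wait — the direction needs care, so let me be precise: in $\prsh(\BQ) = \ab^{\BQop}$ a morphism $x\le y$ in $\BQ$ gives $P_n(x\le y)\colon P_n(y)\to P_n(x)$, the inclusion, and naturality of $\tau$ demands $\tau_x \circ P_n(x\le y) = F(x\le y)\circ \tau_y$. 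Since $P_n(x) = \Z[x]$ for the $n$-cell $x$ and $\tau_x(x) = a_x$ must hold (this is what $f^n$ records), I would \emph{define} $\tau_y\colon P_n(y) \to F(y)$ by sending a generator $x \in S_y$ to the image of $a_x$ under the (unique) structure map $F(y) \to F(x)$'s section — no: more simply, $\tau_y(x) := F(x\le y)(\text{?})$ won't work directly, so instead one defines $\tau_y$ on the generator $x$ to be whatever naturality forces, and the content of the proof is that this is \emph{consistent} and \emph{natural}.

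Let me restructure. The honest statement is: a presheaf morphism $\tau\colon P_n \to F$ is determined by its values on the free generators of each $P_n(y)$, and an $n$-cell $x$ appears as a generator of $P_n(y)$ for every $y \le x$ (i.e. every $y$ with $x$ in its closure — note $P_n(x) = \Z[x]$ is the "smallest" such). By naturality applied to $x \le y$, the value $\tau_y(x) \in F(y)$ must satisfy $F(x \le y)(\tau_y(x)) = \tau_x(P_n(x\le y)(x))$... and $P_n(x \le y)(x) = x \in P_n(x)$, so this reads $F(x\le y)(\tau_y(x)) = \tau_x(x) = a_x$. But $F(x\le y)\colon F(y)\to F(x)$ need not be injective, so $\tau_y(x)$ is \emph{not} determined by $a_x$ alone — so the map $f^n$ cannot be injective this way unless something special happens.

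The resolution of this apparent difficulty — and I expect this to be the crux — is the structure of $\BQ$: every $n$-cell $x$ is \emph{maximal} among cells of dimension $\le n$ below it in the sense that if $x \le y$ and $x$ is an $n$-cell, then... actually the key is the opposite: one should read $P_n(y) \to P_n(x)$ for $y \ge x$, i.e. $y$ \emph{higher}-dimensional, and $P_n$ is the presheaf of "$n$-cells in the closure", so the generators of $P_n(y)$ biject with $n$-cells $x$ with $x \le y$ in $\BQ$ (meaning $\overline{x}\supseteq y$)... I need to recheck the variance, but the combinatorial point is that $P_n(x \le y)$ is always an \emph{inclusion of free groups on subsets}, so $\tau$ is determined by its values on the finitely many $P_n(x)$ with $\dim x = n$ (each $\cong \Z$), because every generator of every $P_n(y)$ "comes from" a unique such $P_n(x)$ via these inclusions, and naturality then propagates the value. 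So first I would verify: (1) for any $y$, the canonical maps $P_n(x) \to P_n(y)$ (sic — in the correct variance) over all $n$-cells $x$ related to $y$ jointly span $P_n(y)$ freely; (2) hence $\Hom(P_n,F) \cong \prod_{\dim x = n}\Hom(P_n(x), F(x)) \cong \bigoplus_{\dim x = n} F(x)$, the last because $\Hom(\Z, F(x)) = F(x)$ and the index set is finite; (3) unwind that this composite isomorphism is exactly $f^n$, i.e. $\tau \mapsto (\tau_x(x))_x$. The naturality/well-definedness check in step (2) — that an arbitrary choice of $(a_x)$ does extend consistently to all $P_n(y)$ — is the main obstacle, and it works precisely because $P_n$ was built from the regular CW structure so that the relevant diagrams of inclusions commute on the nose.

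Finally I would note that surjectivity of $f^n$ is the existence half of this extension and injectivity is the uniqueness half, both subsumed in the bijection of step (2)–(3); since all maps in sight are $\Z$-linear, $f^n$ is an isomorphism of abelian groups, and naturality in $F$ (though not asserted in the statement) is immediate from the construction should it be needed later.
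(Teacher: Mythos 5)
Your overall strategy is the right one and is essentially the paper's: a morphism $\tau\colon P_n\to F$ is determined by the elements $\tau_x(x)\in F(x)$ for the $n$-cells $x$, and conversely any tuple extends, so $f^n$ is bijective. But as written the argument has two concrete defects that leave it incomplete. First, the variance is never pinned down -- you flag it yourself but do not resolve it -- and both times you state it you have it backwards: with the paper's conventions ($x\le y$ iff $\ov{x}\supseteq y$, so higher-dimensional cells sit \emph{lower} in $\BQ$), the generators of $P_n(y)$ are the $n$-cells $z$ with $y\le z$ (equivalently $z\subseteq\ov{y}$), not the cells $x$ with $\ov{x}\supseteq y$. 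The ``apparent difficulty'' you raise -- that naturality only yields $F(x\le y)(\tau_y(x))=a_x$ with $F(x\le y)$ possibly non-injective -- is purely an artifact of this reversal. With the correct direction, naturality applied to the morphism $y\le z$ and the generator $z\in P_n(z)=\Z[z]$ reads $\tau_y(z)=\tau_y\bigl(P_n(y\le z)(z)\bigr)=F(y\le z)\bigl(\tau_z(z)\bigr)=F(y\le z)(a_z)$, so $\tau_y(z)$ \emph{is} forced by $a_z$; no special feature of $\BQ$ is needed beyond the definition of $P_n$ and the fact that each structure map $P_n(y\le z)$ carries generator to generator.

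Second, the step you yourself call ``the main obstacle'' -- that an arbitrary tuple $(a_x)$ extends to a genuine presheaf morphism -- is asserted rather than verified, and the reason you offer (the inclusions commuting on the nose, coming from the regular CW structure) is not where the content lies. Define $\tau_y$ on a generator $z$ of $P_n(y)$ by $\tau_y(z):=F(y\le z)(a_z)$; the naturality square for $w\le y$ evaluated on $z$ reduces to $F(w\le z)(a_z)=F(w\le y)\bigl(F(y\le z)(a_z)\bigr)$, which holds simply because $F$ is a functor and $w\le y\le z$ composes to $w\le z$ in the poset. Until this check (together with the corrected order relation) is actually carried out, your items (1)--(3) record the desired conclusion but do not prove it; once it is done, your argument coincides with the paper's proof, in which injectivity is exactly the forcing computation above and surjectivity is the free extension of values on the $n$-cells.
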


\begin{proof}
That $f^n$ is a homomorphism is clear since $(\tau+\ss)_x=\tau_x+\ss_x$.
To show injectivity, suppose that $f^n(\tau) =0$ from which it follows
that $\tau_x(x) = 0$ for all $n$-cells $x\in\BQ$.
To show that $\tau =0$ we must prove that $\tau_y\colon P_n(y) \ra
F(y)$ is zero for all $y\in \BQ$. For $\dim y< n $ there is nothing to
prove since $P_n(y)=0$. For $\dim y = n$ we have $P_n(y) = \Z[y]$
and $\tau_y(y) = 0$ since $y$ is an $n$-cell. For $\dim y > n$ 
$$
P_n(y) = \Z[y_\alpha \mid \dim y_\alpha =n \text{ and } y_\alpha \text{ in the closure of } y]
$$
and we have
$$
\tau_y(y_\alpha) = \tau_y (P_n(y\leq y_\alpha)(y_\alpha)) = F(y\leq y_\alpha)(\tau_{y_\alpha}(y_\alpha)) = 0.
$$
The first equality since $P_n(y\leq y_\alpha)\colon P_n(y_\alpha) \ra
P_n(y)$ is an inclusion, and the second by naturality of $\tau$ and
the third since $y_\alpha$ is an $n$-cell. 
Finally, $f^n$ is surjective because $P_n(x)$ is free and so there
is no restriction on the images $\tau_x(x)\in F(x)$ .
\end{proof}

The isomorphism given in Proposition \ref{subsection:projective:prop1}
allows us to  define a morphism
$\tau:P_n\rightarrow F$ by specifying a tuple $\sum\lambda_x\in\oplus
F(x)$, where the sum is over the $n$-cells $x$.

It is easy to see that the $P_n$ are projective presheaves. 
Given the following diagram of presheaves and morphisms (with solid arrows) and
exact row:
\begin{equation*}
\xymatrix{
&P_n \ar[d]^-{\tau}\ar@{.>}[ld]_-{\widehat{\tau}}& 
\\
G \ar[r]^-{\ss} & F \ar[r] & 0
}  
\end{equation*}
then the local maps $G(x)\stackrel{\ss_x}{\longrightarrow}F(x)$ are
surjections. Thus if $\sum\lambda_x\in\oplus
F(x)$ specifies the map $\tau$
then for each $x$ there is a $\mu_x\in G(x)$ with
$\ss_x(\mu_x)=\lambda_x$. Hence there exists a morphism
$\widehat{\tau}: P_n\rightarrow G$ specified by $\sum\mu_x$, which
clearly makes the diagram commute. The $P_n$
are thus projective presheaves.

We now assemble the $P_n$'s into a resolution of $\Delta\Z $ by
defining maps $\delta_n:P_n\rightarrow P_{n-1}$. For $x\in\BQ$ 
let $\delta_{n,x}\colon P_n(x) \ra P_{n-1}(x)$ be the homomorphism
defined
by
$$
\delta_{n,x}(y)=\sum_{y\prec z} [y,z]\,z
$$
for $y$ an $n$-cell $\subset \ov{x}$ and the sum is over the
$(n-1)$-cells $z\subset\ov{y}$. Here,
$[y,z]=\pm 1$ is the incidence number of $y$ and $z$ given by
the orientations chosen at the end of \S\ref{subsection:boolean}.
It is easy to check that these homomorphisms assemble into a morphism
of presheaves 
$\delta_n:P_n\rightarrow P_{n-1}$.
The sequence
\begin{equation*}
\cdots
\stackrel{\delta}{\longrightarrow}
P_{n+1}
\stackrel{\delta}{\longrightarrow}
P_{n}
\stackrel{\delta}{\longrightarrow}
P_{n-1}
\stackrel{\delta}{\longrightarrow}
\cdots
\end{equation*}
is exact at $P_n$ if and only if each of the local sequences
$P_*(x)$ is exact at $P_n(x)$. But $P_*(x)$ is
nothing other than the cellular chain complex of the
$\dim(x)$-dimensional ball corresponding to the closure of
$x$ with the induced CW decomposition. 
In particular 
$$
H_nP_*(x)=
\left\{
\begin{array}{ll}
\Z, & n=0\\
0,  & n>0
\end{array}
\right.
$$
so that $P_*(x)$, and hence $P_*$, is exact in degree $n>0$.

To define an augmentation $P_0\stackrel
{\ve}{\rightarrow}\Delta\Z\rightarrow 0$ take $\ve$ to be the
canonical surjection onto $\coker(\delta)$:
$$
P_1\stackrel
{\delta}{\rightarrow}  P_0\stackrel
{\ve}{\rightarrow}\coker(\delta)\rightarrow 0.
$$
The computation of $P_*(x)$  above immediately shows that $\coker(\delta) \cong \Delta\Z$.  

We now have our projective resolution  (\ref{eq:1.2.5})  for
$\Delta\Z$ and  hence a cochain
complex (\ref{eq:1.2.6}) that computes the derived functors $\invlim^i
F_{KH}$. Proposition \ref{subsection:projective:prop1} gives 
an isomorphism of graded abelian groups 
$f:\Hom_{\prsh(\BQ)}(P_*,F_{KH})\rightarrow K^*$ where $K^*$ is
the Khovanov cochain complex of \S\ref{subsection:boolean}. As the
following lemma shows, $f$ is in fact a chain map and thus there is an
isomorphism of cochain complexes
$$
\Hom_{\prsh(\BQ)}(P_*,F_{KH})\cong K^*.
$$

\begin{lemma}
$f$ is a chain map $\Hom_{\prsh(\BQ)}(P_*,F_{KH})\rightarrow
K^*$. 
\end{lemma}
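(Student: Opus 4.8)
The plan is to check that $f$ commutes with the differentials by a direct local computation, tracing how a morphism $\tau \colon P_n \to F_{KH}$ is built from its specifying tuple and how both differentials act on such tuples. Recall from Proposition~\ref{subsection:projective:prop1} that $f^n(\tau) = \sum_{\dim x = n}\tau_x(x) \in \bigoplus_{|x|=n} F_{KH}(x) = K^n$, and that $\tau$ is determined by this tuple. So it suffices to fix an $n$-cell $x$ and an element $\lambda \in F_{KH}(x)$, let $\tau \in \Hom_{\prsh(\BQ)}(P_n, F_{KH})$ be the morphism specified by the tuple with $\lambda$ in the $x$-slot and $0$ elsewhere, and compare $f^{n+1}(\delta_{n+1}^*\tau)$ with $d\bigl(f^n(\tau)\bigr)$ in $K^{n+1}$, slot by slot over the $(n{+}1)$-cells.

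First I would compute the right-hand side. The Khovanov differential $d\colon K^n \to K^{n+1}$ is $d = \sum [x,z]\, F_{KH}(x\prec z)$, so the $z$-component of $d(f^n(\tau))$, for $z$ an $(n{+}1)$-cell, is $[x,z]\,F_{KH}(x\prec z)(\lambda)$ if $x \prec z$ and $0$ otherwise. Next I would compute the left-hand side: $\delta_{n+1}^*\tau = \tau \circ \delta_{n+1}$, and its $z$-slot under $f^{n+1}$ is $(\tau\circ\delta_{n+1})_z(z) = \tau_z\bigl(\delta_{n+1,z}(z)\bigr)$. By the definition of $\delta_{n+1,z}$ this is $\tau_z\bigl(\sum_{x' \prec z}[x',z]\,x'\bigr)$, the sum over $n$-cells $x' \subset \overline{z}$. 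Now I would use that $\tau$ vanishes on every $n$-cell other than $x$: the sum collapses to $[x,z]\,\tau_z(x)$ when $x \prec z$, and to $0$ otherwise. Finally, by naturality of $\tau$ applied to the inclusion $P_n(x \leq z)\colon P_n(z) \hookrightarrow \cdots$, wait—here one must be careful with variance—one has $\tau_z\bigl(P_n(x \leq z)(x)\bigr) = F_{KH}(x\leq z)\bigl(\tau_x(x)\bigr) = F_{KH}(x\prec z)(\lambda)$, using $x \leq z$ in $\BQ$ (reverse inclusion of cells, so $x$ of smaller dimension), exactly as in the injectivity argument of Proposition~\ref{subsection:projective:prop1}. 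Hence the $z$-slot of $f^{n+1}(\delta_{n+1}^*\tau)$ is $[x,z]\,F_{KH}(x\prec z)(\lambda)$, matching the right-hand side. Since both sides are additive in the specifying tuple, this establishes $f^{n+1}\circ\delta_{n+1}^* = d\circ f^n$ on all of $K^n$.

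The one genuine subtlety—and the step I would flag as the main obstacle—is bookkeeping the variance and the sign conventions so that the incidence numbers $[y,z]$ appearing in $\delta_{n+1,z}$ are literally the same as the signs $[x,z]$ appearing in the Khovanov differential $d$. This is exactly what was arranged at the end of \S\ref{subsection:boolean}: the orientations on the cells of $X$ were chosen so that $[x,y]$ simultaneously denotes the edge-sign making the cube anticommute \emph{and} the cellular incidence number, including the convention $[x,\1']=-1$. So once one invokes that identification, the signs on the two sides agree on the nose and there is no residual discrepancy to resolve; the remaining verification is the routine naturality manipulation above, together with the trivial observation that for $\dim z = n{+}1$ the only $n$-cells contributing are those with $x \prec z$.
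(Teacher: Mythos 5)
Your proof is correct and is essentially the paper's argument: both sides are computed componentwise from the definition of $\delta_{n+1}$ together with the naturality of $\tau$ (which gives $\tau_z(x)=F_{KH}(\,\cdot\,)(\lambda_x)$ for an $n$-cell $x$ in the closure of an $(n+1)$-cell $z$), with the signs agreeing because $[\,\cdot\,,\cdot\,]$ is simultaneously the cube sign and the cellular incidence number; the only difference is that you reduce to tuples with a single nonzero slot and finish by additivity, where the paper computes with a general $\tau$ directly. One small caution: under the paper's convention ($x\leq y$ iff $y\subseteq\ov{x}$, so lower-dimensional cells are \emph{larger} in $\BQ$) the relevant relation is $z\prec x$ and the inclusion is $P_n(z\leq x)\colon P_n(x)\hookrightarrow P_n(z)$, so your ``$x\leq z$'' labels are reversed, although the maps you actually invoke are the correct ones.
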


\begin{proof}
We must show that the following diagram commutes.

\begin{equation*}
\xymatrix{
&  \Hom_{\prsh(\catC)} (P_{n+1},F_{KH})\ar[d]^{f^{n+1}}
& \Hom_{\prsh(\catC)} (P_n,F_{KH})\ar[l]_-{\delta} \ar[d]^{f^n}
\\
&  K^{n+1} 
&  K^n \ar[l]_-{d} 
}  
\end{equation*}

Let $\tau\in\Hom_{\prsh(\BQ)}(P_n,F_{KH})$ and write $F$ for $F_{KH}$. If
$x$ is an $n$-cell, write $\lambda_x:=\tau_x(x)\in F(x)$ so that
 $f^n$ sends $\tau$ to the tuple $\sum_x\lambda_x$, the sum
over the $n$-cells of $X$. Applying the Khovanov
differential $d$ we get 
$$
d(f^n(\tau)) = \sum_x \sum_{y\prec\,x} [x,y]\,F(y\prec x)(\lambda_x).
$$ 
Consider now $\delta(\tau) = \tau\delta \in \Hom_{\prsh(\catC)}
(P_{n+1},F_{KH})$. For $y$ an $(n+1)$-cell we have 
$\delta_y(y) = \sum_{x\,\succ y} [x,y]x$ and by an argument similar to
that in  the proof of
Proposition \ref{subsection:projective:prop1}, for $x$ an $n$-cell we
have
$\tau_y(x) = F(y\prec x)(\lambda_x)$. Thus $f^{n+1}(\delta(\tau))$ is
equal to
$$
\sum_{\dim y = n+1}\hspace{-3mm}(\tau_y\delta_y)(y) =
\sum_y \sum_{x\,\succ y} [x,y]\tau_y(x) = \sum_y \sum_{x\,\succ y}
[x,y]F(y\prec x)(\lambda_x) = d(f^n(\tau)).\proved
$$
\end{proof}

Summarizing: to compute the higher limits of the
Khovanov presheaf we use the complex
$\Hom_{\prsh(\BQ)}(P_*,F_{KH})$, which  is isomorphic to
$K^*$, and this in turn computes the unnormalised Khovanov homology.
We have therefore proved the first theorem:

\begin{theorem}
\label{thm:invlim}
Let $D$ be a link diagram and let $F_{KH} \colon \BQop \ra \ab $ be
the Khovanov presheaf defined in \S\ref{subsection:boolean}. Then
 $$
  \bkh i ({D}) \cong {\invlim_{\BQop}}^i F_{KH}
$$
\end{theorem}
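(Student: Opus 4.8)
The plan is to assemble the ingredients established in this subsection; the theorem amounts to the assertion that they fit together. First I would invoke the balancing isomorphism (\ref{eq:1.2.4}), which identifies $\invlim^i_{\BQop} F_{KH}$ with the $i$-th right derived functor of the \emph{contravariant} functor $\Hom_{\prsh(\BQ)}(-,F_{KH})$ evaluated at the constant presheaf $\Delta\Z$, i.e. $\invlim^i_{\BQop} F_{KH}\cong (R^i\Hom_{\prsh(\BQ)}(-,F_{KH}))(\Delta\Z)$.

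Second, to compute that derived functor I would use the projective resolution $P_*\ra\Delta\Z$ constructed above: apply $\Hom_{\prsh(\BQ)}(-,F_{KH})$ to it to obtain the cochain complex (\ref{eq:1.2.6}) with $F=F_{KH}$, whose degree-$i$ cohomology is, by definition of the right derived functors of a contravariant functor, equal to $(R^i\Hom_{\prsh(\BQ)}(-,F_{KH}))(\Delta\Z)$. This step rests on the two facts verified just above: that each $P_n$ is a projective presheaf (via the lifting property against presheaf surjections, using that the groups $P_n(x)$ are free), and that $P_*\ra\Delta\Z$ is exact in positive degrees with $\coker(\delta)\cong\Delta\Z$ (because each local complex $P_*(x)$ is the cellular chain complex of the closed ball $\ov x$, hence acyclic in positive degrees with $H_0=\Z$).

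Third, I would use Proposition \ref{subsection:projective:prop1} together with the preceding Lemma to identify $\Hom_{\prsh(\BQ)}(P_*,F_{KH})$ with the Khovanov cochain complex $K^*$ of \S\ref{subsection:boolean} \emph{as cochain complexes}: the maps $f^n$ are isomorphisms of abelian groups by the Proposition, and they constitute a chain map by the Lemma. Finally, by Definition \ref{def:kh} the cohomology of $K^*$ is the unnormalised Khovanov homology. Chaining the isomorphisms gives
$$
\invlim^i_{\BQop} F_{KH}\;\cong\;H^i\bigl(\Hom_{\prsh(\BQ)}(P_*,F_{KH})\bigr)\;\cong\;H^i(K^*,d)\;=\;\bkh i ({D}),
$$
which is the claim.

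No genuine obstacle remains at this stage: the substantive work was done in constructing $P_*$ and certifying it as a projective resolution of $\Delta\Z$, and in the Lemma's proof that $f$ is a chain map. If I had to single out the delicate point, it is exactly that verification — that the identification $\Hom_{\prsh(\BQ)}(P_n,F_{KH})\cong\bigoplus_{\dim x=n}F_{KH}(x)$ intertwines the differential $\delta^*$ induced by the cellular boundary maps $\delta_n$ with the signed Khovanov differential $d=\sum[x,y]F_{KH}(x\prec y)$, which in turn depends on choosing the cell orientations of $X$ so that $[x,y]$ is the incidence number (as arranged at the end of \S\ref{subsection:boolean}). Granting that, the theorem follows formally from the general homological algebra of \S\ref{subsection:derived}.
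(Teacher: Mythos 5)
Your proposal is correct and follows essentially the same route as the paper: invoke the balancing isomorphism (\ref{eq:1.2.4}), compute the derived functors with the projective resolution $P_*\ra\Delta\Z$, and use Proposition \ref{subsection:projective:prop1} together with the chain-map lemma to identify $\Hom_{\prsh(\BQ)}(P_*,F_{KH})$ with $K^*$, whose cohomology is $\bkh{*}(D)$ by definition. The paper's proof is exactly this assembly of the preceding results, so there is nothing to add.
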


\begin{remark}
It is essential that we use the modified Boolean lattice
$\BQ$ rather 
than just $\bB$: if we work with the Khovanov
presheaf over 
$\bB$ then the higher limits all vanish. This
follows from the general fact that for a presheaf over a finite
poset with unique maximal element  the higher limits all vanish -- see
Mitchell \cite{Mitchell72}.
\end{remark}

\subsection{Aside on the cohomology of classifying spaces with
  coefficients in a presheaf}
\label{subsection:classifyingspaces}

Although not central to what follows it is worthwhile
making the connection with a more topological description of 
higher limits in which ${\invlim}^iF_{KH}$ is
identified with the cohomology of a classifying
space equipped with a system of local coefficients.
We recall that the classifying space $B\catC$ is the geometric
realization of the nerve of the small category $\catC$.
This point of
view is novel in the context of  Khovanov homology,
so we give a brief presentation of it, but otherwise we make no particular claim to originality here. 

Starting with a presheaf $F\in\prsh(\catC)$,
the cochain complex $C^*(B\catC,F)$ is defined on the nerve of
$\catC$ to have cochains
$$
C^n(B\catC,F) =\kern-3mm\prod_{x_0\rightarrow\cdots\rightarrow
  x_n}\kern-2mm F(x_0),
$$
the product over sequences of morphisms
$x_0\stackrel{f_1}{\rightarrow} \cdots \stackrel{f_n}{\rightarrow}
x_n$ in $\catC$.
If $\lambda\in C^n$ write 
$\lambda\cdot(x_0\rightarrow\cdots\rightarrow x_n)$ for the component
of $\lambda$  in the copy of $F(x_0)$ indexed by the sequence
$x_0\rightarrow\cdots\rightarrow x_n$.
The coboundary map $d:C^n(B\catC,F)\rightarrow C^{n+1}(B\catC,F)$ is given by
\begin{align*}
d\lambda&\cdot(x_0\stackrel{f_1}{\rightarrow} \cdots \stackrel{f_{n+1}}{\rightarrow} x_{n+1})
=F(x_0\stackrel{f_1}{\rightarrow} x_1)
(\lambda\cdot(x_1\stackrel{f_2}{\rightarrow} \cdots \stackrel{f_{n+1}}{\rightarrow} x_{n+1}))\\
&+\sum_{i=1}^{n} (-1)^i \lambda
\cdot(x_0\stackrel{f_1}{\rightarrow} \cdots x_{i-1} \stackrel{f_{i}f_{i+1}}{\longrightarrow} x_{i+1}\cdots
\stackrel{f_{n+1}}{\rightarrow} x_{n+1})  
+(-1)^{n+1}\lambda\cdot(x_0\stackrel{f_1}{\rightarrow} \cdots \stackrel{f_n}{\rightarrow} x_{n}).
\end{align*}
Write $H^*(B\catC, F)$ for the cohomology of $C^*(B\catC,F)$.
The following result of Moerdijk \cite[Proposition II.6.1]{Moerdijk95}
shows that this cochain complex computes the higher limits: 

\begin{proposition}
\label{prop:limcat}
Let $F\in\presh{\catC}$. Then
${\displaystyle H^*(B\catC, F)\cong {\invlim_{\catC}}^* F}$.
\end{proposition}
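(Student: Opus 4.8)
The plan is to exhibit $C^*(B\catC,F)$ as $\Hom_{\prsh(\catC)}(Q_*,F)$ for a specific projective resolution $Q_*\to\Delta\Z$ built from the nerve, so that Proposition \ref{prop:limcat} follows from the identification (\ref{eq:1.2.4}) of $\invlim^*$ with $R^*\Hom_{\prsh(\catC)}(-,F)$ applied to $\Delta\Z$. Concretely, for each $x\in\catC$ I would set $Q_n(x)=\Z[\,x_0\to\cdots\to x_n \mid x_n = x\,]$, the free abelian group on $n$-chains of the nerve ending at $x$ (equivalently, $Q_n(x)=\Z[N_n(\catC/x)]$ where $\catC/x$ is the over-category), with $Q_n(x\to y)\colon Q_n(y)\to Q_n(x)$ the map post-composing the terminal morphism $x_n\to y$ with $y\to x$. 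The face maps of the nerve give $\delta_n\colon Q_n\to Q_{n-1}$ exactly as in the formula for the coboundary in the statement (omitting the last face, which is absorbed into the change of terminal object), and one checks $\delta^2=0$ and that $Q_*$ assembles into a complex of presheaves augmented over $\Delta\Z$ via $Q_0(x)=\Z[\,x_0\to x\,]\twoheadrightarrow\Z$.

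The two things to verify are that $Q_*\to\Delta\Z$ is a \emph{projective} resolution, and that $\Hom_{\prsh(\catC)}(Q_*,F)$ is naturally isomorphic to $C^*(B\catC,F)$ with the stated differential. For projectivity: each $Q_n$ is a direct sum, over $n$-chains $\sigma=(x_0\to\cdots\to x_n)$ of the nerve, of the representable presheaf $\Z[\Hom_\catC(-,x_n)]=\Z[y_\sigma]$ where $y_\sigma$ is a shorthand for the terminal object; these representables are projective by the Yoneda lemma (a morphism $\Z[\Hom_\catC(-,x_n)]\to F$ is the same as an element of $F(x_n)$, and surjections are detected pointwise), and direct sums of projectives are projective. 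For exactness: the local complex $Q_*(x)$ is the normalized (or unnormalized, up to the usual chain-homotopy) chain complex of the nerve of $\catC/x$, which has a terminal object, hence a contractible classifying space, so $Q_*(x)$ is a resolution of $\Z$ in each degree; since exactness of presheaves is pointwise, $Q_*\to\Delta\Z$ is exact. For the $\Hom$ computation, apply Proposition \ref{subsection:projective:prop1}'s template: a morphism $Q_n\to F$ is determined freely by, for each $n$-chain $\sigma=(x_0\to\cdots\to x_n)$, an element of $F(x_n)$; but I instead want $F(x_0)$, so I would use the terminal morphism to reindex --- more precisely, the natural choice is $Q_n(x)=\Z[\,x_0\to\cdots\to x_n=x\,]$ paired against $F$ contravariantly, and one reads off $\Hom(Q_n,F)\cong\prod_\sigma F(x_n)$; matching this with the paper's $C^n(B\catC,F)=\prod_\sigma F(x_0)$ requires taking the resolution built from chains \emph{starting} at $x$ rather than ending at it, i.e.\ using the under-category $x\backslash\catC$, whose nerve is contractible because it has an \emph{initial} object. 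I would therefore set $Q_n(x)=\Z[N_n(x\backslash\catC)]$ from the outset; then $\Hom_{\prsh(\catC)}(Q_n,F)$ is naturally $\prod_{x_0\to\cdots\to x_n}F(x_0)$ and tracing the face maps through the $\Hom$ gives precisely the displayed coboundary $d$.

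The main obstacle is bookkeeping of variance and of which face map gets ``used up'' by the presheaf structure: the zeroth face $\partial_0$ (deleting $x_0$) cannot be formed inside $Q_*(x)$ in the naive way because it changes the indexing object $x_0$ --- this is exactly why the first term of $d\lambda$ in the statement involves $F(x_0\xrightarrow{f_1}x_1)$ rather than just a signed sum, and it is the place where the presheaf differential and the simplicial differential interact nontrivially. I would handle this by defining $\delta_n\colon Q_n\to Q_{n-1}$ as the alternating sum of the faces $\partial_1,\dots,\partial_n$ (which are honest presheaf maps), check directly that the induced map on $\Hom(-,F)$ produces the remaining $\partial_0$-term via naturality, and confirm $\delta_{n-1}\delta_n=0$ from the simplicial identities. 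Once the dictionary is set up this is routine; the only real content beyond Proposition \ref{subsection:projective:prop1}-style arguments is the contractibility of $N(x\backslash\catC)$, which is standard (an initial object gives a simplicial contraction, hence $H_0=\Z$ and $H_{>0}=0$). With $Q_*\to\Delta\Z$ a projective resolution and $\Hom_{\prsh(\catC)}(Q_*,F)\cong C^*(B\catC,F)$, (\ref{eq:1.2.4}) yields $H^*(B\catC,F)\cong R^*\Hom_{\prsh(\catC)}(-,F)(\Delta\Z)\cong\invlim^*_\catC F$, which is the claim; the reference to Moerdijk \cite{Moerdijk95} can be cited as confirmation rather than re-derived in full.
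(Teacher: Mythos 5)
Your overall strategy is sound and would give a genuinely self-contained proof, which is more than the paper offers: the paper does not prove Proposition \ref{prop:limcat} at all, it simply quotes Moerdijk. The corrected form of your construction is the standard bar resolution: $Q_n(x)=\Z[N_n(x\backslash\catC)]$, equivalently $Q_n=\bigoplus_{x_0\to\cdots\to x_n}\Z[\Hom_\catC(-,x_0)]$, which is projective by Yoneda, is locally the augmented simplicial chain complex of the nerve of $x\backslash\catC$ (contractible, since $\mathrm{id}_x$ is initial), and has $\Hom_{\prsh(\catC)}(Q_n,F)\cong\prod_{x_0\to\cdots\to x_n}F(x_0)=C^n(B\catC,F)$; then (\ref{eq:1.2.4}) finishes. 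Your switch from $\catC/x$ to $x\backslash\catC$ is also forced, since the over-category version is covariant in $x$ and is not a presheaf.

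The genuine gap is your treatment of $\partial_0$. Defining $\delta_n$ as the alternating sum of $\partial_1,\dots,\partial_n$ only fails on both counts. First, the truncated complex is no longer a resolution of $\Delta\Z$: the omitted outer face is exactly what an extra-degeneracy contraction uses, and the truncated complex is acyclic with $H_0=0$. (Test on the one-object category with only the identity: every $Q_n(x)=\Z$ and every face is the identity, so the truncated differentials are alternately $0$ and $\pm1$ with $Q_1\to Q_0$ an isomorphism, hence $\coker(\delta_1)=0\neq\Z$; dually $\Hom(Q_*,F)$ would be acyclic and compute $0$, not $\invlim^*F$.) Second, dualizing a differential that omits $\partial_0$ cannot ``produce the remaining $\partial_0$-term via naturality'': each summand of $\Hom(\delta_n,F)$ is indexed by a face you actually included, and there is no mechanism that creates the term $F(x_0\stackrel{f_1}{\to}x_1)\bigl(\lambda\cdot(x_1\to\cdots\to x_{n+1})\bigr)$ out of nothing. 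The fix is that your worry is unfounded: $\partial_0$ \emph{is} a morphism of presheaves. In the $x\backslash\catC$ picture it is just the $0$-th face of the nerve, and $N_\bullet(x\backslash\catC)$ is functorial (contravariantly) in $x$; in the representable picture it is the map $\Z[\Hom_\catC(-,x_0)]\to\Z[\Hom_\catC(-,x_1)]$ given by post-composition with $f_1$ on the summand indexed by $x_0\stackrel{f_1}{\to}x_1\to\cdots\to x_n$, and under Yoneda its dual is precisely $F(f_1)\colon F(x_1)\to F(x_0)$ applied to the $(x_1\to\cdots\to x_{n+1})$-component, i.e.\ the first term of the paper's coboundary. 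With all $n+1$ faces included, $Q_*(x)$ is the augmented unnormalized chain complex of the contractible space $B(x\backslash\catC)$, so $Q_*\to\Delta\Z$ is a projective resolution, $\Hom_{\prsh(\catC)}(Q_*,F)\cong C^*(B\catC,F)$ as cochain complexes, and your argument closes correctly.
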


From Theorem \ref{thm:invlim} we immediately get the following
description 
of unnormalised Khovanov homology in terms of the cohomology of 
the classifying space $B\BQ$ with a system of local coefficients 
induced by the Khovanov presheaf:

\begin{proposition}  
\label{prop:class}
Let $D$ be a link diagram and let $F_{KH}\colon \BQop \ra \ab $ be
the Khovanov presheaf defined in \S\ref{subsection:boolean}. 
Then, 
$$
   \bkh * ({D}) \cong H^*(B\BQ; F_{KH}).
$$
\end{proposition}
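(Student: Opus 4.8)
The plan is to obtain this as a direct composition of Theorem~\ref{thm:invlim} with Moerdijk's Proposition~\ref{prop:limcat}; no new idea is required, and the work consists entirely of lining up conventions.

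First I would note that, by its construction in \S\ref{subsection:boolean}, the Khovanov presheaf is an object of $\prsh(\BQ)$, i.e.\ a covariant functor $F_{KH}\colon\BQop\ra\ab$. Proposition~\ref{prop:limcat} is stated for an arbitrary $F\in\presh{\catC}$ and for the cochain complex $C^*(B\catC,F)$ written out immediately before it, so I would simply specialise it to $\catC=\BQ$ and $F=F_{KH}$. Since a presheaf on $\BQ$ is by definition a functor on $\BQop$, the functor ``$\invlim_\BQ$'' occurring in Moerdijk's statement is the inverse-limit functor $\invlim\colon\prsh(\BQ)\ra\ab$ of \S\ref{subsection:derived}, i.e.\ the limit over the indexing category $\BQop$; so Proposition~\ref{prop:limcat} yields a natural isomorphism
$$
H^*(B\BQ,F_{KH})\;\cong\;{\invlim_{\BQop}}^*F_{KH},
$$
and the right-hand side is precisely the higher limit appearing in Theorem~\ref{thm:invlim}. (If one prefers to apply Moerdijk in the form $\catC=\BQop$ one instead invokes the homeomorphism $B\BQop\cong B\BQ$; taking $\catC=\BQ$ avoids even this.) Substituting Theorem~\ref{thm:invlim}, namely ${\invlim_{\BQop}}^iF_{KH}\cong\bkh i(D)$, into the displayed isomorphism then gives $\bkh *(D)\cong H^*(B\BQ;F_{KH})$, as asserted.

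The step most worth attention is exactly this bookkeeping: checking that the cochain complex $C^*(B\BQ,F_{KH})$ defined in \S\ref{subsection:classifyingspaces} is the one to which Proposition~\ref{prop:limcat} (as quoted from \cite{Moerdijk95}) applies, and keeping track of the two independent occurrences of ``op'' --- one in ``$F_{KH}$ is a presheaf on $\BQ$, hence a functor on $\BQop$'', the other in ``the inverse limit is indexed by $\BQop$''. These points are routine once written out, and there is no genuine obstacle: the mathematical content has already been supplied by Theorem~\ref{thm:invlim} together with Moerdijk's theorem.
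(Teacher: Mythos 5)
Your proposal is correct and matches the paper's own argument: Proposition \ref{prop:class} is obtained there exactly by combining Moerdijk's Proposition \ref{prop:limcat} (applied to $F_{KH}\in\presh{\BQ}$) with Theorem \ref{thm:invlim}, the only content being the bookkeeping of conventions you describe. Nothing further is needed.
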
 

\begin{remark}
Proposition \ref{prop:class} is very similar in spirit to the main
result (Theorem 24) of the authors \cite{EverittTurner1} which gives an
isomorphism between a homological version of Khovanov homology and a
slight variation on the homology of a poset with coefficients in a
presheaf (termed ``coloured poset homology'' in
\cite{EverittTurner1}; see also \cite{EverittTurner2}). 
\end{remark}

\section{Interpreting higher limits in homotopy theoretic terms}
\label{section:holims}

\subsection{Homotopy limits}
Limits and colimits exist in the category of spaces but are problematic in the
homotopy category: deforming the
input data up to homotopy  may not result in the same homotopy
type. This problem is resolved by the use  of  {\em homotopy}
limits and {\em homotopy} colimits, which are now standard
constructions in homotopy theory.  
In this section we will use homotopy limits to build spaces whose
homotopy groups are Khovanov homology. We begin by recalling the key
properties of homotopy limits, and while we will adopt a blackbox
approach to
the actual construction (leaving the inner workings
firmly inside the box), we will provide references to the
classic text by Bousfield and Kan \cite{BousfieldKan}. 

We briefly return to the generality of a small category, but later
will again specialise to posets. Let $\spaces$ denote the category of
{\em pointed} spaces. All spaces from now on will be pointed. Let
$\BC$ be a small category and let 
$\spaces^\BC$ be the category of diagrams of spaces of shape $\BC$: an
object is a (covariant) functor $\diagX\colon \BC \ra \spaces$
and a morphism $f:\diagX \ra \diagY$ is a natural  transformation.
Thus a diagram of spaces associates to each object of $\BC$ a
(pointed) space and to each morphism of $\BC$ a (pointed) continuous
function such that these fit together 
in a coherent way.  Given a morphism $f:\diagX \ra \diagY$ we will use
the notation 
$f_x$ for the component at $x$. 
The {\em trivial diagram} takes value the
one-point space $\star$  for all objects of $\BC$ and the identity map
$\star\rightarrow\star$ for all morphisms. 

For our purposes holim is a covariant functor
$$
\holim_\BC \colon \diagc \ra \spaces
$$
whose main properties are recalled below in Propositions
\ref{prop:holimhomotopy}-\ref{prop:holimmapping} . For a morphism
$f\colon \diagX\ra \diagY$ we denote by $\bar{f}$ the induced map $\holim \diagX \ra \holim
\diagY$. The holim construction is natural
with respect to change of underlying category: a functor $F\colon \BC^\prime \ra
\BC$ induces a map $ \holim_\BC \diagX \ra
\holim_{\BC^\prime}\diagX \circ F$.

\begin{remark}
We adopt the convention of Bousfield and Kan \cite{BousfieldKan} where if 
pressed on the matter, space means ``simplicial
set''.  Furthermore, if thus
pressed, we will also assume that diagrams take as values  {\em fibrant}
simplicial sets \cite[VIII, 3.8]{BousfieldKan}. Indeed there are
models of Eilenberg-Mac Lane spaces that are simplicial groups, and
hence fibrant. The reader should be aware however that in the proper
generality the propositions below require fibrant objects.
\end{remark}

The first important property of holim is its well-definedness in the homotopy
category; it is robust with respect
to deformation by homotopy \cite[XI, 5.6]{BousfieldKan}:

\begin{proposition}[Homotopy]
\label{prop:holimhomotopy}
Let $f\colon \diagX \ra \diagY$ be a morphism in $\diagc$ such that
for all $x\in\diagX$ the map 
$f_x\colon \diagX(x) \ra \diagY(x)$ is a homotopy equivalence. Then      
$\bar{f}\colon \holim \diagX \ra \holim \diagY$ is a homotopy
equivalence.  
\end{proposition}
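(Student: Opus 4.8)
The statement in question is Proposition~\ref{prop:holimhomotopy}: if $f\colon \diagX \ra \diagY$ is an objectwise homotopy equivalence of diagrams, then $\bar f\colon \holim \diagX \ra \holim \diagY$ is a homotopy equivalence. Since the paper has declared a blackbox approach to homotopy limits, the plan is \emph{not} to unwind the Bousfield--Kan cosimplicial/end construction, but rather to reduce to the cited references. The cleanest route is: first note (using the fibrancy convention in the Remark above) that both $\diagX$ and $\diagY$ are diagrams of fibrant simplicial sets, so the homotopy invariance statement of Bousfield--Kan applies verbatim. Concretely, I would invoke \cite[XI, 5.6]{BousfieldKan}, which states exactly that holim preserves objectwise weak equivalences between diagrams of fibrant objects. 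Because in $\spaces$ (simplicial sets) a map between fibrant objects is a homotopy equivalence if and only if it is a weak equivalence, the hypothesis ``$f_x$ is a homotopy equivalence for all $x$'' is equivalent to ``$f_x$ is a weak equivalence for all $x$,'' and hence $\bar f$ is a weak equivalence; since $\holim$ of fibrant diagrams is itself fibrant \cite[XI, 4.1]{BousfieldKan}, $\bar f$ is in fact a homotopy equivalence. That is the whole argument at the level of rigour appropriate to this paper.

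If one wanted a slightly more self-contained sketch rather than a pure citation, I would proceed as follows. Recall that $\holim_\BC \diagX$ is defined as a suitable (homotopy) end, $\holim_\BC \diagX = \Hom_\BC(B(\BC\!\downarrow\!-), \diagX)$, i.e.\ an equalizer built from mapping spaces out of the nerves of undercategories into the values $\diagX(x)$. A natural transformation $f\colon \diagX \ra \diagY$ induces, objectwise, maps on these mapping spaces; the point is that when $\diagX(x)$ and $\diagY(x)$ are fibrant and $f_x$ is a weak equivalence, $\Hom(K, f_x)\colon \Hom(K, \diagX(x)) \ra \Hom(K, \diagY(x))$ is a weak equivalence for every simplicial set $K$ (this is the standard fact that mapping out of an arbitrary object into a weak equivalence of fibrant objects is a weak equivalence). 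The homotopy end is then a homotopy limit of a punctual-type diagram of such mapping spaces, and homotopy limits preserve objectwise weak equivalences by construction --- which is of course circular unless one appeals to the cofibrancy of the $B(\BC\!\downarrow\!-)$ and the Bousfield--Kan machinery, so in practice I would still cite \cite[XI, 5.6]{BousfieldKan} at this point.

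\textbf{Main obstacle.} There is no real mathematical obstacle here --- the result is a foundational property of holim. The only thing to be careful about is the fibrancy hypothesis: the invariance statement genuinely requires the diagrams to take fibrant values, and this is precisely why the Remark preceding the proposition pins down the convention that spaces mean simplicial sets and diagram values are fibrant (e.g.\ simplicial groups modelling Eilenberg--Mac Lane spaces). So the ``proof'' is really: cite \cite[XI, 5.6]{BousfieldKan}, and observe that the standing fibrancy convention makes its hypotheses hold and makes ``homotopy equivalence'' and ``weak equivalence'' interchangeable for the maps involved. I would keep the written proof to two or three sentences of this form.
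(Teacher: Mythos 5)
Your proposal matches the paper exactly: the paper offers no proof of this proposition, treating it as a blackbox property of holim and citing precisely \cite[XI, 5.6]{BousfieldKan}, with the preceding Remark supplying the standing fibrancy convention that makes the citation apply. Your additional observation about interchanging homotopy equivalences and weak equivalences between fibrant objects is a correct gloss on why the citation suffices, but it adds nothing beyond what the paper implicitly relies on.
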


Next, a morphism of diagrams which is locally a fibration  induces a
fibration on holim \cite[XI, 5.5]{BousfieldKan}:

\begin{proposition}[Fibration]
\label{prop:holimfibration}
Let $f\colon \diagX \ra \diagY$ be a morphism in $\diagc$ such that
for all $x\in\diagX$ 
the map $f_x\colon \diagX(x) \ra \diagY(x)$ is a fibration. Then      
$\bar{f}\colon \holim \diagX \ra \holim \diagY$ is a fibration. 
\end{proposition}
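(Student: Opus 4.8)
The plan is to reduce the statement to the pushout--product axiom of a simplicial model category, by presenting $\holim_{\BC}$ as a functor on $\diagc$ that is corepresented by a \emph{cofibrant} object. Recall (this is one of the standard Bousfield--Kan models, see \cite[Ch.~XI]{BousfieldKan}) that there is an isomorphism, natural in $\diagX$,
$$
\holim_{\BC}\diagX \;\cong\; \Map_{\diagc}\bigl(\underline{E},\,\diagX\bigr),
$$
where $\underline{E}\colon\BC\to\spaces$ is the diagram $x\mapsto N(\BC{\downarrow}x)$ assembled from the nerves of the over-categories $\BC{\downarrow}x$; since each $\BC{\downarrow}x$ has a terminal object, $\underline{E}$ takes contractible values and the canonical map $\underline{E}\to\star$ to the trivial diagram is an objectwise weak equivalence. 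Under this identification $\bar f$ is just $\Map_{\diagc}(\underline{E},f)$, so the whole question concerns mapping objects out of $\underline{E}$.

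First I would equip $\diagc=\spaces^{\BC}$ with the \emph{projective} model structure, whose fibrations and weak equivalences are precisely the objectwise ones; the hypothesis of the Proposition says exactly that $f$ is a fibration in this structure. The projective model structure on a diagram category of (pointed) simplicial sets is a simplicial model category, with simplicial mapping object and cotensor computed objectwise. Granting that $\underline{E}$ is projectively cofibrant, I would then apply the pushout--product axiom (SM7) to the cofibration $\emptyset\hookrightarrow\underline{E}$ and the fibration $f$: this yields at once that $\Map_{\diagc}(\underline{E},f)=\bar f\colon\holim\diagX\to\holim\diagY$ is a Kan fibration. Note that, in contrast with the Homotopy proposition \ref{prop:holimhomotopy}, no fibrancy hypothesis on the diagrams themselves is needed for this direction.

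The hard part will be the single non-formal ingredient: that the weight $\underline{E}$ is cofibrant in the projective model structure, i.e.\ that the objectwise-nerve-of-over-categories is a cofibrant replacement of the trivial diagram. This is a statement about the degeneracy (``free'') structure of the over-category nerves and is exactly what the Bousfield--Kan machinery supplies; I would cite it rather than reprove it.

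As a consistency check from the other side --- really the same combinatorics repackaged --- I would also keep in mind the description $\holim_{\BC}\diagX = \mathrm{Tot}\bigl(\prod^{\bullet}\diagX\bigr)$ via the cosimplicial replacement. Each $\prod^{n}f$ is a product of copies of the fibrations $f_x$, hence a fibration, so $\prod^{\bullet}f$ is an objectwise fibration of cosimplicial spaces; moreover it is a \emph{Reedy} fibration, because the $n$-th matching object of $\prod^{\bullet}\diagX$ is exactly the product of the ``degenerate'' factors of $\prod^{n}\diagX$, so that the relative matching map of $\prod^{\bullet}f$ splits as a product of copies of the $f_x$ with an identity map. Since $\mathrm{Tot}$ is right Quillen for the Reedy model structure on cosimplicial simplicial sets, it carries $\prod^{\bullet}f$ to a Kan fibration, recovering $\bar f$. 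In both approaches the crux is the combinatorial bookkeeping (cofibrancy of $\underline{E}$, equivalently Reedy-fibrancy of the cosimplicial replacement), and the rest is formal.
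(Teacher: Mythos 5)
Your proposal is correct, but note that the paper itself does not prove this statement at all: it is quoted as a black box from Bousfield--Kan \cite[XI, 5.5]{BousfieldKan}, so any proof you give is "different from the paper's" by default. Your primary argument is a clean modern repackaging: writing $\holim_{\BC}\diagX\cong\Map_{\diagc}(\underline{E},\diagX)$ with $\underline{E}(x)=N(\BC\downarrow x)$, putting the projective model structure on $\diagc$ (so the hypothesis is exactly that $f$ is a projective fibration), and invoking SM7 for the cofibration $\emptyset\to\underline{E}$. This is sound; the one non-formal input, projective cofibrancy of $\underline{E}$, is genuinely standard (the $n$-simplices of $\underline{E}$ form a coproduct of representables $\coprod_{x_0\to\cdots\to x_n}\BC(x_n,-)$, and the skeletal filtration has latching maps that are inclusions of summands of such free diagrams), so citing it is no worse than what the paper does with the whole proposition. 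Two small points of hygiene: since the paper's $\spaces$ is pointed, either run the SM7 argument unpointed and observe that being a Kan fibration is a condition on underlying simplicial sets, or replace $\emptyset\to\underline{E}$ by $\star\to\underline{E}_+$; and your "consistency check" via the cosimplicial replacement and $\mathrm{Tot}$ is in fact essentially Bousfield--Kan's own proof of XI.5.5 (objectwise fibrations go to Reedy fibrations because the relative matching map splits as a product of copies of the $f_x$ with an identity, and $\mathrm{Tot}$ is right Quillen), so that second route is the one that matches the cited source, while the first buys a shorter argument at the price of quoting cofibrancy of the weight.
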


There is also a nice description of holim for diagrams over a
product of categories \cite[XI, 4.3]{BousfieldKan}:

\begin{proposition}[Product]
\label{prop:holimproduct}
Let $\diagX\colon \BC \times \BD\ra \spaces$ be a diagram of spaces
over the product category $\BC \times \BD$. Then
$$
\holim_\BC \holim_\BD \diagX \simeq \holim_{\BC\times\BD}\diagX
\simeq\holim_\BD \holim_\BC \diagX
$$
\end{proposition}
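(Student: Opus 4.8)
The plan is to obtain all three equivalences from a single ``Fubini'' principle, by working with an explicit bar-construction model of the homotopy limit rather than any abstract characterisation. I would use the fact (Bousfield--Kan \cite[XI]{BousfieldKan}) that for a pointwise fibrant diagram $\diagX\colon\BC\ra\spaces$ there is a natural isomorphism
$$
\holim_\BC\diagX\ \cong\ \int_{c\in\BC}\diagX(c)^{\,B(\BC\!\downarrow\! c)},
$$
an end (weighted limit) over $\BC$, in which $\BC\!\downarrow\! c$ is the over-category and $B(-)$ the nerve --- equivalently, $\holim_\BC=\mathrm{Tot}\circ\prod^\bullet_\BC$ is the totalisation of the cosimplicial replacement, whose $s$-cosimplices form the product of copies of $\diagX$ indexed by the length-$s$ chains of composable morphisms in $\BC$. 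I would lead with the end description, since it produces a strict natural isomorphism, and then translate into the cosimplicial picture to pin down the one non-formal step.

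First I would record three elementary facts: (i) there is a canonical isomorphism of categories $(\BC\times\BD)\!\downarrow\!(c,d)\cong(\BC\!\downarrow\! c)\times(\BD\!\downarrow\! d)$ --- equivalently, a length-$s$ chain of composable morphisms in $\BC\times\BD$ is precisely a pair of such chains, one in $\BC$ and one in $\BD$; (ii) the nerve carries products of categories to products of simplicial sets, $B(\EE\times\EE')\cong B\EE\times B\EE'$; and (iii) in pointed simplicial sets the exponential law $Z^{W\times W'}\cong(Z^{W'})^{W}$ holds, exponentiation by a fixed object preserves ends, and ends commute with one another. Feeding (i)--(iii) into the end formula and pulling the outer weight $B(\BC\!\downarrow\! c)$ through the inner end over $d$, I would obtain a chain of natural isomorphisms
\begin{align*}
\holim_\BC\holim_\BD\diagX\
&\cong\ \int_{c}\Bigl(\int_{d}\diagX(c,d)^{B(\BD\downarrow d)}\Bigr)^{B(\BC\downarrow c)}\\
&\cong\ \int_{(c,d)}\diagX(c,d)^{B(\BC\downarrow c)\times B(\BD\downarrow d)}
\ \cong\ \holim_{\BC\times\BD}\diagX,
\end{align*}
and the computation with the roles of $\BC$ and $\BD$ interchanged would give the remaining term. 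The bookkeeping point to check here is that $\holim$ preserves pointwise fibrancy, so that $\holim_\BD\diagX$ really is a pointwise fibrant $\BC$-diagram and the outer $\holim_\BC$ is the homotopically correct construction.

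Finally I would recast the argument in the cosimplicial-replacement picture to isolate the substantive ingredient: fact (i) identifies $\prod^\bullet_{\BC\times\BD}\diagX$, as a cosimplicial space, with the \emph{diagonal} of the bicosimplicial space $(s,t)\mapsto\prod^s_\BC\prod^t_\BD\diagX$, and since arbitrary products commute with $\mathrm{Tot}$ one gets $\holim_\BC\holim_\BD\diagX\cong\mathrm{Tot}_s\mathrm{Tot}_t$ of that bicosimplicial space. The sole remaining --- and genuinely homotopical --- step is then that, for a bicosimplicial space fibrant in each direction, $\mathrm{Tot}$ of the diagonal agrees with the iterated $\mathrm{Tot}$ in either order; this Fubini statement for totalisations is exactly \cite[XI, 4.3]{BousfieldKan}, and I expect it to be the main obstacle. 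Everything preceding it is formal reshuffling of ends, products and nerves, whereas this step needs a real homotopical argument (for instance a comparison of skeletal filtrations) and is where the fibrancy hypotheses are used in an essential way. In keeping with the blackbox stance of this section, the cleanest write-up carries out the categorical reductions (i)--(iii) and then simply quotes the cited Bousfield--Kan result.
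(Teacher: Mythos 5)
Your proposal is correct, but it is worth saying how it sits relative to the paper: the paper does not prove this proposition at all — it is one of the blackboxed properties of $\holim$ and is simply quoted from Bousfield--Kan \cite[XI, 4.3]{BousfieldKan} — so any actual argument you give is already "more" than the paper does. Your end-formula computation is in fact a complete proof on its own: the isomorphism $(\BC\times\BD)/(c,d)\cong(\BC/c)\times(\BD/d)$, the compatibility of the nerve with products, the exponential law, and Fubini for ends give \emph{natural isomorphisms} $\holim_\BC\holim_\BD\diagX\cong\holim_{\BC\times\BD}\diagX\cong\holim_\BD\holim_\BC\diagX$ of the explicit Bousfield--Kan models, with fibrancy entering only to guarantee that the iterated construction is homotopically meaningful (and indeed $\holim$ of a pointwise fibrant diagram is fibrant). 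This is essentially how Bousfield--Kan themselves obtain XI, 4.3, which is the $\holim$ interchange statement itself rather than a Tot-of-the-diagonal comparison. Consequently your closing paragraph slightly misplaces the content: the passage through the cosimplicial replacement and the claim that the "genuinely homotopical" step is a Fubini theorem for $\mathrm{Tot}$ of a bicosimplicial space is unnecessary (and that step, like its dual for realizations of bisimplicial sets, is again a formal isomorphism rather than a homotopical argument). So either stop after the end computation, or drop the end computation and just cite \cite[XI, 4.3]{BousfieldKan} as the paper does; the hybrid framing overstates what remains to be proved.
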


We also need to be able to compare diagrams of different
shape, i.e.: where the base categories are different. The result
turns out to be easier to state in the context of posets than for
small categories, and this suffices for us \cite[XI, 9.2]{BousfieldKan}:

\begin{proposition}[Cofinality]
\label{prop:holimcofinality}
Let $f\colon \BP_2 \ra \BP_1$ be a map of posets. 
\begin{enumerate}
\item[(i)] Let $\diagX\colon
\BP_1\ra \spaces$ 
be a diagram of spaces 
and suppose
that for any $x\in \BP_1$ 
the poset $f^{-1}\{y\in \BP_1 \mid y\leq x\} \subset \BP_2$ is contractible. Then
$\holim_{{\BP_2}}\diagX\circ f \simeq  \holim_{\BP_1}
\diagX$.
\item[(ii)] Let $\diagX\colon
\BP_1^\op\ra \spaces$ 
be a diagram of spaces 
and suppose that for any $x\in \BP_1$ 
the poset $f^{-1}\{y\in \BP_1 \mid y\geq x\} \subset \BP_2$ is contractible. Then
$\holim_{{\BP_2}^\op}\diagX\circ f \simeq  \holim_{\BP_1^\op}
\diagX$.
\end{enumerate}
\end{proposition}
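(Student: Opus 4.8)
The plan is to deduce both parts directly from the Bousfield--Kan cofinality theorem \cite[XI, 9.2]{BousfieldKan}; the only real work is translating its hypothesis --- contractibility of certain comma categories --- into the stated conditions on the down-sets $f^{-1}\{y\leq x\}$ and up-sets $f^{-1}\{y\geq x\}$. Recall the shape of that theorem: given a functor $f\colon\BC'\ra\BC$ and a diagram $\diagX\colon\BC\ra\spaces$, the natural map $\holim_{\BC}\diagX\ra\holim_{\BC'}\diagX\circ f$ (the one induced by $f$, as recorded above) is a weak equivalence provided that, for every object $x\in\BC$, the comma category $f\downarrow x$ --- objects the pairs $(c,\,f(c)\ra x)$, morphisms the maps $c\ra c'$ of $\BC'$ commuting with the structure maps to $x$ --- has contractible nerve. (This invokes the running assumption that $\diagX$ takes fibrant values, which holds for the diagrams of Eilenberg-Mac Lane spaces under consideration.)

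First I would observe that when $\BP_1,\BP_2$ are posets and $f$ is order-preserving, the comma category $f\downarrow x$ \emph{is} the full sub-poset $f^{-1}\{y\in\BP_1\mid y\leq x\}$ of $\BP_2$: in a poset there is at most one morphism between two objects, so an object of $f\downarrow x$ is determined by its first coordinate $p\in\BP_2$, which may be any element with $f(p)\leq x$, and a (necessarily unique, necessarily compatible) morphism $p\ra p'$ exists exactly when $p\leq p'$ in $\BP_2$. Hence the nerve of $f\downarrow x$ is the order complex of $f^{-1}\{y\leq x\}$, and the hypothesis of (i) is precisely the hypothesis of \cite[XI, 9.2]{BousfieldKan}; applying that theorem to $f$ and $\diagX$ gives the conclusion $\holim_{\BP_2}\diagX\circ f\simeq\holim_{\BP_1}\diagX$, which is (i).

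For (ii) I would apply (i) to the opposite posets. An order-preserving $f\colon\BP_2\ra\BP_1$ is, on the same underlying function, an order-preserving map $\BP_2^\op\ra\BP_1^\op$, and $\diagX\colon\BP_1^\op\ra\spaces$ is a diagram over $\BP_1^\op$; so (i) yields $\holim_{\BP_2^\op}\diagX\circ f\simeq\holim_{\BP_1^\op}\diagX$ as soon as, for each $x$, the sub-poset $f^{-1}\{y\mid y\leq x\text{ in }\BP_1^\op\}$ of $\BP_2^\op$ is contractible. But ``$y\leq x$ in $\BP_1^\op$'' means ``$y\geq x$ in $\BP_1$'', so this set is $f^{-1}\{y\in\BP_1\mid y\geq x\}$; and since a poset $P$ and its opposite have canonically homeomorphic order complexes (reverse the vertex order on each simplex), its contractibility as a sub-poset of $\BP_2^\op$ is equivalent to its contractibility as a sub-poset of $\BP_2$ --- which is exactly the hypothesis of (ii).

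There is no genuine homotopy-theoretic content here beyond the cited theorem; the one thing to be careful about is variance --- making sure that the comma categories of \cite[XI, 9.2]{BousfieldKan}, stated there for arbitrary small categories, specialise to the down-sets $f^{-1}\{y\leq x\}$ rather than to the up-sets, and keeping track of the reindexing that passes from $\diagX\circ f$ to $\diagX\circ f^\op$ hidden in the notation of part (ii).
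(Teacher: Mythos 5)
Your proposal is correct and takes essentially the same route as the paper, which offers no independent argument but simply cites Bousfield--Kan XI.9.2; your write-up just supplies the routine translation that citation presupposes, namely identifying the comma category $f\downarrow x$ with the sub-poset $f^{-1}\{y\le x\}$ when $f$ is a poset map, and deducing (ii) from (i) by passing to opposite posets (the paper itself remarks that (ii) is merely a restatement of (i)).
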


Here a poset is contractible if its
geometric realisation $B\BP$ is contractible, so
in particular $B\BP$, and hence $\BP$, is non-empty. 
For example if $\BP$ has an extremal (i.e.: maximal or minimal)
element then $B\BP$ is a cone. Statement  (ii) above is simply a
restatement of (i), but the potential confusion in taking
opposites makes it worth while stating both.

For a simple application of Proposition \ref{prop:holimcofinality}
let $\BP_2$ be a contractible poset and
$\diagX$ the constant diagram over $\BP_2$ having value the space $X$
at each $x$ and the identity map $X\ra X$ at each morphism $x\ra
y$. Let $\BP_1$ be the single element poset and $\diagY$ the diagram
having value $X$ at this single element. If $f:\BP_2\ra\BP_1$ is the
only possible map, then $\diagX=\diagY\circ f$ and the conditions of
Proposition \ref{prop:holimcofinality} are satisfied. Thus
$\holim\diagX\simeq\holim\diagY\simeq X$.


The final basic property of holim is that it commutes with mapping
spaces (of pointed maps between pointed spaces) -- see \cite[XI, 7.6]{BousfieldKan}:

\begin{proposition}[Mapping]
\label{prop:holimmapping}
Let $\diagX$ be a diagram of spaces in $\ptdiagc$ and let $Y$ be a (pointed) space. Then
$$
\Map(Y, \holim\diagX)
\simeq 
\holim \Map (Y,\diagX).
$$
\end{proposition}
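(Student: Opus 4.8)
The plan is to exploit the fact that, in the Bousfield--Kan model, $\holim_{\BC}$ is assembled entirely out of products and cotensors by simplicial sets, both of which are preserved by $\Map(Y,-)$; the asserted equivalence will in fact come out as an honest natural isomorphism. Recall \cite[XI, 5.1]{BousfieldKan} that $\holim_{\BC}\diagX=\mathrm{Tot}\bigl(\textstyle\prod^{\bullet}\diagX\bigr)$, the totalisation of the cosimplicial replacement, whose $n$-th term is
$$
\textstyle\prod^{n}\diagX=\prod_{x_0\to\cdots\to x_n}\diagX(x_n),
$$
the product over strings of $n$ composable morphisms of $\BC$, with cofaces and codegeneracies induced by the functoriality of $\diagX$ together with insertion and deletion of objects; recall also that $\mathrm{Tot}$ of a cosimplicial space $Z^{\bullet}$ is an end over $\Delta$ built out of the cotensors $(Z^{n})^{\Delta^{n}}$. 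These facts, and the naturality of the construction, are the only internal features of holim we shall use.

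Next, the functor $\Map(Y,-)\colon\spaces\to\spaces$ is right adjoint to $Y\wedge(-)$, so it preserves all limits --- in particular products and ends --- and it commutes with cotensoring by a simplicial set: $\Map(Y,Z^{K})\cong\Map(Y,Z)^{K}$, naturally in the pointed space $Z$ and the simplicial set $K$. This last isomorphism is an instance of the exponential law, proved by identifying the $n$-simplices of both sides with maps of pointed spaces $Y\wedge(K\times\Delta^{n})_{+}\to Z$ (using the two adjunctions $K\times(-)\dashv(-)^{K}$ on spaces and $Y\wedge(-)\dashv\Map(Y,-)$ on pointed spaces).

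Assembling these: applying $\Map(Y,-)$ objectwise to $\prod^{\bullet}\diagX$ and commuting it past each product gives, for each $n$, a natural isomorphism
$$
\Map\bigl(Y,\textstyle\prod^{n}\diagX\bigr)\;\cong\;\prod_{x_0\to\cdots\to x_n}\Map\bigl(Y,\diagX(x_n)\bigr)\;=\;\textstyle\prod^{n}\Map(Y,\diagX),
$$
where $\Map(Y,\diagX)$ denotes the diagram $x\mapsto\Map(Y,\diagX(x))$ obtained by post-composing $\diagX$ with $\Map(Y,-)$; because $\Map(Y,-)$ is applied objectwise these isomorphisms respect the cosimplicial structure maps and so assemble into an isomorphism of cosimplicial spaces. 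Commuting $\Map(Y,-)$ through $\mathrm{Tot}$ --- which it does, since it preserves the end over $\Delta$ and the cotensors $(-)^{\Delta^{n}}$ occurring in it --- then yields a natural isomorphism
$$
\Map\bigl(Y,\holim_{\BC}\diagX\bigr)\;\cong\;\mathrm{Tot}\bigl(\textstyle\prod^{\bullet}\Map(Y,\diagX)\bigr)\;=\;\holim_{\BC}\Map(Y,\diagX).
$$
Finally, $\Map(Y,Z)$ is a Kan complex whenever $Z$ is one, so $\Map(Y,\diagX)$ is again a diagram of fibrant objects and the right-hand side is a bona fide homotopy limit; the displayed isomorphism then gives the asserted equivalence.

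The only step that is not purely formal is the pointed/unpointed bookkeeping in the exponential law: $Z^{K}$ is the cotensor by an \emph{unpointed} simplicial set $K$, while $\Map$ is the \emph{pointed} mapping space. But once the two adjunctions above are written out explicitly this identification is forced, and everything else reduces to the slogan that right adjoints preserve limits. (The statement is of course \cite[XI, 7.6]{BousfieldKan}; the above is how I would reconstruct the proof.)
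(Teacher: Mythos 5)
The paper does not actually prove this statement: Proposition \ref{prop:holimmapping} is one of the black-boxed properties of $\holim$, quoted directly from Bousfield--Kan \cite[XI, 7.6]{BousfieldKan}, so there is no in-paper argument to compare against. Your reconstruction is correct and is essentially the standard argument behind the cited result: in the Bousfield--Kan model $\holim_{\BC}$ is $\mathrm{Tot}$ of the cosimplicial replacement, hence is built entirely from products, cotensors by simplicial sets, and an end over $\Delta$; the right adjoint $\Map(Y,-)$ preserves all of these strictly, and your pointed/unpointed exponential-law bookkeeping ($\Map(Y,Z^{K})\cong\Map(Y,Z)^{K}$ via maps $Y\wedge(K\times\Delta^{n})_{+}\to Z$) is exactly the point that needs checking. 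Note that your argument in fact yields a natural \emph{isomorphism}, which is stronger than the stated equivalence, and your remark that $\Map(Y,-)$ preserves fibrancy is what makes the right-hand side a genuine homotopy limit under the paper's standing fibrancy conventions. A small simplification is available: one can bypass the cosimplicial replacement altogether, since Bousfield--Kan define $\holim_{\BC}\diagX$ directly as the end $\int_{x}\diagX(x)^{B(\BC\downarrow x)}$, to which the same two facts (right adjoints preserve ends; the exponential law for cotensors) apply in one step; this is essentially how the result is obtained in \cite{BousfieldKan}.
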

Here $\Map(Y, -)$ is the functor that takes a pointed space $Z$ to
the space of pointed maps from $Y$ to $Z$ and $\Map (Y,\diagX)\in
\ptdiagc$ is the composition $\Map(Y, -)\circ \diagX$.

\subsection{Spaces for Khovanov homology}\label{sec:spaces}

Bousfield and Kan give an interpretation of derived functors of the
inverse limit as follows. Consider the
Eilenberg-Mac Lane functor $K(-,n):\ab\ra\spaces$ for which we adopt the
construction given by Weibel \cite[8.4.4]{Weibel94} where 
there is an obvious
choice of basepoint for $K(A,n)$. For more details on Eilenberg-Mac Lane
spaces see May \cite[Chapter V]{May} or Hatcher \cite[Chapter 4]{Hatcher}.
The following proposition \cite[XI, 7.2]{BousfieldKan} gives an interpretation of
$\invlim^i F$ in homotopy theoretic terms where $\BC$ is a small category.

\begin{proposition}
\label{prop:piholim}
Let $F\colon \BC \ra \ab$ be a (covariant) functor.
Then there are natural isomorphisms:
$$
\pi_i(\holim_\BC K(-,n) \circ F) \cong 
\begin{cases}
\invlim^{n-i}_\BC F& 0\leq i \leq n,\\
0 & \text{ else.}
\end{cases}
$$
\end{proposition}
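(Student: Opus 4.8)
The plan is to deduce this from the Bousfield--Kan homotopy spectral sequence of a homotopy limit, which for a diagram of Eilenberg--Mac Lane spaces degenerates immediately. Recall that for any diagram $\diagX\colon\BC\ra\spaces$ of fibrant spaces there is a (cohomologically indexed) spectral sequence, natural in $\diagX$, of the form
$$
E_2^{s,t}\;=\;\textstyle\invlim^s_\BC\bigl(x\mapsto\pi_t(\diagX(x))\bigr)\ \Longrightarrow\ \pi_{t-s}(\holim_\BC\diagX),
$$
as constructed in \cite{BousfieldKan}. I would apply this to $\diagX=K(-,n)\circ F$. Each value $\diagX(x)=K(F(x),n)$ is (in the model of \cite[8.4.4]{Weibel94}) a simplicial abelian group, hence fibrant, connected and simple, so the spectral sequence is available and the usual fringe effects at $t\le 1$ do not intervene; for $t=0$ the homotopy limit of the diagram of discrete abelian groups is literally $\invlim$, in agreement with the edge of the spectral sequence.

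Next I would read off the $E_2$-page. Since $\pi_t(K(A,n))=A$ for $t=n$ and $0$ otherwise, the coefficient functor $x\mapsto\pi_t(\diagX(x))$ equals $F$ when $t=n$ and is the zero functor for $t\ne n$; hence $E_2^{s,t}=\invlim^s_\BC F$ for $t=n$ and $E_2^{s,t}=0$ for $t\ne n$. The whole page is concentrated in the single row $t=n$, and every differential $d_r$ with $r\ge 2$ moves the row index $t$, so it has either zero source or zero target. Thus $E_2=E_\infty$, and because only one row survives the spectral sequence converges cleanly: in each nonnegative total degree $i$ exactly one filtration quotient, $E_\infty^{\,n-i,\,n}$, is nonzero. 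Therefore $\pi_i(\holim_\BC K(-,n)\circ F)\cong\invlim^{n-i}_\BC F$ when $n-i\ge 0$, i.e. for $0\le i\le n$, and $0$ otherwise; naturality is inherited from that of the spectral sequence. This is exactly the asserted formula.

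There is a more hands-on variant that I would mention as a cross-check and which ties in with the preceding subsection. Using the cosimplicial replacement, $\holim_\BC(K(-,n)\circ F)=\mathrm{Tot}\,\prod^\bullet(K(-,n)\circ F)$, and since $K(-,n)$ preserves products this is $\mathrm{Tot}\,K(\prod^\bullet F,n)$, where $\prod^\bullet F$ is the cosimplicial abelian group whose conormalization is (quasi-isomorphic to) the cochain complex $C^*(B\BC,F)$ of \S\ref{subsection:classifyingspaces}. For a cosimplicial abelian group $A^\bullet$, the generalized Dold--Kan theorem identifies $\mathrm{Tot}\,K(A^\bullet,n)$ with the space attached to the total complex of the bicomplex that is $A^\bullet$ in row $n$ and zero elsewhere, i.e. with a space whose $i$-th homotopy group is $H^{n-i}(A^\bullet)$. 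Feeding in Proposition \ref{prop:limcat}, which gives $H^*(A^\bullet)\cong H^*(B\BC,F)\cong\invlim^*_\BC F$, recovers the statement.

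I expect the genuine obstacle to be bookkeeping rather than mathematics. One must pin down the grading conventions so that the collapse really places $\invlim^{n-i}$ (and not $\invlim^{i-n}$) in $\pi_i$ and so that the range $0\le i\le n$ comes out correctly; and one must be candid that the heavy lifting --- the existence and convergence of the Bousfield--Kan spectral sequence, or equivalently the generalized Dold--Kan statement used in the variant --- is imported wholesale from \cite{BousfieldKan}. The one point worth checking is that these machines genuinely apply here, and they do precisely because we work with diagrams of simplicial abelian groups, where connectivity, simplicity and abelian-ness of all homotopy groups hold automatically, so no convergence or basepoint subtleties arise and the application is immediate.
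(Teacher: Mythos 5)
Your argument is correct, but it is worth saying that the paper does not prove this proposition at all: it is quoted verbatim as \cite[XI, 7.2]{BousfieldKan}, in keeping with the declared ``blackbox'' treatment of homotopy limits. What you supply is essentially the standard proof of that cited statement: the Bousfield--Kan homotopy spectral sequence $E_2^{s,t}=\invlim^s_\BC\pi_t(\diagX)\Rightarrow\pi_{t-s}(\holim_\BC\diagX)$ applied to $\diagX=K(-,n)\circ F$, which is concentrated in the single row $t=n$ and therefore collapses, giving $\pi_i\cong\invlim^{n-i}_\BC F$ for $0\le i\le n$ and $0$ otherwise. Your handling of the usual caveats is adequate: since each $\diagX(x)$ is a simplicial abelian group and the spaces are $n$-truncated, the fibres of the Tot tower vanish above filtration $n$, so the tower is eventually constant in each degree, complete convergence holds, and the fringe/basepoint issues genuinely do not intervene; the only residual bookkeeping is the indexing, which you flag. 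Your second variant --- cosimplicial replacement, $K(-,n)$ commuting with products, Dold--Kan, and then Proposition \ref{prop:limcat} identifying the cohomology of $C^*(B\BC,F)$ with $\invlim^*_\BC F$ --- is also sound and is in fact closer in spirit to what the paper does elsewhere (Proposition \ref{prop:prodEM} and its pointer to \cite{EverittLipshitzSarkarTurner} for a self-contained argument). The trade-off is clear: the paper keeps the machine entirely in the box by citing the exact statement, while your route makes visible why the isomorphism holds and where convergence could fail, at the cost of importing the spectral sequence (or the generalized Dold--Kan correspondence) wholesale from the same source.
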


The spaces $\holim_\BC K(-,n) \circ F $ contain no more information
than higher derived functors of $F$.
 Indeed, as a consequence of the Dold-Kan theorem (see
 \cite[Section 8.4]{Weibel94} or Curtis \cite[Section 5]{Curtis71}) we have:

\begin{proposition}
\label{prop:prodEM}
 For $n$ big enough the space $\holim_\BC K(-,n) \circ F $ has the
 homotopy type of a product of 
Eilenberg-Mac Lane spaces: 
$$
\holim_\BC K(-,n) \circ F  \simeq \prod_m K( {\invlim}^{n-m}_\BC F, m).
$$
\end{proposition}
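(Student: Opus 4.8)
The plan is to recognise $Y:=\holim_\BC K(-,n)\circ F$ as the underlying space of a \emph{simplicial abelian group} and then apply the structure theory of such objects. First I would observe that in Weibel's model \cite[8.4.4]{Weibel94} the Eilenberg--Mac Lane functor $K(-,n)$ already takes values in simplicial abelian groups --- it is the Dold--Kan image of the chain complex with $A$ concentrated in degree $n$ --- so $K(-,n)\circ F\colon\BC\to\spaces$ is a diagram of simplicial abelian groups, objectwise fibrant since simplicial abelian groups are Kan (this is exactly the fibrancy flagged in the remark after Proposition \ref{prop:holimmapping}). The Bousfield--Kan homotopy limit of this diagram is the total object of its cosimplicial replacement $\prod^\bullet(K(-,n)\circ F)$, which here is a cosimplicial object \emph{in} simplicial abelian groups; because the forgetful functor to $\spaces$ is a right adjoint and $\holim$ is assembled from products, cotensors with the standard simplices, and equalizers, the space $Y$ is canonically the underlying space of a simplicial abelian group, and the objectwise fibrancy means this is the homotopically correct $\holim$.

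Next I would invoke the Dold--Kan correspondence, in the form recorded in \cite[\S8.4]{Weibel94} (see also Curtis \cite[\S5]{Curtis71}): under it $Y$ corresponds to a connective chain complex $C_*$ of abelian groups with $H_m(C_*)\cong\pi_m(Y)$. The essential input is the formality of chain complexes over $\bZ$: since $\bZ$ has global dimension one, $C_*$ is quasi-isomorphic to its homology $\bigoplus_m H_m(C_*)[m]$ taken with zero differential. Concretely, after replacing $C_*$ by a quasi-isomorphic connective complex of free abelian groups, every cycle and boundary subgroup is free, the short exact sequences $0\to Z_m\to C_m\to B_{m-1}\to 0$ split, and $C_*$ decomposes as a direct sum of two-term pieces each with homology a single $H_m(C_*)$; this is precisely the ``product of Eilenberg--Mac Lane spaces'' content attached to Dold--Kan. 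Transporting a quasi-isomorphism of connective complexes back through Dold--Kan is a weak equivalence of simplicial abelian groups, and the simplicial abelian group of $\bigoplus_m\pi_m(Y)[m]$ is exactly $\prod_m K(\pi_m(Y),m)$; hence $Y\simeq\prod_m K(\pi_m(Y),m)$, a weak equivalence of Kan complexes and so a genuine homotopy equivalence.

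Finally I would substitute the homotopy groups computed in Proposition \ref{prop:piholim}, namely $\pi_m(Y)\cong\invlim^{n-m}_\BC F$ for $0\le m\le n$ and $\pi_m(Y)=0$ otherwise, to obtain $\holim_\BC K(-,n)\circ F\simeq\prod_m K(\invlim^{n-m}_\BC F,m)$. In fact, with the convention $\invlim^k_\BC F=0$ for $k<0$, this displayed equivalence holds for every $n$; the phrase ``$n$ big enough'' only serves to guarantee that all of the nonvanishing higher limits $\invlim^k_\BC F$ actually occur, in positive homotopy degree $n-k$, so that the right-hand product records the full derived-functor information (in the situation of interest $\BC$ is a finite poset, so such an $n$ exists).

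The step I expect to be the main obstacle is the bookkeeping in the first paragraph: making it airtight that the Bousfield--Kan $\holim$ of this particular diagram is computed inside simplicial abelian groups, so that $Y$ is a bona fide generalized Eilenberg--Mac Lane object and not merely an $H$-space or infinite loop space. Everything downstream of that is a black-boxed appeal to Dold--Kan together with the formality of chain complexes over $\bZ$, and to Proposition \ref{prop:piholim}.
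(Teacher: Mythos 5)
Your argument is correct, and it is exactly the route the paper itself takes: the paper states this proposition as "a consequence of the Dold--Kan theorem," citing Weibel \S8.4 and Curtis \S5 and deferring a self-contained argument to \cite{EverittLipshitzSarkarTurner}, rather than spelling out a proof. What you have done is fill in the details of that same Dold--Kan argument — checking that the Bousfield--Kan holim is computed in simplicial abelian groups, invoking formality of connective chain complexes over $\bZ$, and then substituting Proposition \ref{prop:piholim} — so there is no substantive divergence from the paper's approach.
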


For a self contained and elementary argument proving the appropriate
result needed here we refer the reader to  
\cite{EverittLipshitzSarkarTurner}.

After these preliminaries on homotopy limits we return to
Khovanov homology. Associated to a link diagram $D$ we have the Khovanov presheaf  
$F_{KH} \colon \BQop \ra \ab $ of \S\ref{subsection:boolean}. Let $n\in\bN$ 
and let $\scrF_n\colon
\BQop \ra
\ptspaces $ be the diagram of spaces defined by
$\scrF_n=K(-,n)\circ F_{KH}$, the composition of 
$F_{KH}$ with the Eilenberg-Mac Lane space functor  $K(-,n)$.
We can now define a space
$\YYn D$ as the homotopy limit of this diagram:
$$
\YYn D = \holim_{\BQop}\scrF_n=\holim_{\BQop} K(-,n)\circ F_{KH} . 
$$

\begin{remark}\label{rem:goodwillie}
The homotopy limit above, taken over the augmented Boolean
lattice $\BQ$, is what Goodwillie \cite{Goodwillie92}, in his theory of calculus of
functors, calls the {\em total fiber} of the
(decorated) Boolean lattice $\bB$.  
\end{remark}

From Theorem \ref{thm:invlim}  and Proposition \ref{prop:piholim} we
see that 
$\YYn D$ is a space whose homotopy groups are isomorphic to the
unnormalised Khovanov homology of $D$:

\begin{proposition} 
\label{prop:homotopygroups}
\begin{equation}
  \label{eq:5}
  \pi_i(\YYn {D}) \cong 
\begin{cases}
\bkh {n-i} ({D}) & 0\leq i \leq n,\\
0 & \text{ else.}
\end{cases}
\end{equation}
\end{proposition}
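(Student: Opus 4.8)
The plan is to deduce Proposition \ref{prop:homotopygroups} by simply combining the two main ingredients already assembled in the paper: Theorem \ref{thm:invlim}, which identifies $\bkh i(D)$ with ${\invlim_{\BQop}}^i F_{KH}$, and Proposition \ref{prop:piholim}, which computes the homotopy groups of a homotopy limit of Eilenberg--Mac Lane spaces in terms of higher inverse limits. There is essentially nothing left to prove beyond tracking the indices correctly, so the ``proof'' is really a bookkeeping exercise.

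First I would recall that $\YYn D$ is \emph{defined} as $\holim_{\BQop} K(-,n)\circ F_{KH} = \holim_{\BQop}\scrF_n$, so it is precisely of the form to which Proposition \ref{prop:piholim} applies, with the small category $\BC$ taken to be $\BQop$ and the functor $F\colon\BQop\ra\ab$ taken to be $F_{KH}$. Applying that proposition verbatim yields
$$
\pi_i(\YYn D)\cong
\begin{cases}
{\invlim^{n-i}_{\BQop}} F_{KH} & 0\le i\le n,\\
0 & \text{else.}
\end{cases}
$$
Then I would invoke Theorem \ref{thm:invlim}, which gives the natural isomorphism ${\invlim^{j}_{\BQop}} F_{KH}\cong \bkh j(D)$ for every $j\ge 0$; substituting $j=n-i$ into the first case (where $0\le n-i\le n$) replaces the right-hand side with $\bkh{n-i}(D)$, which is exactly \eqref{eq:5}.

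I do not anticipate a genuine obstacle here; the only point requiring any care is that Proposition \ref{prop:piholim} is stated for a covariant functor $F\colon\BC\ra\ab$, and one must make sure that $F_{KH}\colon\BQop\ra\ab$ is being fed in with the correct variance — i.e.\ that ``$\BC$'' in Proposition \ref{prop:piholim} is instantiated as $\BQop$ (as a category in its own right), so that $F_{KH}$ is a genuine covariant functor out of it, and that the homotopy limit in Proposition \ref{prop:piholim} matches the $\holim_{\BQop}$ appearing in the definition of $\YYn D$. Once that identification is made the statement follows immediately, so the write-up is a two-line deduction.
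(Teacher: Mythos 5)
Your argument is exactly the paper's: Proposition \ref{prop:homotopygroups} is obtained there by applying Proposition \ref{prop:piholim} to the diagram $\scrF_n=K(-,n)\circ F_{KH}$ over $\BQop$ and then substituting Theorem \ref{thm:invlim} to identify $\invlim^{n-i}_{\BQop}F_{KH}$ with $\bkh{n-i}(D)$. Your index bookkeeping and the variance remark are both correct, so nothing is missing.
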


Indeed by Proposition \ref{prop:prodEM} we have
$$
\YYn D \simeq \prod_{m} K(\bkh {n-m}(D),m).
$$

In order to normalise Khovanov homology a global degree shift is
applied. As $\pi_{i} \Omega X\cong \pi_{i+1}X$ for a pointed space $X$, 
we see that degree shifts are implemented at the space
level by taking loop spaces.  Suppose
now $D$ is oriented and has $c$ negative crossings.
The collection of  spaces $\YY = \{ \YYn D\}$  is an $\Omega$-spectrum
which may be delooped $c$ times to obtain a new $\Omega$-spectrum
$\XX D= \Omega^{-c}\YY D$ whose
homotopy groups are normalized Khovanov homology:
$$
\pi_i(\XX D) \cong KH^{-i}(D).
$$

\section{Diagrams over Boolean lattices and homotopy limits}

This section develops some results on the homotopy limits of diagrams defined over
(modified) Boolean lattices. These will then provide tools applicable to
Khovanov homology, and in the next section we illustrate this with
homotopy theoretic proofs of some Khovanov
homology results. In light of the Remark after Prosposition \ref{prop:prodEM} some of the
conclusions of this
section are consequences of Goodwillie's calculus of functors, but we
prefer to (re)prove the results we need in a self-contained manner. 

We make extensive use of homotopy fibres and so record here some of their
properties. Given a map (of pointed spaces)
$f\colon X\ra Y$ we define the  {\em homotopy
   fibre} of $f$ as a homotopy limit by
\begin{eqnarray}
  \label{eq:2}
\hofibre(X\stackrel{f}{\longrightarrow}Y)
=
\holim(X\stackrel{f}{\longrightarrow}Y\longleftarrow\star).
\end{eqnarray}

By lifting the lid of the black box only a fraction (see
\cite[Chapter XI]{BousfieldKan}) one sees that this has the homotopy 
type of the usual homotopy fibre: namely defining
\begin{equation}
  \label{eq:9}
E_f=\{ (x,\alpha) \mid x\in X,  \alpha \colon [0,1]\ra Y \text{ a
  continuous map such that } \alpha(0) = f(x)\},
\end{equation}
then this is a space homotopy equivalent to $X$ and the map  $E_f\ra Y$ sending
 $(x,\alpha) \mapsto \alpha(1)$ is a fibration whose fibre is homotopy
equivalent to the hofibre (\ref{eq:2}).

Relevant examples of homotopy fibres are
\begin{eqnarray}
  \label{eq:1}
  \hofibre (X\ra \star) \simeq X 
\end{eqnarray}
\begin{eqnarray}
  \label{eq:3}
  \hofibre (\star \ra Y)\simeq \Omega Y.
\end{eqnarray}

Using the long exact homotopy sequence for a fibration and the
Whitehead 
theorem one immediately gets:

\begin{lemma}\label{lem:hofibre}
For $Y$ connected, if $\hofibre (X\ra Y) \simeq \star$ then $X\simeq Y$. 
\end{lemma}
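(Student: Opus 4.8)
The plan is to replace $f\colon X\ra Y$ by the fibration constructed immediately above the statement. By \eqref{eq:9} the space $E_f$ is homotopy equivalent to $X$, the map $p\colon E_f\ra Y$, $(x,\alpha)\mapsto\alpha(1)$, is a fibration, and its fibre $p^{-1}(\star)$ is homotopy equivalent to $\hofibre(X\ra Y)$, which is contractible by hypothesis. So it suffices to show that a fibration with contractible fibre over a connected base is a weak homotopy equivalence, and then to transport this conclusion back along the equivalence $X\simeq E_f$.

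First I would run the long exact homotopy sequence of the fibration $p^{-1}(\star)\ra E_f\xrightarrow{\ p\ }Y$. Since $p^{-1}(\star)\simeq\star$, all of its homotopy groups vanish and $\pi_0$ is a point, so exactness forces $p_*\colon\pi_n(E_f)\ra\pi_n(Y)$ to be an isomorphism for every $n\geq 1$ (here one uses that $\pi_{n}(p^{-1}(\star))=0$ on both sides of the relevant portion of the sequence). The tail $\star=\pi_0(p^{-1}(\star))\ra\pi_0(E_f)\ra\pi_0(Y)=\star$, using that $Y$ is connected, shows that $E_f$ is connected as well. Hence $p$ is a weak homotopy equivalence. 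Composing with a homotopy inverse of the equivalence $X\simeq E_f$ gives a weak homotopy equivalence $X\ra Y$ (homotopic to $f$), and since in our setting all objects are simplicial sets — equivalently all spaces have the homotopy type of CW complexes, per the conventions recalled in the Remark in \S\ref{section:holims} — the Whitehead theorem upgrades this to an honest homotopy equivalence $X\simeq Y$.

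The only point that needs any care, and the only place the connectedness hypothesis enters, is the bottom of the long exact sequence: without $Y$ connected one cannot conclude that $E_f$ (equivalently $X$) is connected, and the passage from weak equivalence to homotopy equivalence cannot be run componentwise for free. Everything else is a direct reading off the fibration sequence \eqref{eq:2} together with \eqref{eq:9}, so I expect no genuine obstacle beyond this low-dimensional bookkeeping.
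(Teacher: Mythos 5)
Your argument is correct and is exactly the paper's proof: the paper disposes of this lemma in one line by citing the long exact homotopy sequence of a fibration (with the hofibre realised as the fibre of $E_f\ra Y$) together with the Whitehead theorem, which is precisely what you have spelled out. The extra care you take with $\pi_0$ and the connectedness of $Y$ is the only nontrivial bookkeeping, and you handle it correctly.
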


If $f\colon X\ra Y$ is a map of pointed spaces with $Y$ 
contractible,
then (\ref{eq:1}) extends to 
\begin{eqnarray}
  \label{eq:7}
  \hofibre (X\stackrel{f}{\longrightarrow}Y)\simeq X,
\end{eqnarray}
and similarly if $X$ is contractible then (\ref{eq:3})  extends to
 \begin{eqnarray}
  \label{eq:6}
   \hofibre (X\stackrel{f}{\longrightarrow}Y)\simeq \Omega Y.
\end{eqnarray}

From now on we assume that $\catC$ is a connected category.  
Given diagrams $\diagX, \diagY\in\ptdiagc$ (of pointed spaces) and a morphism 
$f\colon \diagX\ra \diagY$ one may form the homotopy fibre diagram 
$\hof(f)$ by (locally) defining $\hof(f)(x) = \hofibre (f_x\colon
\diagX(x) \ra \diagY(x))$
and $\hof(f)(x\ra y):\hofibre(f_x)\ra\hofibre(f_y)$ the map induced by
taking homotopy limits of the two rows of the diagram
$$
\xymatrix{\diagX(x) \ar[r]\ar[d] & \diagY(x)\ar[d] & \star \ar[l]\ar[d]\\\diagX(y) \ar[r] & \diagY(y) &
  \star \ar[l]}
$$
with the lefthand square commuting courtesy of $f$. 
As the next result shows, one can identify the homotopy limit of $\hof(f)$ with the homotopy
fibre 
of the the map $\bar{f}$. In the interest of completeness we have included the details, but the main point is that the homotopy fibre is an example of a homotopy limit and homotopy limits enjoy the Product property of Proposition \ref{prop:holimproduct}:

\begin{proposition} \label{prop:hofibre}
 Let $f\colon \diagX\ra \diagY$ be a morphism in $\ptdiagc $. Then
$$
\holim (\hof(f)) 
\simeq 
\hofibre (\holim\diagX\stackrel{\bar{f}}{\longrightarrow}\holim\diagY).
$$
\end{proposition}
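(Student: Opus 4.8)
The plan is to realize $\hof(f)$ as a diagram over the product category $\catC \times \BP$, where $\BP$ is the three-object poset $\{a \to b \leftarrow c\}$ governing homotopy pullbacks, and then apply the Product property (Proposition \ref{prop:holimproduct}) to commute the two homotopy limits. Concretely, from the morphism $f\colon \diagX \to \diagY$ I would build a diagram $\diagZ \colon \catC \times \BP \to \spaces$ by setting $\diagZ(x, a) = \diagX(x)$, $\diagZ(x,b) = \diagY(x)$, $\diagZ(x,c) = \star$, with the $\BP$-direction maps being $f_x\colon \diagX(x)\to\diagY(x)$ and the constant map $\star \to \diagY(x)$, and the $\catC$-direction maps being those of $\diagX$, $\diagY$, and the trivial diagram respectively. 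The square displayed just before the statement is precisely the naturality condition making $\diagZ$ a well-defined functor on $\catC \times \BP$ (the left square commutes by naturality of $f$, the right square trivially).

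Next I would compute $\holim_{\catC\times\BP}\diagZ$ in the two orders. Taking $\holim_\BP$ first: for each fixed $x\in\catC$, $\holim_\BP$ of the restricted diagram $\diagX(x) \to \diagY(x) \leftarrow \star$ is by definition $\hofibre(f_x) = \hof(f)(x)$, per equation (\ref{eq:2}); moreover one checks (this is essentially the construction of $\hof(f)(x\to y)$ given in the text) that the induced $\catC$-direction maps on these homotopy fibres are exactly those of the diagram $\hof(f)$. Hence $\holim_\BP \diagZ \simeq \hof(f)$ as diagrams in $\ptdiagc$, and applying $\holim_\catC$ and invoking Proposition \ref{prop:holimhomotopy} gives $\holim_{\catC\times\BP}\diagZ \simeq \holim_\catC \hof(f) = \holim(\hof(f))$. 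Taking $\holim_\catC$ first: $\holim_\catC\diagZ$ is the diagram over $\BP$ with values $\holim_\catC\diagX$, $\holim_\catC\diagY$, and $\holim_\catC(\text{trivial diagram}) \simeq \star$ (using that $\catC$ is connected, so the constant diagram on $\star$ has contractible holim), with the maps $\bar f$ and $\star \to \holim_\catC\diagY$; its $\holim_\BP$ is then $\hofibre(\holim\diagX \xrightarrow{\bar f} \holim\diagY)$ by (\ref{eq:2}) again. Proposition \ref{prop:holimproduct} identifies the two, yielding the claim.

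The step requiring the most care is verifying that the diagram $\holim_\BP\diagZ$ really agrees with $\hof(f)$ as a \emph{diagram} — that is, matching up the structure maps, not just the objectwise homotopy fibres. But this is exactly how $\hof(f)(x\to y)$ was \emph{defined} in the paragraph preceding the proposition (as the map on homotopy limits induced by the two-row commuting diagram), so the identification is really a matter of unwinding that definition against the general formula for a morphism in $\holim_\BP$ of a $\BP$-shaped diagram; there is no genuine obstacle, only bookkeeping. A minor point worth flagging is the contractibility of $\holim_\catC$ of the trivial diagram, which needs $\catC$ connected — this is exactly the standing hypothesis stated just before the definition of $\hof$, and also follows from the simple cofinality application given in the text (a connected poset, or more generally category with connected nerve, maps to the one-point category appropriately). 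With fibrancy of all objects assumed as in the standing remark, Propositions \ref{prop:holimhomotopy} and \ref{prop:holimproduct} apply on the nose and the proof concludes.
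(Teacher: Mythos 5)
Your proposal is correct and is essentially the paper's own argument: both construct the diagram $\diagZ$ over the product of $\catC$ with the three-object pullback poset and then commute the two homotopy limits via the Product property, identifying one order with $\holim(\hof(f))$ and the other with $\hofibre(\bar f)$. The only cosmetic differences are your relabelling of the poset and your (harmless, though not strictly needed) appeal to connectedness for the contractibility of the trivial diagram's holim.
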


\begin{proof}
Let $\BD$ be the three element category 
$a\stackrel{\aa}{\longrightarrow}c\stackrel{\beta}{\longleftarrow}b$.
Define 
$\diagZ\colon \BC\times\BD\ra \ptspaces$ by
$$
\diagZ(x,a) = \star \;\;\;\;\;\;\;\; \diagZ(x,b) = \diagX (x) \;\;\;\;\;\;\;\;\diagZ(x,c) = \diagY (x).
$$
On morphisms let $\diagZ((id,\alpha)\colon (x,a) \ra (x, c)) = \star
\ra \diagY(x)$,
$\diagZ((id,\beta)\colon (x,b) \ra (x, c)) =  f_x$
and
$$
\diagZ((\theta,z\stackrel{1}{\ra}z)\colon (x,z) \ra (x^\prime, z)) 
= 
\left\{
\begin{array}{ll}
\star\ra\star,&z=a,\\
\diagX (\theta),&z=b,\\
\diagY (\theta),&z=c.
\end{array}\right.
$$
We then have
\begin{eqnarray*}
 \holim_{\BD} \diagZ(x,-) & = & 
\holim (
\diagX(x)\stackrel{f_x}{\longrightarrow}\diagY(x)\longleftarrow\star
)\\
& =&\hofibre (f_x\colon\diagX(x) \ra \diagY(x))\;\;\;\;\text{ [by (\ref{eq:2})]}\\
& = & \hof (f)(x)
\end{eqnarray*}
from which we get
$\holim_{\BC}\holim_{\BD} \diagZ \simeq \holim\hof(f)$.
Going the other way we have 
$
\holim_{\BC}\diagZ(-,a)  \simeq \star$, 
$\holim_{\BC}\diagZ(-,b) = \holim_{\BC} \diagX$, and 
$\holim_{\BC}\diagZ(-,c) = \holim_{\BC} \diagY,
$
so
\begin{eqnarray*}
  \holim_{\BD} \holim_{\BC} \diagZ  & = &  
\holim (
\holim\diagX\stackrel{\bar{f}}{\longrightarrow}\holim\diagY\longleftarrow\star
)\\
& \simeq & \hofibre (\holim \diagX\stackrel{\bar{f}}{\longrightarrow}\holim \diagY).
\end{eqnarray*}
The result now follows from Proposition \ref{prop:holimproduct}.
\end{proof}


Later we will  use this result in the form: 
if $f:\diagX\rightarrow\diagY$ is a map of diagrams, the space $\holim\diagY$ is connected and  $\holim\hof(f)$ contractible, then the induced map 
$\bar{f}:\holim\diagX\rightarrow\holim\diagY$ is a homotopy
equivalence.

\paragraph{Notation for diagrams of spaces.} 
We introduce a convenient notation that we will use extensively.
We recall that space means {\em pointed } space and diagrams of spaces
take values in pointed spaces. A Boolean lattice $\bB$ will be represented by the
circle below left and a diagram 
$\diagX \colon\bB^\op \ra \ptspaces$ by
the pictogram below right:
$$

$ 
be a short exact sequence in  $\preshq$. Then,
\begin{align*}
&(i).\;\;\hofibre(
\holim 

)
$$
\end{firstdevice}


An arbitrary morphism of presheaves (not necessarily injective or
surjective) cannot be slotted into a short exact sequence. Our second
result, which follows from Proposition \ref{prop:hofibre} and the
discussion preceeding it along with Remark \ref{rem:connected}, gets around this by using
the homotopy theory in a more 
essential way:

\begin{seconddevice}
\label{device:2}
Let 
$
\begin{pspicture}(0,0)(0.8,0.5)
\rput(0.4,-0.35){\rput(0,0.5){$F$}\pscircle(0,0.5){0.35}
\pscircle[fillstyle=solid,fillcolor=white](0.185,0.765){0.075}}
\end{pspicture}
\ra
\begin{pspicture}(0,0)(0.8,0.5)
\rput(0.4,-0.35){\rput(0,0.5){$G$}\pscircle(0,0.5){0.35}
\pscircle[fillstyle=solid,fillcolor=white](0.185,0.765){0.075}}
\end{pspicture}
$ 
be a morphism in $\preshq$ with 
$
\begin{pspicture}(0,0)(0.8,0.5)
\rput(0.4,-0.35){
\pscircle[fillstyle=solid,fillcolor=lightergray](0,0.5){0.35}
\pscircle[fillstyle=solid,fillcolor=white](0.185,0.765){0.075}
\rput(0,0.5){$\scrH$}
}
\end{pspicture}
$
$
=\hof(
\begin{pspicture}(0,0)(0.8,0.5)
\rput(0.4,-0.35){
\pscircle[fillstyle=solid,fillcolor=lightergray](0,0.5){0.35}
\pscircle[fillstyle=solid,fillcolor=white](0.185,0.765){0.075}
\rput(0,0.5){$F$}
}
\end{pspicture}
\ra
\begin{pspicture}(0,0)(0.8,0.5)
\rput(0.4,-0.35){
\pscircle[fillstyle=solid,fillcolor=lightergray](0,0.5){0.35}
\pscircle[fillstyle=solid,fillcolor=white](0.185,0.765){0.075}
\rput(0,0.5){$G$}
}
\end{pspicture}
)
$.
If
$$
\holim\begin{pspicture}(0,0)(0.8,0.5)
\rput(-6.8,-1.1){
\rput(7.2,0.7){
\pscircle[fillstyle=solid,fillcolor=lightergray](0,0.5){0.35}
\pscircle[fillstyle=solid,fillcolor=white](0.185,0.765){0.075}
\rput(0,0.5){$\scrH$}
}}
\end{pspicture}
\quad
\text{(resp.}
\holim\begin{pspicture}(0,0)(0.8,0.5)
\rput(-6.8,-1.1){
\rput(7.2,0.7){
\pscircle[fillstyle=solid,fillcolor=lightergray](0,0.5){0.35}
\pscircle[fillstyle=solid,fillcolor=white](0.185,0.765){0.075}
\rput(0,0.5){$G$}
}}
\end{pspicture})
\quad
\text{is contractible}
$$
then 
$$
\holim 
\begin{pspicture}(0,0)(0.8,0.5)
\rput(-6.8,-1.1){
\rput(7.2,0.7){
\pscircle[fillstyle=solid,fillcolor=lightergray](0,0.5){0.35}
\pscircle[fillstyle=solid,fillcolor=white](0.185,0.765){0.075}
\rput(0,0.5){$F$}
}}
\end{pspicture}
\simeq
\holim 
\begin{pspicture}(0,0)(0.8,0.5)
\rput(-6.8,-1.1){
\rput(7.2,0.7){
\pscircle[fillstyle=solid,fillcolor=lightergray](0,0.5){0.35}
\pscircle[fillstyle=solid,fillcolor=white](0.185,0.765){0.075}
\rput(0,0.5){$G$}
}}
\end{pspicture}
\quad
(
\text{resp.}
\holim
\begin{pspicture}(0,0)(0.8,0.5)
\rput(-6.8,-1.1){
\rput(7.2,0.7){
\pscircle[fillstyle=solid,fillcolor=lightergray](0,0.5){0.35}
\pscircle[fillstyle=solid,fillcolor=white](0.185,0.765){0.075}
\rput(0,0.5){$\scrH$}
}}
\end{pspicture}
\simeq
\holim 
\begin{pspicture}(0,0)(0.8,0.5)
\rput(-6.8,-1.1){
\rput(7.2,0.7){
\pscircle[fillstyle=solid,fillcolor=lightergray](0,0.5){0.35}
\pscircle[fillstyle=solid,fillcolor=white](0.185,0.765){0.075}
\rput(0,0.5){$F$}
}}
\end{pspicture}
).
$$
\end{seconddevice}

\paragraph{Notation for the Khovanov presheaf of a link diagram.} 
We extend our notation to the specific case of the Khovanov 
presheaf $F_{KH}$  associated to a link diagram. 
It is convenient to have the link diagram  back in the picture:
given a link 
diagam $D$ we denote the associated Khovanov presheaf $F_{KH}$ and 
diagram of spaces $K(-,n)\circ F_{KH}$ by:
\begin{equation*}
  \label{eq:12}
\begin{pspicture}(0,0)(10,1)
\rput(3.5,0.5){
\rput(0,0){$D$}
\pscircle(0,0){0.45}
\pscircle[fillstyle=solid,fillcolor=white](0.260,0.340){0.090}
}
\rput(5.25,0.5){and}
\rput(7,0.5){
\pscircle[fillstyle=solid,fillcolor=lightergray](0,0){0.45}
\rput(0,0){$D$}
\pscircle[fillstyle=solid,fillcolor=white](0.260,0.340){0.090}
}
\end{pspicture}
\end{equation*}
If, as is often the case, we are interested in a link diagram with a
specified 
local piece we simply display it inside the circle. 
Thus for example
the unit map  $\iota \colon \bZ \ra V$, which is defined by $\iota(1) = 1$,
extends to an injective morphism of presheaves over
a Boolean lattice of appropriate rank which we denote by:
$$
\begin{pspicture}(0,0)(13,1)
\rput(-1,0){
%
\rput(6.25,0){
\pscircle[fillstyle=solid,fillcolor=white](0,0.5){0.45}
\psbezier[showpoints=false](-0.35,0.74)(-0.15,0.65)(-0.1,0.55)(-0.1,0.5)
\psbezier[showpoints=false](-0.35,0.26)(-0.15,0.35)(-0.1,0.45)(-0.1,0.5)
}
\psline{->}(7,0.5)(8,0.5)
\rput(7.5,0.7){$\scriptstyle{\iota}$}
\rput(8.75,0){
\pscircle[fillstyle=solid,fillcolor=white](0,0.5){0.45}
\psbezier[showpoints=false](-0.35,0.74)(-0.15,0.65)(-0.1,0.55)(-0.1,0.5)
\psbezier[showpoints=false](-0.35,0.26)(-0.15,0.35)(-0.1,0.45)(-0.1,0.5)
\pscircle(0.15,0.5){0.15}
}}
\end{pspicture}
$$
We have
link diagrams $D$ and $D'$ that are identical outside one part where
they differ by the local piece shown. On the left we have the presheaf
$F_{KH}\colon\bB^\op\ra\ab$, the cube for $D$, and on the right
$F_{KH}^\prime \colon\bB^\op\ra\ab$, the cube
for $D'$.
For $x\in\bB$ we have $F_{KH}^\prime (x) = F_{KH}(x) \otimes V$ and
the map $\iota_x\colon F_{KH}(x)=  F_{KH}(x)\otimes \Z \ra  F_{KH}(x)
\otimes V = F_{KH}^\prime (x) $ is the map $1\otimes \iota$.
These local
$\iota_x$ stitch together to form a morphism of presheaves
$\iota:F_{KH}\ra F_{KH}'$. This is what we mean by the picture
above.

Similarly the counit map $\epsilon \colon V \ra  \bZ$, defined by
$\epsilon(1) = 0, 
\epsilon(u) = 1$, extends to a surjective morphism of presheaves over $\bB$
$$
\begin{pspicture}(0,0)(13,1)
%
\rput(-1,0){
\rput(6.25,0){
\pscircle[fillstyle=solid,fillcolor=white](0,0.5){0.45}
\psbezier[showpoints=false](-0.35,0.74)(-0.15,0.65)(-0.1,0.55)(-0.1,0.5)
\psbezier[showpoints=false](-0.35,0.26)(-0.15,0.35)(-0.1,0.45)(-0.1,0.5)
\pscircle(0.15,0.5){0.15}
}
\psline{->}(7,0.5)(8,0.5)
\rput(7.5,0.7){$\scriptstyle{\epsilon}$}
\rput(8.75,0){
\pscircle[fillstyle=solid,fillcolor=white](0,0.5){0.45}
\psbezier[showpoints=false](-0.35,0.74)(-0.15,0.65)(-0.1,0.55)(-0.1,0.5)
\psbezier[showpoints=false](-0.35,0.26)(-0.15,0.35)(-0.1,0.45)(-0.1,0.5)
}}
\end{pspicture}
$$
and
there is a short exact sequence of presheaves
\begin{equation}\label{eq:unit}
\begin{pspicture}(0,0)(10,1)
%
\rput(-2.5,0){
\rput(5,0){
\pscircle[fillstyle=solid,fillcolor=white](0,0.5){0.45}
\psbezier[showpoints=false](-0.35,0.74)(-0.15,0.65)(-0.1,0.55)(-0.1,0.5)
\psbezier[showpoints=false](-0.35,0.26)(-0.15,0.35)(-0.1,0.45)(-0.1,0.5)
}
\psline{>->}(5.75,0.5)(6.75,0.5)
\rput(6.25,0.7){$\scriptstyle{\iota}$}
\rput(7.5,0){
\pscircle[fillstyle=solid,fillcolor=white](0,0.5){0.45}
\psbezier[showpoints=false](-0.35,0.74)(-0.15,0.65)(-0.1,0.55)(-0.1,0.5)
\psbezier[showpoints=false](-0.35,0.26)(-0.15,0.35)(-0.1,0.45)(-0.1,0.5)
\pscircle(0.15,0.5){0.15}
}
\psline{->>}(8.25,0.5)(9.25,0.5)
\rput(8.75,0.7){$\scriptstyle{\epsilon}$}
\rput(10,0){
\pscircle[fillstyle=solid,fillcolor=white](0,0.5){0.45}
\psbezier[showpoints=false](-0.35,0.74)(-0.15,0.65)(-0.1,0.55)(-0.1,0.5)
\psbezier[showpoints=false](-0.35,0.26)(-0.15,0.35)(-0.1,0.45)(-0.1,0.5)
}
}
\end{pspicture}
\end{equation}
The multiplication $m\colon V\otimes V\ra V$ is surjective
with $\ker(m)\cong V$, and this analysis similarly extends to give
a short exact sequence of presheaves
\begin{equation}\label{eq:mult}
\begin{pspicture}(0,0)(10,1)
%
\rput(-2.5,0){
\rput(5,0){
\pscircle[fillstyle=solid,fillcolor=white](0,0.5){0.45}
\psbezier[showpoints=false](-0.35,0.74)(-0.05,0.5)(0.2,0.85)(0.25,0.5)
\psbezier[showpoints=false](-0.35,0.26)(-0.05,0.5)(0.2,0.15)(0.25,0.5)
}
\psline{>->}(5.75,0.5)(6.75,0.5)
\rput(7.5,0){
\pscircle[fillstyle=solid,fillcolor=white](0,0.5){0.45}
\psbezier[showpoints=false](-0.35,0.74)(-0.15,0.65)(-0.1,0.55)(-0.1,0.5)
\psbezier[showpoints=false](-0.35,0.26)(-0.15,0.35)(-0.1,0.45)(-0.1,0.5)
\pscircle(0.15,0.5){0.15}
}
\psline{->>}(8.25,0.5)(9.25,0.5)
\rput(8.75,0.7){$\scriptstyle{m}$}
\rput(10,0){
\pscircle[fillstyle=solid,fillcolor=white](0,0.5){0.45}
\psbezier[showpoints=false](-0.35,0.74)(-0.05,0.5)(0.2,0.85)(0.25,0.5)
\psbezier[showpoints=false](-0.35,0.26)(-0.05,0.5)(0.2,0.15)(0.25,0.5)
}
}
\end{pspicture}
\end{equation}
The composition $m\circ \iota $ is the identity map.
Finally, the comultiplication $\Delta\colon V \ra V\otimes V$ is injective,
$\coker(\Delta)\cong V$, and this extends
to a short exact sequence of presheaves
\begin{equation}\label{eq:comult}
\begin{pspicture}(0,0)(10,1)
%
\rput(-2.5,0){
\rput(5,0){
\pscircle[fillstyle=solid,fillcolor=white](0,0.5){0.45}
\psbezier[showpoints=false](-0.35,0.74)(-0.05,0.5)(0.2,0.85)(0.25,0.5)
\psbezier[showpoints=false](-0.35,0.26)(-0.05,0.5)(0.2,0.15)(0.25,0.5)
}
\psline{>->}(5.75,0.5)(6.75,0.5)
\rput(6.25,0.7){$\scriptstyle{\Delta}$}
\rput(7.5,0){
\pscircle[fillstyle=solid,fillcolor=white](0,0.5){0.45}
\psbezier[showpoints=false](-0.35,0.74)(-0.15,0.65)(-0.1,0.55)(-0.1,0.5)
\psbezier[showpoints=false](-0.35,0.26)(-0.15,0.35)(-0.1,0.45)(-0.1,0.5)
\pscircle(0.15,0.5){0.15}
}
\psline{->>}(8.25,0.5)(9.25,0.5)
\rput(10,0){
\pscircle[fillstyle=solid,fillcolor=white](0,0.5){0.45}
\psbezier[showpoints=false](-0.35,0.74)(-0.05,0.5)(0.2,0.85)(0.25,0.5)
\psbezier[showpoints=false](-0.35,0.26)(-0.05,0.5)(0.2,0.15)(0.25,0.5)
}
}
\end{pspicture}
\end{equation}
The composition $ \epsilon \circ \Delta $ is the
identity map.

Occasionally the link
diagram $D$ will be too large for the circle notation above
(e.g: in \S\ref{section:applications:example}), and so we will
just write $D$, or a shaded version.
For example if $D_1,D_2$ are unoriented link diagrams then (\ref{eq:unit})
extends to
$$
\begin{pspicture}(0,0)(14.5,1)
%
\rput(0,-0.1){
\rput(2.5,0){
\rput(0,0.2){
\rput(0,0){
\psframe(0,0)(0.8,0.8)
\rput(0.4,0.4){$D_1$}
}
\rput(1.1,0.4){
\rput(0,0){
\psline(-0.3,0.24)(0.3,0.24)
}
\rput(0,0){
\psline(-0.3,-0.24)(0.3,-0.24)
}
}
\rput(1.4,0){
\psframe(0,0)(0.8,0.8)
\rput(0.425,0.4){$D_2$}
}}
}
%
%
\rput(5,0.2){
\psline{>->}(0,0.4)(0.9,0.4)
\rput(0.45,0.6){${\scriptstyle\iota}$}
}
%
\rput(6.15,0){
\rput(0,0.2){
\rput(0,0){
\psframe(0,0)(0.8,0.8)
\rput(0.4,0.4){$D_1$}
}
\rput(1.1,0.4){
\rput(0.05,0){
\psbezier[showpoints=false](-0.35,0.24)(-0.2,0.15)(-0.2,0.05)(-0.2,0)
\psbezier[showpoints=false](-0.35,-0.24)(-0.2,-0.15)(-0.2,-0.05)(-0.2,0)
}
\rput(-0.05,0){
\psbezier[showpoints=false](0.35,0.24)(0.2,0.15)(0.2,0.05)(0.2,0)
\psbezier[showpoints=false](0.35,-0.24)(0.2,-0.15)(0.2,-0.05)(0.2,0)
}
}
\rput(1.4,0){
\psframe(0,0)(0.8,0.8)
\rput(0.4,0.4){$D_2$}
}}
}
%
%
\rput(8.6,0.2){
\psline{->>}(0,0.4)(0.9,0.4)
\rput(0.45,0.6){${\scriptstyle\varepsilon}$}
}
%
\rput(9.8,0){
\rput(0,0.2){
\rput(0,0){
\psframe(0,0)(0.8,0.8)
\rput(0.4,0.4){$D_1$}
}
\rput(1.1,0.4){
\rput(0,0){
\psline(-0.3,0.24)(0.3,0.24)
}
\rput(0,0){
\psline(-0.3,-0.24)(0.3,-0.24)
}
}
\rput(1.4,0){
\psframe(0,0)(0.8,0.8)
\rput(0.4,0.4){$D_2$}
}}
}
%
}
\end{pspicture}
$$
and there are similar sequences for $m$ and $\Delta$.

\subsection{The skein relation}
\label{section:applications:skein}

By choosing a crossing there is an evident smoothing change morphism
of presheaves:
$$
\begin{pspicture}(0,0)(10,1)
%
\rput(-2.5,0){
\rput(6.25,0){
\pscircle[fillstyle=solid,fillcolor=white](0,0.5){0.45}
\pscircle[fillstyle=solid,fillcolor=white](0.260,0.840){0.090}
\psbezier[showpoints=false](-0.25,0.7)(-0.2,0.65)(-0.15,0.6)(0,0.6)
\psbezier[showpoints=false](0.25,0.7)(0.2,0.65)(0.15,0.6)(0,0.6)
\psbezier[showpoints=false](-0.25,0.3)(-0.2,0.35)(-0.15,0.4)(0,0.4)
\psbezier[showpoints=false](0.25,0.3)(0.2,0.35)(0.15,0.4)(0,0.4)
}
\psline{->}(7,0.5)(8,0.5)
\rput(7.5,0.7){$\scriptstyle{\varphi}$}
\rput(8.75,0){
\pscircle[fillstyle=solid,fillcolor=white](0,0.5){0.45}
\pscircle[fillstyle=solid,fillcolor=white](0.260,0.840){0.090}
\psbezier[showpoints=false](-0.2,0.75)(-0.15,0.7)(-0.1,0.65)(-0.1,0.5)
\psbezier[showpoints=false](-0.2,0.25)(-0.15,0.3)(-0.1,0.35)(-0.1,0.5)
\psbezier[showpoints=false](0.2,0.75)(0.15,0.7)(0.1,0.65)(0.1,0.5)
\psbezier[showpoints=false](0.2,0.25)(0.15,0.3)(0.1,0.35)(0.1,0.5)
}
}
\end{pspicture}
$$
In general this is neither surjective nor injective. There is however
an induced  map of spaces
$$
\begin{pspicture}(0,0)(10,1)
%
\rput(-2.5,0){
\rput(6.25,0){
\rput(-1,0.5){$\holim$}
\pscircle[fillstyle=solid,fillcolor=lightergray](0,0.5){0.45}
\pscircle[fillstyle=solid,fillcolor=white](0.260,0.840){0.090}
\psbezier[showpoints=false](-0.25,0.7)(-0.2,0.65)(-0.15,0.6)(0,0.6)
\psbezier[showpoints=false](0.25,0.7)(0.2,0.65)(0.15,0.6)(0,0.6)
\psbezier[showpoints=false](-0.25,0.3)(-0.2,0.35)(-0.15,0.4)(0,0.4)
\psbezier[showpoints=false](0.25,0.3)(0.2,0.35)(0.15,0.4)(0,0.4)
}
\psline{->}(7,0.5)(8,0.5)
\rput(7.5,0.7){$\scriptstyle{\ov{\varphi}}$}
\rput(9.6,0){
\rput(-1,0.5){$\holim$}
\pscircle[fillstyle=solid,fillcolor=lightergray](0,0.5){0.45}
\pscircle[fillstyle=solid,fillcolor=white](0.260,0.840){0.090}
\psbezier[showpoints=false](-0.2,0.75)(-0.15,0.7)(-0.1,0.65)(-0.1,0.5)
\psbezier[showpoints=false](-0.2,0.25)(-0.15,0.3)(-0.1,0.35)(-0.1,0.5)
\psbezier[showpoints=false](0.2,0.75)(0.15,0.7)(0.1,0.65)(0.1,0.5)
\psbezier[showpoints=false](0.2,0.25)(0.15,0.3)(0.1,0.35)(0.1,0.5)
}
}
\end{pspicture}
$$
and we can easily describe its homotopy fibre to give
a homotopy theoretic incarnation of the skein
relation:

\begin{proposition}\label{prop:skein}
\hspace{1em}
$\hofibre(
\holim 
\begin{pspicture}(0,0)(1,0.5)
%
\rput(0.5,-0.4){
\pscircle[fillstyle=solid,fillcolor=lightergray](0,0.5){0.45}
\pscircle[fillstyle=solid,fillcolor=white](0.260,0.840){0.090}
\psbezier[showpoints=false](-0.25,0.7)(-0.2,0.65)(-0.15,0.6)(0,0.6)
\psbezier[showpoints=false](0.25,0.7)(0.2,0.65)(0.15,0.6)(0,0.6)
\psbezier[showpoints=false](-0.25,0.3)(-0.2,0.35)(-0.15,0.4)(0,0.4)
\psbezier[showpoints=false](0.25,0.3)(0.2,0.35)(0.15,0.4)(0,0.4)
}
\end{pspicture}
\rightarrow
\holim 
\begin{pspicture}(0,0)(1,0.5)
\rput(0.5,-0.4){
\pscircle[fillstyle=solid,fillcolor=lightergray](0,0.5){0.45}
\pscircle[fillstyle=solid,fillcolor=white](0.260,0.840){0.090}
\psbezier[showpoints=false](-0.2,0.75)(-0.15,0.7)(-0.1,0.65)(-0.1,0.5)
\psbezier[showpoints=false](-0.2,0.25)(-0.15,0.3)(-0.1,0.35)(-0.1,0.5)
\psbezier[showpoints=false](0.2,0.75)(0.15,0.7)(0.1,0.65)(0.1,0.5)
\psbezier[showpoints=false](0.2,0.25)(0.15,0.3)(0.1,0.35)(0.1,0.5)
}
\end{pspicture}
)\simeq
\holim 
\begin{pspicture}(0,0)(1,0.5)
%
\rput(0.5,-0.4){
\pscircle[fillstyle=solid,fillcolor=lightergray](0,0.5){0.45}
\pscircle[fillstyle=solid,fillcolor=white](0.260,0.840){0.090}
\psline(0.2,0.75)(-0.2,0.25)
\psframe[fillstyle=solid,fillcolor=lightergray,linecolor=lightergray](-0.075,0.425)(0.075,0.575)
\psline(-0.2,0.75)(0.2,0.25)
}
\end{pspicture}
$
\end{proposition}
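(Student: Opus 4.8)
The plan is to realise the homotopy limit of the Khovanov presheaf of the crossing diagram as the homotopy fibre of the smoothing‑change map, by splitting the Khovanov cube at the distinguished crossing. Write $A$ for the set of crossings of $D$, let $c\in A$ be the chosen crossing, and let $D_0,D_1$ be the link diagrams obtained from $D$ by $0$‑ and $1$‑resolving $c$ (so each has crossing set $A\setminus\{c\}$). First I would recall, from the discussion of splittings in the Notation paragraph of Section \ref{section:holims}, how $\BQ_A$ decomposes: the subsets of $A$ not containing $c$, together with the adjoined element $\1'$, form a subposet isomorphic to $\BQ_{A\setminus\{c\}}$, while the subsets containing $c$ form a subposet isomorphic to $\bB_{A\setminus\{c\}}$. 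Unwinding the definitions of \S\ref{subsection:boolean} shows that restricting the Khovanov presheaf $F_{KH}$ of $D$ to the first subposet recovers the Khovanov presheaf $F_{KH}(D_0)$ of $D_0$ (the values at $\1'$ being $0$ on both sides), restricting to the second recovers $F_{KH}(D_1)$, and the edge maps of the cube in the $c$‑direction assemble into precisely the smoothing‑change morphism $\varphi\colon F_{KH}(D_0)\to F_{KH}(D_1)$ of \S\ref{section:applications:skein} (this is where one must check the \emph{direction}: the convention $x\prec y$ when $x$ is obtained from $y$ by adding a crossing forces $F_{KH}(x\prec y)$ to run from a value of $F_{KH}(D_0)$ to the corresponding value of $F_{KH}(D_1)$). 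Applying $K(-,n)$ throughout, this says that the diagram of spaces $K(-,n)\circ F_{KH}$ over $\BQop$ is exactly the stacked diagram on the left‑hand side of Proposition \ref{prop:hohoho}, with $K(-,n)\circ F_{KH}(D_0)$ on top (carrying the $\1'$‑vertex, at which it takes the value $K(0,n)=\star$), with $K(-,n)\circ F_{KH}(D_1)$ on the bottom, and with the induced map $K(-,n)\circ\varphi$ between them.

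Granting that identification, Proposition \ref{prop:hohoho} immediately gives
$$
\holim_{\BQop}K(-,n)\circ F_{KH}(D)\;\simeq\;\hofibre\!\left(\holim_{\bB^{\op}}K(-,n)\circ F_{KH}(D_0)\;\longrightarrow\;\holim_{\bB^{\op}}K(-,n)\circ F_{KH}(D_1)\right),
$$
the two homotopy limits on the right being taken over the unextended Boolean lattice $\bB_{A\setminus\{c\}}$. To put this in the form stated in the Proposition, whose pictograms denote homotopy limits over the \emph{extended} posets, I would invoke Lemma \ref{lem:holim1}: reinstating the $\1'$‑vertex with value $\star$ on each of $K(-,n)\circ F_{KH}(D_i)$ leaves the homotopy type of the homotopy limit unchanged. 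Since the equivalences of Lemma \ref{lem:holim1} are instances of the cofinality statement Proposition \ref{prop:holimcofinality}, which is natural in the diagram of spaces, the resulting square is homotopy commutative and the homotopy fibres of the two versions of $\ov{\varphi}$ agree. Hence
$$
\holim_{\BQop}K(-,n)\circ F_{KH}(D)\;\simeq\;\hofibre\!\left(\holim K(-,n)\circ F_{KH}(D_0)\;\stackrel{\ov{\varphi}}{\longrightarrow}\;\holim K(-,n)\circ F_{KH}(D_1)\right),
$$
which, on reading the three pictograms in the statement as $D_0$ (left), $D_1$ (middle), and the original crossing $D$ (right), is exactly the asserted equivalence.

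I expect essentially all of the difficulty to be concentrated in the first paragraph: correctly matching the split of the Khovanov cube with the shape appearing in Proposition \ref{prop:hohoho}, in particular locating $\1'$ in the $0$‑resolved half (so that half is $\BQ_{A\setminus\{c\}}$, not merely $\bB_{A\setminus\{c\}}$) and fixing the direction of $\varphi$. The remaining steps are direct citations of Proposition \ref{prop:hohoho} and Lemma \ref{lem:holim1}; Remark \ref{rem:connected} (the standing assumption that $n$ is large) is not logically needed here but keeps every space in sight connected.
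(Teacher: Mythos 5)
Your first paragraph is, in substance, the paper's entire proof: the paper simply records (in pictogram form) that the Khovanov diagram of $D$ over $\BQ_A$ \emph{is} the stacked diagram obtained by splitting at the chosen crossing, with the $0$-resolution half carrying $\1'$ and the $c$-direction edges assembling into $\varphi$, and then cites Proposition \ref{prop:hohoho} once. The flaw is in your second paragraph, where you misquote Proposition \ref{prop:hohoho}. Its right-hand side is already the homotopy fibre of $\ov{\varphi}$ between the homotopy limits over the \emph{extended} posets $\BQ_{A\setminus\{c\}}$ (both pictograms on that side carry the small circle), so Proposition \ref{prop:skein} follows in one step and no passage from unextended to extended homotopy limits is needed.

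As written, your intermediate statement --- that $\holim_{\BQop}\scrF_n(D)$ is the homotopy fibre of the map between the $\holim$'s taken over the unextended lattice $\bB_{A\setminus\{c\}}$ --- is false, and the repair you propose does not work: Lemma \ref{lem:holim1} compares two \emph{extended} homotopy limits (a rank $r+1$ stacked diagram whose bottom half is the trivial diagram, against the rank $r$ extended diagram); it does not say that adjoining $\1'$ with value $\star$ to a diagram over $\bB^{\op}$ leaves the homotopy limit unchanged. That claim fails badly: $\bB^{\op}$ has an initial object $\1$, so $\holim_{\bB^{\op}}\scrF_n(D_i)\simeq\scrF_n(D_i)(\1)$ is a single Eilenberg--Mac\,Lane space attached to the all-$0$ resolution, whereas the extended homotopy limit carries all of $\ov{KH}^{*}(D_i)$ in its homotopy groups (Proposition \ref{prop:homotopygroups}); the difference between $\bB$ and $\BQ$ is exactly the point of the Remark following Theorem \ref{thm:invlim}. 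Once you apply Proposition \ref{prop:hohoho} as actually stated, your second paragraph becomes unnecessary and the argument coincides with the paper's.
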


\begin{proof}
We have
$$
\holim 
\begin{pspicture}(0,0)(1,0.5)
\rput(0.5,-0.4){
\pscircle[fillstyle=solid,fillcolor=lightergray](0,0.5){0.45}
\pscircle[fillstyle=solid,fillcolor=white](0.260,0.840){0.090}
\psline(0.2,0.75)(-0.2,0.25)
\psframe[fillstyle=solid,fillcolor=lightergray,linecolor=lightergray](-0.075,0.425)(0.075,0.575)
\psline(-0.2,0.75)(0.2,0.25)
}
\end{pspicture}
\simeq
\holim\left(
\begin{pspicture}(0,0)(1.2,1.3)
\rput(0.6,-1.1){
\psline(0,2)(0,0.5)
\rput(0,2){
\pscircle[fillstyle=solid,fillcolor=lightergray](0,0){0.45}
\pscircle[fillstyle=solid,fillcolor=white](0.260,0.340){0.090}
\rput(0,-0.5){
\psbezier[showpoints=false](-0.25,0.7)(-0.2,0.65)(-0.15,0.6)(0,0.6)
\psbezier[showpoints=false](0.25,0.7)(0.2,0.65)(0.15,0.6)(0,0.6)
\psbezier[showpoints=false](-0.25,0.3)(-0.2,0.35)(-0.15,0.4)(0,0.4)
\psbezier[showpoints=false](0.25,0.3)(0.2,0.35)(0.15,0.4)(0,0.4)
}
}
\rput(0.2,1.25){${\scriptstyle\varphi}$}
\rput(0,0.5){
\pscircle[fillstyle=solid,fillcolor=lightergray](0,0){0.45}
\rput(0,-0.5){
\psbezier[showpoints=false](-0.2,0.75)(-0.15,0.7)(-0.1,0.65)(-0.1,0.5)
\psbezier[showpoints=false](-0.2,0.25)(-0.15,0.3)(-0.1,0.35)(-0.1,0.5)
\psbezier[showpoints=false](0.2,0.75)(0.15,0.7)(0.1,0.65)(0.1,0.5)
\psbezier[showpoints=false](0.2,0.25)(0.15,0.3)(0.1,0.35)(0.1,0.5)
}}}
\end{pspicture}
\right)
$$
and the result follows immediately from Proposition \ref{prop:hohoho}.
\end{proof}

The associated long exact homotopy sequence can  be
identified with the usual 
long exact skein sequence in Khovanov homology (see \cite{Viro02} and \cite{Turner06}).

\subsection{Reidemeister invariance.}
\label{section:applications:reidemeister}

We now give a homotopy theoretic proof of the invariance of
Khovanov homology by
Reidemeister moves (Figure \ref{fig:reidemeister}). The original
proofs can be found in \cite{Khovanov00} (see also \cite{Bar-Natan02})
and more a geometrical argument can be found in \cite{Bar-Natan05}. We
recall that the rank of the underlying Boolean lattice is the number
of crossings in the given diagram, thus moves $(I\pm)$ and $(II)$ alter the underlying
Boolean lattice.

Recalling from the remarks
at the end of Section \ref{sec:spaces} that a negative degree shift in
Khovanov homology is equivalent to taking the loop space, we see that
we must prove
$(I+). \YYn D  \simeq \YYn D^\prime$,
$(I-). \YYn D  \simeq \Omega \YYn {D^\prime}$,
$(II). \YYn D  \simeq \Omega \YYn {D^\prime}$ and
$(III). \YYn D  \simeq \YYn D^\prime$.

\subsubsection{Reidemeister moves $(I\pm)$.}

Let $D$ and $D^\prime$ be two (unoriented)  link diagrams locally
described as in $(I-)$ above. The short exact sequence (\ref{eq:comult}) and
Corollary \ref{cor:holimhofibre}(ii) give
\begin{align*}
\YYn{D} =
\holim\left(


%
%
%
\bibliography{AGT_style_version}{}

\begin{thebibliography}{}
\providecommand\bibmarginpar{\leavevmode\marginpar}
\def\urlstyle#1{{\tt #1}}

\bibitem{Bar-Natan02}
\textbf{D Bar-Natan}, \emph{On {K}hovanov's categorification of the {J}ones
  polynomial}, Algebr. Geom. Topol. 2 (2002) 337--370 (electronic)

\bibitem{Bar-Natan05}
\textbf{D Bar-Natan}, \emph{Khovanov's homology for tangles and cobordisms},
  Geom. Topol. 9 (2005) 1443--1499

\bibitem{BousfieldKan}
\textbf{A\,K Bousfield}, \textbf{D\,M Kan}, \emph{Homotopy limits, completions
  and localizations}, Lecture Notes in Mathematics, Vol. 304, Springer-Verlag,
  Berlin (1972)

\bibitem{Curtis71}
\textbf{E\,B Curtis}, \emph{Simplicial homotopy theory}, Advances in Math. 6
  (1971) 107--209 (1971)

\bibitem{EverittLipshitzSarkarTurner}
\textbf{B Everitt}, \textbf{R Lipshitz}, \textbf{S Sakar}, \textbf{P Turner},
  \emph{Khovanov homotopy types and the Dold-Thom functor}, {\tt
  arXiv:1202.1856}

\bibitem{EverittTurner1}
\textbf{B Everitt}, \textbf{P Turner}, \emph{Homology of coloured posets: a
  generalisation of {K}hovanov's cube construction}, J. Algebra 322 (2009)
  429--448

\bibitem{EverittTurner2}
\textbf{B Everitt}, \textbf{P Turner}, \emph{Bundles of coloured posets and a
  {L}eray-{S}erre spectral sequence for {K}hovanov homology}, Trans. Amer.
  Math. Soc. 364 (2012) 3137--3158

\bibitem{Goodwillie92}
\textbf{T\,G Goodwillie}, \emph{Calculus. {II}. {A}nalytic functors},
  $K$-Theory 5 (1991/92) 295--332

\bibitem{Hatcher}
\textbf{A Hatcher}, \emph{Algebraic topology}, Cambridge University Press,
  Cambridge (2002)

\bibitem{Khovanov00}
\textbf{M Khovanov}, \emph{A categorification of the {J}ones polynomial}, Duke
  Math. J. 101 (2000) 359--426

\bibitem{khovanov06}
\textbf{M Khovanov}, \emph{Link homology and categorification}, from:
  ``International {C}ongress of {M}athematicians. {V}ol. {II}'', Eur. Math.
  Soc., Z\"urich (2006)  989--999

\bibitem{LipshitzSarkar}
\textbf{R Lipshitz}, \textbf{S Sakar}, \emph{A Khovanov homotopy type}, {\tt
  arXiv:1112.3932}

\bibitem{Massey91}
\textbf{W\,S Massey}, \emph{A basic course in algebraic topology}, volume 127
  of \emph{Graduate Texts in Mathematics}, Springer-Verlag, New York (1991)

\bibitem{May}
\textbf{J\,P May}, \emph{Simplicial objects in algebraic topology}, Van
  Nostrand Mathematical Studies, No. 11, D. Van Nostrand Co., Inc., Princeton,
  N.J.-Toronto, Ont.-London (1967)

\bibitem{Mitchell72}
\textbf{B Mitchell}, \emph{Rings with several objects}, Advances in Math. 8
  (1972) 1--161

\bibitem{Moerdijk95}
\textbf{I Moerdijk}, \emph{Classifying spaces and classifying topoi}, volume
  1616 of \emph{Lecture Notes in Mathematics}, Springer-Verlag, Berlin (1995)

\bibitem{Turner06}
\textbf{P Turner}, \emph{Five Lectures on Khovanov Homology}, {\tt
  arXiv:math.GT/060646}

\bibitem{Viro02}
\textbf{O Viro}, \emph{Khovanov homology, its definitions and ramifications},
  Fund. Math. 184 (2004) 317--342

\bibitem{Weibel94}
\textbf{C\,A Weibel}, \emph{An introduction to homological algebra}, volume~38
  of \emph{Cambridge Studies in Advanced Mathematics}, Cambridge University
  Press, Cambridge (1994)

\end{thebibliography}
\bibliographystyle{gtart}

\end{document}